\numberwithin{equation}{section}
\theoremstyle{plain}
\newtheorem{thm}{Theorem}[section]
\newtheorem{lem}[thm]{Lemma}
\newtheorem{prop}[thm]{Proposition}
\newtheorem{cor}[thm]{Corollary}
\newtheorem*{thm*}{Theorem}
\newtheorem*{lem*}{Lemma}
\newtheorem*{prop*}{Proposition}
\newtheorem*{cor*}{Corollary}
\theoremstyle{definition}
\newtheorem{defn}[thm]{Definition}
\newtheorem*{defn*}{Definition}
\newtheorem{ex}[thm]{Example}
{}
\newtheorem{rem}[thm]{Remark}
\newtheorem*{rem*}{Remark}
\newtheorem*{ack}{Acknowledgements}{}
\theoremstyle{remark}
{}
{}
{}
\def\cie{\subseteq}
\def\iso{\cong}
\def\Un{\bigcup}
\def\intersec{\cap}
\def\to{\longrightarrow}
\def\ZZ{\mathbb{Z}}
\def\S{\Sigma}
\def\sfD{\mathsf{D}}
\def\mcK{\mathcal{K}}
\def\mcT{\mathcal{T}}
\def\mcS{\mathcal{S}}
\def\mcI{\mathcal{I}}
\def\mcP{\mathcal{P}}
\def\mcM{\mathcal{M}}
\def\mcL{\mathcal{L}}
\DeclareMathOperator{\Spec}{Spec}
\DeclareMathOperator{\Proj}{Proj}
\DeclareMathOperator{\Spc}{Spc}
\DeclareMathOperator{\supp}{supp}
\DeclareMathOperator{\coker}{coker}
\DeclareMathOperator{\id}{id}
\DeclareMathOperator{\im}{im}
\DeclareMathOperator{\Hom}{Hom}
\DeclareMathOperator{\Ext}{Ext}
\DeclareMathOperator{\cone}{cone}
\DeclareMathOperator{\modu}{\mathsf{mod}}
\DeclareMathOperator{\Modu}{\mathsf{Mod}}
\DeclareMathOperator{\bik}{\mathsf{rel}}
\DeclareMathOperator{\BIK}{\mathsf{Rel}}
\DeclareMathOperator{\stmod}{\mathsf{strel}}
\DeclareMathOperator{\Stmod}{\mathsf{StRel}}
\DeclareMathOperator{\stmodu}{\mathsf{stmod}}
\DeclareMathOperator{\stbik}{\mathsf{strel}}
\DeclareMathOperator{\stBIK}{\mathsf{{StRel}}}
\DeclareMathOperator{\ob}{Ob}
\DeclareMathOperator{\GProj}{GProj}
\DeclareMathOperator{\GInj}{GInj}
\DeclareMathOperator{\proj}{proj}
\DeclareMathOperator{\pdim}{pd}
\DeclareMathOperator{\thick}{thick}
\DeclareMathOperator{\Thick}{\mathrm{Thick}}
\DeclareMathOperator{\Gproj}{\mathrm{Gproj}}
\DeclareMathOperator{\sGproj}{\underline{\mathrm{Gproj}}}
\title[Relative stable categories versus singularity categories]{Comparisons between singularity categories and relative stable categories of finite groups}
\author{Shawn Baland}
\address{Shawn Baland, Department of Mathematics, University of Washington,
Box 354350, Seattle, WA 98195}
\email{sbaland@math.washington.edu}
\author{Greg Stevenson}
\address{Greg Stevenson, Universit\"at Bielefeld, Fakult\"at f\"ur Mathematik, BIREP Gruppe, Postfach 10\,01\,31, 33501 Bielefeld, Germany.}
\email{gstevens@math.uni-bielefeld.de}
\subjclass[2010]{20J06, 16G30, 16E35, 18E30}
\begin{document}

\begin{abstract}
\noindent We consider the relationship between the relative stable category of Benson, Iyengar, and Krause and the usual singularity category for group algebras with coefficients in a commutative noetherian ring. When the coefficient ring is self-injective we show that these categories share a common, relatively large, Verdier quotient. At the other extreme, when the coefficient ring has finite global dimension, there is a semi-orthogonal decomposition, due to Poulton, relating the two categories. We prove that this decomposition is partially compatible with the monoidal structure and study the morphism it induces on spectra.
\end{abstract}

\maketitle

\section{Introduction}
Representations of finite groups are prevalent in mathematics, occurring as important objects of study and providing connections between many areas. Although there are still a great deal of open and challenging problems, the modular representation theory of finite groups over fields is becoming increasingly well understood. In particular, given a finite group $G$ and a field $k$ of characteristic dividing the order of $G$ there has been great progress in understanding the features of $\stmodu kG$, the stable category of finite dimensional representations of $G$. Much of this progress is owed to the existence of a monoidal structure on $\stmodu kG$ which arises from the Hopf algebra structure on $kG$.

On the other hand, the situation is much less satisfactory if one considers representations of $G$ over some general commutative noetherian ring $R$. One immediate obstruction is that there is, in general, no obvious monoidal candidate to replace the stable category. One can still consider the bounded derived category of $\modu RG$, and in some cases this is even equipped with a closed symmetric monoidal structure, but it is convenient to be able to work modulo objects that do not contain interesting modular representation theoretic information. One option is to work with the singularity category of $RG$ but there are reasons to suspect that this might be too brutal a quotient of the derived category.

In \cite{BIK8} Benson, Iyengar, and Krause introduce a version of the stable category of $RG$, denoted here by $\stbik RG$, which retains some information on the modules of finite projective dimension, generalises the construction for field coefficients, and carries a closed symmetric monoidal structure; this last property is important and non-trivial as the tensor product afforded by the Hopf algebra structure on $RG$ is no longer necessarily exact for general coefficient rings. The aim of this paper is to compare, for two classes of coefficient rings, the relative stable category $\stbik RG$ to the singularity category of $RG$. This serves to relate the invariant introduced in \cite{BIK8} to a better understood and more classical one, as well as to shed some light on exactly how $\stbik RG$ extends the singularity category.

The first obvious case to consider is when one takes as coefficient ring a self-injective ring $R$. In this case the group algebra $RG$ is, as in the case for fields, self-injective and so one can consider its usual stable category $\stmodu RG$. In this setting we prove, in Section~\ref{sec_selfinjective}, the following theorem showing that one can compare, and hope to transfer information, between these two categories.

\begin{thm*}[\ref{thm_comparison}]
Let $R$ be a commutative zero dimensional Gorenstein ring, $G$ a finite group, and $\iota\colon R\to RG$ the canonical inclusion. There is an equivalence of triangulated categories
\begin{displaymath}
\phi \colon (\stbik RG)/\mcK' \to (\stmodu RG)/\mcK,
\end{displaymath}
where
\begin{displaymath}
\mcK = \thick_{\stmodu RG}(\iota^*M \; \vert \; M\in \modu R)
\end{displaymath}
and $\mcK'$ is the full subcategory of $\stbik RG$ with the same objects as $\mcK$.
\end{thm*}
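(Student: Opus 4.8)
The plan is to exhibit both categories in the statement as Verdier quotients of the bounded derived category $\sfD^b(\modu RG)$ by one and the same thick subcategory, and to take $\phi$ to be the resulting comparison. The two structural inputs are: a commutative zero-dimensional Gorenstein ring is self-injective, and $\iota\colon R\to RG$ is a Frobenius extension ($RG$ is finitely generated free over $R$ and $\Hom_R(RG,R)\cong RG$ as $RG$-bimodules), so that $RG$ is again self-injective. By Buchweitz's theorem the canonical functor $\sfD^b(\modu RG)\to\stmodu RG$ is then a Verdier localization with kernel $\mathrm{perf}(RG)=\thick_{\sfD^b(\modu RG)}(RG)$. Writing $\mcL:=\thick_{\sfD^b(\modu RG)}(\iota^*M\mid M\in\modu R)$, the image of $\mcL$ under this localization is exactly $\mcK$, and since $\iota^*R=RG$ we have $\mathrm{perf}(RG)\subseteq\mcL$; so the composite-quotient formula $(\mcT/\mcA)/(\mcB/\mcA)\simeq\mcT/\mcB$ for Verdier localizations yields an equivalence $(\stmodu RG)/\mcK\simeq\sfD^b(\modu RG)/\mcL$.

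For the other side I would invoke the construction of \cite{BIK8}: there is a localization functor $\sfD^b(\modu RG)\to\stbik RG$, which is in particular the identity on objects, with kernel a thick subcategory $\sfN$ consisting of the complexes that are ``trivial relative to $R$''. Because $\sfD^b(\modu RG)\to\stbik RG$ and $\sfD^b(\modu RG)\to\stmodu RG$ agree on objects, the subcategory $\mcK'$ of the statement is precisely the image of $\mcL$ in $\stbik RG$, whence $\stbik RG/\mcK'\simeq\sfD^b(\modu RG)/\thick(\sfN\cup\mcL)$. Everything thus comes down to the inclusion $\sfN\subseteq\mcL$: any bounded complex of finitely generated $RG$-modules that becomes zero in $\stbik RG$ must already lie in the thick subcategory generated by the induced modules $\iota^*M$. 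Granting this, $\thick(\sfN\cup\mcL)=\mcL$, and combining with the previous paragraph produces the triangulated equivalence $\phi$, which up to the two canonical identifications is the identity on objects.

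The heart of the matter, and the step I expect to be hardest, is the inclusion $\sfN\subseteq\mcL$. I would prove it by d\'evissage: it suffices to treat a generating set for $\sfN$, and each generator (an induced perfect complex, or the total complex of a bounded $R$-split exact sequence of $RG$-modules) should be shown to admit, inside $\sfD^b(\modu RG)$, a finite filtration by induced modules up to $\mcL$. The tool is the Frobenius extension $\iota$: every $RG$-module $X$ sits in $R$-split exact sequences $0\to X\to\iota^*(\res X)\to X'\to 0$ and $0\to X''\to\iota^*(\res X)\to X\to 0$, and splicing these together with the exactness of $RG\otimes_R-$ lets one replace such $X$ by complexes of induced modules modulo $\mcL$. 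A minor bookkeeping point to verify alongside is that $\mcK'$, defined as the full subcategory of $\stbik RG$ on the objects of $\mcK$, is genuinely thick and really does coincide with the image of $\mcL$; this is immediate once one knows that $\mcL$ is the full preimage of $\mcK$ under $\sfD^b(\modu RG)\to\stmodu RG$.
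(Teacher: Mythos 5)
Your plan hinges on presenting \emph{both} categories as Verdier quotients of the single triangulated category $\sfD^b(\modu RG)$, but the claimed localization functor $\sfD^b(\modu RG)\to\stbik RG$ does not exist, and I do not believe it appears in \cite{BIK8}. The triangulated category $\stbik RG$ is the stable category of the Frobenius category $\bik RG$, whose exact structure (the $R$-split one) is strictly coarser than the abelian one on $\modu RG$. Consequently the natural Buchweitz--Keller--Vossieck--type comparison goes from $\sfD^b(\bik RG)$ to $\stbik RG$, while one has a further localization $\sfD^b(\bik RG)\to\sfD^b(\modu RG)$ (since every $R$-split-acyclic complex is acyclic, but not conversely). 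There is no functor $\sfD^b(\modu RG)\to\stbik RG$ compatible with the triangulations: such a functor would have to send every short exact sequence of $RG$-modules to a triangle in $\stbik RG$, but the triangles in $\stbik RG$ arise precisely from $R$-split sequences, and a non-$R$-split extension represents a class of $\Ext^1_{RG}$ that is not in the image of the relative $\Ext^1$, so the abelian cone and the relative cone can genuinely differ.

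Even granting the functor, your conclusion $\sfN\subseteq\mcL$ cannot be right in the way you need it, because the weakly projective modules are killed by the (hypothetical) functor, forcing $\mcL\subseteq\sfN$; combined with your inclusion this would give $\sfN=\mcL$ and therefore $\stbik RG\simeq\sfD^b(\modu RG)/\mcL\simeq(\stmodu RG)/\mcK$, i.e.\ $\mcK'=0$. That is a strictly stronger statement than the theorem, and it is not true in general: $\mcK$ is the \emph{thick} closure of the induced modules in $\stmodu RG$, whereas the weakly projectives form only an additive subcategory, so $\mcK$ typically contains modules that are not weakly projective and hence nonzero in $\stbik RG$. This is exactly why the paper goes to the trouble of proving $\mcK'$ is a thick subcategory of $\stbik RG$. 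The paper instead proceeds at the level of the module categories: it builds $\underline{\psi}\colon\stbik RG\to(\stmodu RG)/\mcK$ directly from the identity functor $\bik RG\to\modu RG$, checks it is exact by comparing relative and absolute cosyzygies, proves a cone-comparison lemma that shows the stable cone and the relative cone agree modulo $\mcK$, and then verifies fullness and faithfulness by hand on roofs in the Verdier quotients. If you want a derived-category argument in your spirit, you would need to work from $\sfD^b(\bik RG)$ as the common source and carefully compare the two kernels there; that is a genuinely different (and more delicate) bookkeeping exercise than what you have written.
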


As an application we are able to glean some information on the lattice of thick subcategories of $\stbik RG$. This complements the results of \cite{BCS} on the lattice of thick tensor ideals.

In the second part of the paper we consider the case where the coefficient ring $R$ is regular. In this case the singularity category $\sfD_\mathrm{Sg}(RG)$ is a tensor triangulated category and one would morally expect that $\stbik RG$ interpolates between the bounded derived category and the singularity category. In fact this is the case as the following theorem, which is originally due to Poulton \cite{Poulton}, makes precise.

\begin{thm*}[\ref{thm_adjoint}, \ref{cor_adjoint}]
Let $R$ be a commutative noetherian ring of finite global dimension and $G$ a finite group. There is a semi-orthogonal decomposition
\begin{displaymath}
\xymatrix{
\sfD_\mathrm{Sg}(RG) \ar[r]<0.75ex>^-{\underline{\phi}} \ar@{<-}[r]<-0.75ex>_-{\underline{\psi}} & \stmod RG \ar[r]<0.75ex>^-{j^*} \ar@{<-}[r]<-0.75ex>_-{j_*}& \stmod_\mathrm{fpd} RG
}
\end{displaymath}
where $\stmod_\mathrm{fpd}RG$ is the full subcategory consisting of finitely generated modules of finite projective dimension over $RG$.
\end{thm*}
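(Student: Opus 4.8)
The plan is to first upgrade the hypothesis: a commutative noetherian ring of finite global dimension $d$ is regular of Krull dimension $d$, and since $RG$ is a symmetric $R$-algebra which is finitely generated and free over $R$, it follows that $RG$ is Iwanaga--Gorenstein. By Buchweitz's theorem this gives an equivalence $\sfD_{\mathrm{Sg}}(RG)\simeq\sGproj(RG)$ with the stable category of finitely generated Gorenstein-projective $RG$-modules, and I would take $\underline\phi$ to be this equivalence followed by a triangulated embedding $\sGproj(RG)\hookrightarrow\stmod RG$ constructed as follows.

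The key lemma is that a finitely generated Gorenstein-projective $RG$-module $X$ is projective over $R$: writing $X$ as a syzygy of a complete resolution by finitely generated projective --- hence $R$-projective --- $RG$-modules, $X$ occurs, along this complex, as the $d$-th $R$-syzygy of another of its syzygies, and a $d$-th syzygy over $R$ is $R$-projective since $\gldim R=d$. Granting this, any short exact sequence of $RG$-modules whose third term is Gorenstein-projective is split over $R$; in particular the syzygy sequences defining the shift on $\sGproj(RG)$ are $R$-split, so the canonical functor $\modu RG\to\stmod RG$ restricts to the triangulated functor underlying $\underline\phi$. For full faithfulness I would use that the relatively projective $RG$-modules --- the summands of the $\iota^{*}M = RG\otimes_{R}M$, $M\in\modu R$ --- have finite projective dimension over $RG$ because $\gldim R<\infty$; dimension-shifting then yields $\Ext^{>0}_{RG}(X,N)=0$ for $X$ Gorenstein-projective and $N$ of finite projective dimension, whence a homomorphism out of a Gorenstein-projective module that factors through a relatively projective module in fact factors through a genuine projective, and so $\underline{\Hom}_{RG}$ agrees with $\Hom_{\stmod RG}$ on Gorenstein-projectives.

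The same $\Ext$-vanishing shows $\Hom_{\stmod RG}(\underline\phi X, N)=0$ for $X$ Gorenstein-projective and $N$ of finite projective dimension, which is the semi-orthogonality needed; and $\stmod_{\mathrm{fpd}}RG$ is a thick subcategory of $\stmod RG$ since finite projective dimension is stable under relative (co)syzygies, cones and summands. For the decomposition triangles I would invoke Auslander--Buchweitz approximation over the Gorenstein ring $RG$ (in the finitely generated setting, as $RG$ is noetherian): every $M\in\modu RG$ fits in a short exact sequence $0\to M\to Y\to X\to 0$ with $\pdim_{RG}Y<\infty$ and $X$ Gorenstein-projective. By the key lemma this is $R$-split, hence a triangle in $\stmod RG$, and rotating it displays $M$ in a triangle with one vertex in $\im\underline\phi$ and the other in $\stmod_{\mathrm{fpd}}RG$. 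Combined with the semi-orthogonality this is exactly the data of the stated decomposition: the usual formalism of semi-orthogonal decompositions then produces the right adjoint $\underline\psi$ of $\underline\phi$, the left adjoint $j^{*}$ of the inclusion $j_{*}$, the identification $\im\underline\phi={}^{\perp}(\stmod_{\mathrm{fpd}}RG)$, and the equivalence $\stmod RG/\stmod_{\mathrm{fpd}}RG\simeq\sfD_{\mathrm{Sg}}(RG)$. Finally one checks that, under $\sfD_{\mathrm{Sg}}(RG)\simeq\sGproj(RG)$, the functor $\underline\psi$ is induced by the composite $\modu RG\to\sfD^{b}(\modu RG)\to\sfD_{\mathrm{Sg}}(RG)$, which descends to $\stmod RG$ because relatively projective modules are perfect.

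I expect the main obstacle to be making $\underline\phi$ well defined and fully faithful: this is where one must pin down which short exact sequences of $RG$-modules are split over $R$, reconcile relatively projective modules with genuinely projective ones, and hence identify $\underline{\Hom}_{RG}$ with morphisms in the relative stable category --- which carries the actual content of the comparison. One must also take some care to carry out the Auslander--Buchweitz approximations within finitely generated modules and to track idempotent completions when comparing $\sfD_{\mathrm{Sg}}(RG)$, $\sGproj(RG)$ and the perpendicular subcategory of $\stmod RG$.
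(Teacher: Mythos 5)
Your proof is correct, and the overall architecture matches the paper's: identify $\sfD_{\mathrm{Sg}}(RG)$ with $\sGproj RG$ via Buchweitz, show the inclusion $\Gproj RG\to\bik RG$ is exact and projective-preserving so that it descends to a fully faithful $\underline\phi$, establish the orthogonality $\Hom_{\stmod RG}(\underline\phi X,N)=0$ for $X$ Gorenstein projective and $N$ of finite projective dimension, produce approximation triangles, and invoke the standard formalism. The substantive difference is in the source of the approximation triangles. The paper starts from the Auslander--Buchweitz precover $0\to L\to A\to M\to 0$ with $A$ Gorenstein projective and $\pdim_{RG}L<\infty$; since this sequence need not be $R$-split, the paper replaces $A$ by $A\oplus\iota^{*}\iota_{*}M$ and shows the new kernel still has finite projective dimension (Lemma~\ref{lem_gproj_splitcover}). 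You instead start from the dual Auslander--Buchweitz hull $0\to M\to Y\to X\to 0$ with $\pdim_{RG}Y<\infty$ and $X$ Gorenstein projective; because the third term $X$ is $R$-projective (your key lemma), this sequence is automatically $R$-split, so no repair is needed and $\Sigma^{-1}X\to M\to Y$ is already a triangle of the required shape in $\stmod RG$. This is a genuine, if modest, simplification: it eliminates the need for the paper's Lemma~\ref{lem_gproj_splitcover}. Two smaller deviations are also sound: you prove that $\iota_{*}$ carries Gorenstein projectives to $R$-projectives by a syzygy count along a complete resolution (a $d$-th $R$-syzygy of an $R$-module is projective when $\gldim R=d$), whereas the paper argues via the adjunction isomorphism $\Ext^{i}_{RG}(A,\iota^{*}M)\cong\Ext^{i}_{R}(\iota_{*}A,M)$; and you deduce full faithfulness of $\underline\phi$ from the $\Ext$-vanishing together with dimension shifting, whereas the paper uses the characterisation of maps factoring through weakly projectives via the counit $\iota^{*}\iota_{*}B\to B$. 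Both routes prove the same thing; the paper's $\Ext$ argument has the advantage of working uniformly for Gorenstein injectives as well, while yours is more self-contained. Your cautions about idempotent completion are appropriate but ultimately harmless here, since $\sGproj RG$, $\sfD_{\mathrm{Sg}}(RG)$ and $\stmod RG$ are all idempotent complete and a fully faithful exact functor between idempotent complete categories has thick essential image.
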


We give a proof of this theorem and explore its compatibility with the monoidal structures on $\sfD_\mathrm{Sg}(RG)$ and $\stmod RG$. We show the adjunction $\underline{\phi} \dashv \underline{\psi}$ satisfies the projection formula and that the morphism of spectra
\begin{displaymath}
\Spc \stmod RG \to \Spc \sfD_\mathrm{Sg}(RG)
\end{displaymath}
induced by $\underline{\phi}$ is dense.

The structure of the paper is straightforward. In the first section we recall from \cite{BIK8} the construction of the relative stable category as well as some facts we will need from Gorenstein homological algebra. In Section~\ref{sec_selfinjective} we prove the first theorem and give some applications to lattices of thick subcategories. Section~\ref{sec_regular} is devoted to the proof of the second theorem and its consequences. Finally, there is an appendix containing a short tensor triangular geometry argument which gives a necessary and sufficient condition for a monoidal functor to induce a dense map on spectra.

\begin{ack}
We are grateful to Andrew Poulton for making us aware of his article \cite{Poulton}, which contains a proof of Theorem~\ref{thm_adjoint}. 
\end{ack}

\section{Preliminaries}\label{sec_prelims}
We provide here some recollections on relative stable categories, as introduced in \cite{BIK8}, and Gorenstein projective modules that will be needed throughout the paper. Before delving into some details on these subjects let us fix some notation. Throughout $R$ will denote a commutative noetherian ring and $G$ will denote a finite group. We will use $\iota$ to denote the canonical inclusion of $R$ into the group algebra $RG$. We recall that $RG$ is a symmetric $R$-algebra. In particular, this means that the extension of scalars functor $\iota^*$ is isomorphic to the coextension of scalars functor $i^!$. The group algebra $RG$ also carries the structure of a cocommutative Hopf algebra endowing $\modu RG$, the category of finitely generated $RG$-modules, with a closed symmetric monoidal structure.

\subsection{Relative stable categories}
In this section we introduce the relative stable category of $RG$ as defined in \cite{BIK8}. We denote, as usual, the abelian category of all $RG$-modules by $\Modu RG$. We say that a short exact sequence
\begin{displaymath}
0\to M' \to M \to M'' \to 0
\end{displaymath}
is $R$\emph{-split} if the short exact sequence
\begin{displaymath}
0 \to \iota_*M' \to \iota_*M \to \iota_*M'' \to 0
\end{displaymath}
is split exact. The category $\Modu RG$ together with the class of $R$-split exact sequences determines an exact category which we denote by $\BIK RG$. In other words, $\BIK RG$ is the category of $RG$-modules but we forget those short exact sequences which are not split as sequences of $R$-modules. Similarly, we denote by $\bik RG$ the category of finitely generated $RG$-modules together with the $R$-split exact structure.

It is proved in \cite{BIK8} that both $\BIK RG$ and $\bik RG$ are Frobenius categories. The projective-injective objects are precisely those in the additive closure of the essential image of $\iota^*$, the extension of scalars functor. Explicitly, an $RG$-module $M$ is projective in $\BIK RG$ if and only if the counit $\iota^*\iota_*M\to M$ is a split epimorphism. We call the projective-injective objects in $\BIK RG$ \emph{weakly projective} modules.

As $\BIK RG$ and $\bik RG$ are Frobenius categories, their stable categories, obtained by factoring out the ideals of maps factoring through projective-injective objects, are triangulated and we denote them by $\stBIK RG$ and $\stbik RG$ respectively. The suspension functor on these categories is given by taking cosyzygies, with respect to the $R$-split exact structure, and the triangles come from $R$-split short exact sequences.

Recall that, using the Hopf algebra structure of $RG$, $\Modu RG$ can be equipped with a closed symmetric monoidal structure. The tensor product is $-\otimes_R -$ with the diagonal action and the internal hom is $\Hom_R(-,-)$ where the action is defined in the usual way using the comultiplication and antipode of $RG$. Although neither $-\otimes_R -$ nor $\Hom_R(-,-)$ is exact in general, both functors are certainly exact on $R$-split exact sequences. Thus both the tensor product and internal hom are exact functors on the exact category $\BIK RG$. Thus they descend to $\stBIK RG$ and $\stbik RG$ endowing these categories with closed symmetric monoidal structures.

\subsection{Gorenstein projectives and the singularity category}
We briefly collect here some definitions and material required in Section~\ref{sec_regular}. A great deal more information on these subjects can be found in \cite{Buchweitzunpub} and \cite{EnochsJenda}. Let us begin with the class of rings which will be of most utility for us.

\begin{defn}
A (not necessarily commutative) noetherian ring $\Lambda$ is \emph{Iwanaga-Gorenstein} (or just Gorenstein for short) if the self-injective dimension of $\Lambda$ over itself is finite on both the left and the right. In this case these injective dimensions have a common value $n$ and we will say $\Lambda$ is $n$-Gorenstein.
\end{defn}

\begin{ex}\label{ex_RG}
If $R$ is an $n$-Gorenstein commutative ring and $G$ is a finite group then the group algebra $RG$ is also $n$-Gorenstein. This result goes back to \cite{EilenbergNakayama}.
\end{ex}

Given a Gorenstein ring $\Lambda$ there are two particularly well behaved classes of modules: the Gorenstein projectives and Gorenstein injectives. We will only speak of the Gorenstein projective modules as we will focus on these, but one can just dualise to obtain the definition of a Gorenstein injective module. Let us remark that both of these notions make sense in greater generality. However, we will make use of the assumption that $\Lambda$ is Gorenstein and not give the most general definition.

\begin{defn}
A right $\Lambda$-module $M$ is \emph{Gorenstein projective} if it occurs as a syzygy in an acyclic complex
\begin{displaymath}
\cdots \to P^i \to P^{i+1} \to \cdots
\end{displaymath}
where each $P^i$ is a projective right $\Lambda$-module.
\end{defn}

There are many equivalent formulations of this condition. The most useful for us are listed in the following theorem, a proof of which can be found, for instance, in \cite{EnochsJenda}.

\begin{thm}
Let $M$ be a right $\Lambda$-module. The following statements are equivalent:
\begin{itemize}
\item[$(1)$] $M$ is Gorenstein projective;
\item[$(2)$] $\Ext^i(M,L)=0$ for all right modules $L$ of finite projective dimension and $i\geq 1$.
\end{itemize}
If $M$ is finitely generated then these conditions are also equivalent to
\begin{itemize}
\item[$(3)$] $\Ext^i(M,P)=0$ for all projective right modules $P$ and $i\geq 1$.
\end{itemize}
\end{thm}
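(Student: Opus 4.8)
The plan is to prove $(1)\Rightarrow(2)\Rightarrow(3)$ together with $(3)\Rightarrow(1)$ for finitely generated $M$, noting along the way that $(3)$ in fact implies $(2)$ with no finiteness hypothesis, so that $(2)\Rightarrow(1)$ follows by the same argument used for $(3)\Rightarrow(1)$. I will use freely the standard fact about Iwanaga--Gorenstein rings (see \cite{EnochsJenda}) that a module has finite projective dimension if and only if it has finite injective dimension, in which case both are at most $n$.

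For $(1)\Rightarrow(2)$: suppose $M$ is a syzygy in an acyclic complex $P^\bullet$ of projective modules, and let $L$ have finite projective dimension, hence finite injective dimension at most $n$. If $I$ is injective then $\Hom_\Lambda(-,I)$ is exact, so $\Hom_\Lambda(P^\bullet,I)$ is acyclic. Choosing an injective resolution $0\to L\to I^0\to\cdots\to I^n\to 0$, and using that each term of $P^\bullet$ is projective, one obtains short exact sequences of complexes $0\to\Hom_\Lambda(P^\bullet,L)\to\Hom_\Lambda(P^\bullet,I^0)\to\Hom_\Lambda(P^\bullet,I^0/L)\to 0$ and so on down the resolution; a short induction on the injective dimension of $L$ then shows $\Hom_\Lambda(P^\bullet,L)$ is acyclic. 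Since a suitable truncation of $P^\bullet$ is a projective resolution of $M$, this gives $\Ext^i(M,L)=0$ for all $i\geq 1$. The implication $(2)\Rightarrow(3)$ is immediate, as projectives have projective dimension zero; and $(3)\Rightarrow(2)$ follows by dimension shifting along a finite projective resolution of $L$, using the long exact sequences for $\Hom_\Lambda(M,-)$ and the vanishing $\Ext^{i}(M,P)=0$ for $i\geq 1$ and $P$ projective. This last step uses neither the Gorenstein hypothesis nor that $M$ is finitely generated.

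The substantial implication is $(3)\Rightarrow(1)$ for finitely generated $M$ --- equivalently $(2)\Rightarrow(1)$ in general --- and it is the only place the Gorenstein hypothesis is essential. I would argue in three steps. First, produce a short exact sequence $0\to L\to G\to M\to 0$ with $G$ Gorenstein projective and $\pdim L\leq n$; this is the assertion that the Gorenstein projectives together with the modules of finite projective dimension form a complete cotorsion pair, a known consequence of the Gorenstein hypothesis. (For finitely generated $M$ over the noetherian ring $\Lambda$ one can keep everything finitely generated: one checks that an $n$-th syzygy $N$ of $M$ is Gorenstein projective --- building the right half of its complete resolution from the Auslander transpose of $N$, using $\Ext^{i}(N,\Lambda)=0$ for $i\geq 1$ and the reflexivity of $N$ forced by the Gorenstein condition --- and then a standard argument on Gorenstein projective dimension produces the desired sequence.) Second, since $M$ satisfies $(2)$ we have $\Ext^1(M,L)=0$, so the sequence splits and $M$ is a direct summand of $G$. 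Third, the class of Gorenstein projective modules is closed under direct summands, by an Eilenberg swindle, so $M$ is Gorenstein projective.

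The main obstacle is the construction in the first step --- that every module over an $n$-Gorenstein ring sits in a short exact sequence $0\to L\to G\to M\to 0$ with $G$ Gorenstein projective and $L$ of finite projective dimension. This is where the genuinely ring-theoretic input lives: the coincidence of finite projective dimension and finite injective dimension over a Gorenstein ring, which bounds the Gorenstein projective dimension of every module by $n$ and makes acyclic complexes of projectives automatically totally acyclic. I would either reproduce this following \cite{EnochsJenda} or simply cite it; the closure of the Gorenstein projectives under direct summands used in the third step is routine, as is the remaining dimension shifting.
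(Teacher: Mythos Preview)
Your proof sketch is correct, but note that the paper does not actually prove this theorem: it is quoted as a background result with the remark that ``a proof of which can be found, for instance, in \cite{EnochsJenda}.'' So there is no proof in the paper to compare against.

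That said, your outline is essentially the standard argument one finds in Enochs--Jenda. The implication $(1)\Rightarrow(2)$ via finite injective dimension of $L$ and induction, the trivial $(2)\Leftrightarrow(3)$ by dimension shifting, and the crucial $(2)\Rightarrow(1)$ via a Gorenstein projective approximation $0\to L\to G\to M\to 0$ with $\pdim L<\infty$, followed by splitting and closure under summands, is exactly how the literature handles it. You are right to flag the approximation step as the one carrying the real content, and right that over an Iwanaga--Gorenstein ring this follows from the fact that every module has Gorenstein projective dimension at most $n$ (equivalently, that acyclic complexes of projectives are automatically totally acyclic). Your parenthetical sketch of the finitely generated case via the $n$-th syzygy and the Auslander transpose is also the standard route. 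There is nothing to correct here; your write-up would simply be supplying a proof the paper chose to cite.
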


We denote by $\GProj \Lambda$ the full subcategory of Gorenstein projective right $\Lambda$-modules and by $\Gproj \Lambda$ the full subcategory of finitely generated Gorenstein projective right $\Lambda$-modules. Both of these categories inherit exact structures from $\Modu \Lambda$ and are, in fact, Frobenius categories. The projective-injective objects are precisely the projective $\Lambda$-modules.

The stable category $\sGproj RG$ of finitely generated Gorenstein projective modules can be identified with another homological invariant of $\Lambda$, namely the singularity category.

\begin{defn}\label{defn_sg}
The \emph{singularity category} of $\Lambda$ is defined to be the Verdier quotient
\begin{displaymath}
\sfD_{\mathrm{Sg}}(\Lambda) = \sfD^\mathrm{b}(\modu \Lambda)/\sfD^{\mathrm{perf}}(\Lambda),
\end{displaymath}
where $\sfD^\mathrm{b}(\modu \Lambda)$ denotes the bounded derived category of $\modu \Lambda$ and $\sfD^\mathrm{perf}(\Lambda)$ denotes the perfect complexes, i.e.\ the thick subcategory of objects quasi-isomorphic to a bounded complex of finitely generated projectives.
\end{defn}

The connection between these two triangulated categories is given by the following proposition which is originally due to Buchweitz \cite{Buchweitzunpub}.

\begin{prop}\label{prop_sgequiv}
The composite
\begin{displaymath}
\Gproj \Lambda \to \sfD^\mathrm{b}(\modu \Lambda) \to \sfD_\mathrm{Sg}(\Lambda)
\end{displaymath}
induces an equivalence of triangulated categories
\begin{displaymath}
\sfD_\mathrm{Sg}(\Lambda) \iso \sGproj \Lambda
\end{displaymath}
\end{prop}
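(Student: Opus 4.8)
The plan is to produce the comparison functor by hand, check that it is triangulated, and then establish full faithfulness and essential surjectivity separately. The composite $\Gproj\Lambda\to\sfD^{\mathrm b}(\modu\Lambda)\to\sfD_{\mathrm{Sg}}(\Lambda)$ kills every morphism factoring through a projective module, a finitely generated projective being a perfect complex, so it factors through a functor $\bar\phi\colon\sGproj\Lambda\to\sfD_{\mathrm{Sg}}(\Lambda)$. This $\bar\phi$ is triangulated: a conflation $0\to A\to B\to C\to0$ in the Frobenius category $\Gproj\Lambda$ maps to a triangle $A\to B\to C\to A[1]$ in $\sfD^{\mathrm b}(\modu\Lambda)$, and the short exact sequence $0\to M\to P\to\Sigma M\to0$ defining the suspension of $\sGproj\Lambda$ (with $P$ projective, hence zero in $\sfD_{\mathrm{Sg}}(\Lambda)$) gives a natural isomorphism $\bar\phi(\Sigma M)\iso\bar\phi(M)[1]$.

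Full faithfulness is the crux: for finitely generated Gorenstein projective $M$ and $N$ one must show that $\underline{\Hom}_{\Lambda}(M,N)\to\Hom_{\sfD_{\mathrm{Sg}}(\Lambda)}(M,N)$ is bijective. I would fix a complete resolution of $N$, that is an acyclic complex $T^{\bullet}$ of finitely generated projectives with $N=Z^{0}(T^{\bullet})$, and write $\Omega^{k}N:=Z^{-k}(T^{\bullet})$; each $\Omega^{k}N$ is again Gorenstein projective, being a syzygy in an acyclic complex of projectives. The stupid truncation $\sigma_{\geq-k}T^{\bullet}$ is quasi-isomorphic to $\Omega^{k}N$ placed in cohomological degree $-k$, so the subcomplex inclusions $\sigma_{\geq0}T^{\bullet}\hookrightarrow\sigma_{\geq-k}T^{\bullet}$ yield morphisms $t_{k}\colon N\to\Omega^{k}N[k]$ in $\sfD^{\mathrm b}(\modu\Lambda)$ whose cones, the bounded complexes $\sigma_{[-k,-1]}T^{\bullet}$, are perfect. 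Because $\Lambda$ is projective, $\Hom_{\sfD^{\mathrm b}(\modu\Lambda)}(D,\Omega^{k}N[k+1])=0$ for any perfect complex $D$ once $k\gg0$, which is exactly what is needed to see that the family $\{t_{k}\}$ is cofinal among the morphisms out of $N$ with perfect cone; computing morphisms in the Verdier quotient by fractions then yields
\[
\Hom_{\sfD_{\mathrm{Sg}}(\Lambda)}(M,N)\;\iso\;\colim_{k}\Hom_{\sfD^{\mathrm b}(\modu\Lambda)}\bigl(M,\Omega^{k}N[k]\bigr)\;=\;\colim_{k}\Ext^{k}_{\Lambda}(M,\Omega^{k}N).
\]
The transition maps are the connecting homomorphisms of the short exact sequences $0\to\Omega^{k+1}N\to T^{-k-1}\to\Omega^{k}N\to0$; they are isomorphisms for $k\geq1$ since $\Ext^{\geq1}_{\Lambda}(M,P)=0$ for projective $P$ by condition~$(3)$ of the characterisation above, while the $k=0$ map identifies $\Ext^{1}_{\Lambda}(M,\Omega N)$ with $\underline{\Hom}_{\Lambda}(M,N)$. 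It then remains to check that the resulting isomorphism $\underline{\Hom}_{\Lambda}(M,N)\iso\Hom_{\sfD_{\mathrm{Sg}}(\Lambda)}(M,N)$ is the one induced by $\bar\phi$.

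For essential surjectivity I would use that $\Lambda$ is $n$-Gorenstein. Given $X\in\sfD^{\mathrm b}(\modu\Lambda)$, resolve it by finitely generated projectives; the same stupid-truncation argument shows that, for $d$ sufficiently large, $X$ is isomorphic in $\sfD_{\mathrm{Sg}}(\Lambda)$ to a shift of a finitely generated module, which by enlarging $d$ we may take to be an $n$-th syzygy of a finitely generated module. As $\Ext^{>n}_{\Lambda}(-,\Lambda)=0$, such a module has $\Ext^{\geq1}_{\Lambda}(-,P)=0$ for every projective $P$, hence is Gorenstein projective by condition~$(3)$ above; therefore $X$ lies in the essential image of $\bar\phi$, and $\bar\phi$ is an equivalence.

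I expect the full faithfulness step to be the main obstacle, and within it the cofinality argument legitimising the colimit description of morphisms in the singularity category, together with the check that the chain of dimension-shift isomorphisms is natural and compatible with $\bar\phi$. The Gorenstein hypothesis is used only lightly: to produce the complete resolution $T^{\bullet}$ and, for essential surjectivity, to guarantee that sufficiently high syzygies are Gorenstein projective. An alternative route factors $\bar\phi$ through the homotopy category of totally acyclic complexes of finitely generated projective $\Lambda$-modules, for which the comparison functor to $\sfD_{\mathrm{Sg}}(\Lambda)$ is fully faithful over any ring and the identification with $\sGproj\Lambda$ is formal.
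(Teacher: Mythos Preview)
The paper does not supply a proof of this proposition at all: it is quoted as a result ``originally due to Buchweitz'' and attributed to \cite{Buchweitzunpub}. There is therefore nothing in the paper to compare your argument against.

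That said, your sketch is essentially Buchweitz's original argument and is correct in outline. A couple of small remarks. The phrase ``Because $\Lambda$ is projective'' is misleading; what you actually use is that a perfect complex $D$ is a bounded complex of finitely generated projectives, so $\RHom(D,-)$ is computed termwise and has bounded amplitude, whence $\Hom_{\sfD^{\mathrm b}}(D,\Omega^{k}N[k+1])$ vanishes once $k$ exceeds that amplitude. For the cofinality claim you should also note that any two factorisations of $t_{k}$ through a given $s$ become equal after passing to a larger $k'$, since their difference factors through the perfect cone of $s$ and $\Hom$ from a perfect complex into $\Omega^{k'}N[k']$ again vanishes for $k'\gg0$; this is needed for the colimit description of morphisms in the Verdier quotient to be valid. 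With these points made precise, the argument goes through.
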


As noted in Example~\ref{ex_RG} if $R$ is Gorenstein and $G$ is a finite group then the group algebra $RG$ is also Gorenstein. Thus we can apply these constructions to $RG$ and we obtain a triangulated category $\sGproj RG$. The goal of the rest of the paper is to compare the category $\stbik RG$ to $\sGproj RG$. In the classical case, where $R=k$ is a field, every $kG$-module is Gorenstein projective and every sequence of $kG$-modules is $k$-split and these two categories coincide. As we shall see the situation is more complicated in general.

\section{A comparison theorem for self-injective coefficients}\label{sec_selfinjective}

This section is dedicated to comparing the usual stable category and the relative stable category when the coefficients are such that the group algebra is self-injective (so the usual stable category is triangulated). Although the two categories do not necessarily agree we will prove in Theorem~\ref{thm_comparison} that they are ``equivalent up to a thick subcategory''. Stated more precisely, they share a common Verdier quotient.  This allows us to deduce a great deal of information about thick subcategories of $\stbik RG$, especially when $R$ is an artinian complete intersection and $G$ is abelian as explained in Section~\ref{sec_app}.

Throughout this section $R$ will denote a zero dimensional commutative Gorenstein ring, i.e.\ $R$ is artinian and self-injective, and we let $G$ be a finite group. Let $RG$ denote the group algebra and $\iota\colon R\to RG$ the canonical inclusion of $R$. We note that since $R$ is self-injective so is the group algebra $RG$. In fact, it is a Frobenius $R$-algebra.

We have an obvious exact functor $\bik RG \to \modu RG$, namely the identity functor. However, this functor clearly fails to send projectives to projectives. We can attempt to remedy this by defining a thick subcategory of $\stmodu RG$
\begin{displaymath}
\mcK = \thick_{\stmodu RG}(\iota^*M \; \vert \; M\in \modu R)
\end{displaymath}
and considering the composite
\begin{displaymath}
\psi = (\bik RG \to \modu RG \to \stmodu RG \stackrel{\pi}{\to} (\stmodu RG)/\mcK).
\end{displaymath}
The functor $\psi$ kills the projectives in $\bik RG$ by construction and so must factor, as an additive functor, via $\stbik RG$, i.e.\ there is an essentially unique
\begin{displaymath}
\underline{\psi}\colon \stbik RG \to (\stmodu RG)/\mcK
\end{displaymath}
factoring $\psi$. Our theorem will follow from some rumination on the behaviour of $\underline{\psi}$ and $\mcK$.

Before getting under way let us introduce some notation. We will denote (co)syzygies in $\modu RG$ and $\bik RG$ by $\Omega^i$ and $\Omega^i_s$ respectively (the ``s'' here standing for split). For the sake of brevity we write $\mcS = (\stmodu RG)/\mcK$ from this point onward and denote its suspension by $\S_\mcS$. Finally, let us note that we will routinely abuse the isomorphism $i^* \cong i^!$.

\begin{lem}\label{lem_psi_exact}
The functor $\underline{\psi}\colon \stbik RG \to \mcS$ is exact.
\end{lem}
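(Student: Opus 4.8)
The plan is to unwind what an exact functor between these triangulated categories requires and check the two ingredients separately: compatibility with suspension and with triangles. Both categories are stable categories of Frobenius categories (well, $\mcS$ is a Verdier quotient of one), so their triangulated structure is built from (co)syzygies and short exact sequences; the functor $\underline\psi$ is induced by the identity functor $\bik RG\to\modu RG$, which does \emph{not} preserve the exact structure, so the content is precisely in reconciling the two notions of ``triangle''.

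First I would handle the suspension. On $\stbik RG$ the suspension is $\Omega_s^{-1}$, computed from an $R$-split short exact sequence $0\to M\to W\to \Omega_s^{-1}M\to 0$ with $W$ weakly projective, i.e.\ $W$ a summand of some $\iota^*\iota_*M$. Applying the identity functor, this is a genuine short exact sequence of $RG$-modules, hence gives a triangle in $\stmodu RG$, so in $\mcS$ we get a triangle $M\to W\to \Omega_s^{-1}M\to \S M$. The key point is that $W$, being a summand of $\iota^*\iota_*M$, lies in $\mcK$ by definition of $\mcK$; hence $W\simeq 0$ in $\mcS$ and the triangle forces a natural isomorphism $\S_\mcS\,\underline\psi(M)\xrightarrow{\ \sim\ }\underline\psi(\Omega_s^{-1}M)$. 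I would take a moment to check this isomorphism is natural in $M$ — this is routine from functoriality of the weakly projective ``resolutions'' up to the stable category, together with the fact that any choice of $W$ works since all such differ by weakly projective summands which die in $\mcS$.

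Next I would treat triangles. A triangle in $\stbik RG$ is, up to isomorphism, the image of an $R$-split short exact sequence $0\to X\to Y\to Z\to 0$, with connecting map $Z\to \Omega_s^{-1}X$. Applying the identity functor yields an honest short exact sequence of $RG$-modules, hence a triangle $\underline\psi X\to\underline\psi Y\to\underline\psi Z\to \S\,\underline\psi X$ in $\stmodu RG$ and a fortiori in $\mcS$; I then need to identify its connecting morphism, under the natural isomorphism $\S_\mcS\underline\psi X\iso\underline\psi(\Omega_s^{-1}X)$ built above, with $\underline\psi$ applied to the connecting map of the original relative triangle. This is a diagram chase comparing the two ways of producing a connecting morphism from the sequence $0\to X\to Y\to Z\to 0$: the relative one uses the pushout/pullback along $X\to W$ with $W$ weakly projective, the absolute one uses $X\to I$ with $I$ injective over $RG$; mapping both comparisons into $\mcS$, where weakly projectives and $RG$-projectives alike become zero, forces the square to commute.

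The main obstacle, and where I would be most careful, is exactly this last compatibility-of-connecting-maps check: one must verify that the natural transformation witnessing $\underline\psi \S_s \iso \S_\mcS \underline\psi$ is compatible with the boundary maps of all triangles simultaneously, not just that it exists objectwise. Concretely I expect to exhibit, for a given $R$-split sequence, a commutative diagram in $\modu RG$ with the relative cosyzygy construction in one row and an absolute cosyzygy (or injective hull) construction in the other, linked by maps that become isomorphisms after passing to $\mcS$ because the comparison objects lie in $\mcK$; functoriality of everything in $\modu RG$ then gives naturality for free. Everything else — essential uniqueness of $\underline\psi$, additivity — has already been recorded in the paragraph preceding the lemma, so only the triangulated part remains, and it reduces to the observation, used twice, that $\iota^*(\modu R)\subseteq\mcK$ swallows both the weakly projectives of $\bik RG$ and (since $RG$-projectives are weakly projective) the projective-injectives of $\modu RG$.
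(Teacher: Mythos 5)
Your proposal is correct and takes essentially the same route as the paper: establish the isomorphism $\underline{\psi}\,\Omega_s^{-1}\cong\S_\mcS\,\underline{\psi}$ by exploiting that the weakly projective comparison object lies in $\mcK$ and hence dies in $\mcS$, verify naturality, and then check that connecting morphisms match up. The paper constructs the suspension isomorphism via an explicit morphism of short exact sequences into the $RG$-injective envelope $E(M)$ whereas you extract it directly from the triangle associated to $0\to M\to W\to\Omega_s^{-1}M\to 0$ with $W$ becoming zero in $\mcS$, but this is only a cosmetic difference; your fuller discussion of the compatibility of connecting maps, which the paper dismisses as ``essentially immediate,'' is sound and if anything more explicit.
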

\begin{proof}
For an $RG$-module $M$ we have two short exact sequences in $\modu RG$
\begin{displaymath}
\xymatrix{
0 \ar[r] & M \ar@{=}[d] \ar[r] & \iota^!\iota_*M \ar[r] \ar@{-->}[d]^-{\exists} & \Omega_s^{-1}M \ar[r] \ar@{-->}[d]^-\exists & 0 \\
0 \ar[r] & M \ar[r] & E(M) \ar[r] & \Omega^{-1}M \ar[r] & 0 
}
\end{displaymath}
where $E(M)$ is the injective envelope of $M$, the centre arrow exists by injectivity of $E(M)$, and this extension induces the rightmost vertical arrow. Projecting this diagram to $\mcS$ gives an isomorphism
\begin{displaymath}
\underline{\psi}\Omega_s^{-1}M = \pi\Omega_s^{-1}M \stackrel{\sim}{\to} \pi\Omega^{-1}M = \S_\mcS \underline{\psi}M.
\end{displaymath}
Since the choice of extending morphism we made to construct the original diagram is unique up to $\proj RG$ one can assemble these isomorphisms into a natural isomorphism
\begin{displaymath}
\underline{\psi}\Omega_s^{-1} \stackrel{\sim}{\to} \S_\mcS\underline{\psi}.
\end{displaymath}
It is then essentially immediate that, up to this natural isomorphism, $\underline{\psi}$ sends triangles to triangles as it is the identity on objects and $\bik RG \to \modu RG$ is exact.
\end{proof}

\begin{lem}\label{lem_cone}
If $f\colon M\to N$ is a map of finitely generated $RG$-modules such that
\begin{displaymath}
L = \cone_{\stmodu RG}(f) \in \mcK
\end{displaymath}
then $L' = \cone_{\stbik RG}(f)$, viewed as an object of $\stmodu RG$, lies in $\mcK$.
\end{lem}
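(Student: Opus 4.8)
The plan is to avoid manipulating cones directly and instead read the statement off from the exactness of $\underline{\psi}$ (Lemma~\ref{lem_psi_exact}), after transporting everything into $\mcS=(\stmodu RG)/\mcK$. Two observations about $\underline{\psi}$, both immediate from its construction, are needed. First, since $\underline{\psi}$ factors $\psi$ and $\stbik RG$ has the same objects as $\bik RG$, we have $\underline{\psi}X=\psi X=\pi X$ for every finitely generated $RG$-module $X$, where $\pi\colon\stmodu RG\to\mcS$ is the Verdier quotient functor; and, $\mcK$ being thick, $\pi X\iso 0$ in $\mcS$ exactly when $X$, viewed as an object of $\stmodu RG$, lies in $\mcK$. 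Second, if $g$ is an $RG$-module homomorphism between objects of $\bik RG$, then $\underline{\psi}$ sends the class of $g$ in $\stbik RG$ to $\pi[g]$, with $[g]$ the class of $g$ in $\stmodu RG$.

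Now to the argument. By definition $L'=\cone_{\stbik RG}(f)$ sits in a triangle
\begin{displaymath}
M\to N\to L'\to\Omega_s^{-1}M
\end{displaymath}
in $\stbik RG$. Applying the exact functor $\underline{\psi}$, together with the natural isomorphism $\underline{\psi}\Omega_s^{-1}\iso\S_\mcS\underline{\psi}$ accompanying it, converts this into a triangle in $\mcS$ which exhibits $\underline{\psi}L'=\pi L'$ as a cone of $\underline{\psi}f=\pi[f]$. On the other hand $\pi$ is a triangulated functor, so it sends $\cone_{\stmodu RG}(f)=L$ to a cone of $\pi[f]$ as well. Hence $\pi L'\iso\pi L$ in $\mcS$. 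By hypothesis $L\in\mcK=\ker\pi$, so $\pi L\iso 0$ and therefore $\pi L'\iso 0$; by the first paragraph this says precisely that $L'$, regarded as an object of $\stmodu RG$, lies in $\mcK$. This is independent of the choice of cone $L'$: two choices differ in $\stbik RG$ only by weakly projective summands, and weakly projective modules already lie in $\mcK$, being summands of modules of the form $\iota^*N$ with $N\in\modu R$.

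The only point that needs care is the identification of $\underline{\psi}$ with $\pi$ on objects and morphisms together with the fact that both $\underline{\psi}$ and $\pi$ preserve cones; once these are granted the result is immediate, so I do not expect a serious obstacle — the substance was already spent in Lemma~\ref{lem_psi_exact}. For a proof that does not pass through $\underline{\psi}$, one can argue directly on modules: lift the inclusion $M\hookrightarrow\iota^!\iota_*M$ over the injective envelope $M\hookrightarrow E(M)$ and push both of the resulting short exact sequences out along $f$; the $3\times 3$ lemma for triangulated categories, applied twice, then shows that the cone of the induced map $L'\to L$ agrees with the cone of $\iota^!\iota_*M\to E(M)$, and the latter lies in $\mcK$ since $\iota^!\iota_*M\in\mcK$ while $E(M)\iso 0$ in $\stmodu RG$. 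As $L\in\mcK$, this forces $L'\in\mcK$. This route is correct but noticeably more involved.
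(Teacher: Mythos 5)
Your argument is correct and takes a genuinely different route from the paper's. You deduce the statement from the exactness of $\underline{\psi}$ (Lemma~\ref{lem_psi_exact}): since $\underline{\psi}$ agrees with the quotient $\pi\colon\stmodu RG\to\mcS$ on objects and on honest module maps, applying it to the triangle on $f$ in $\stbik RG$ shows $\pi L'$ is a cone of $\pi[f]$, as is $\pi L$, so $\pi L'\iso\pi L\iso 0$ and thickness of $\mcK$ gives $L'\in\mcK$. The paper instead argues directly on modules: it writes $L'=\coker(M\xrightarrow{(\eta,f)}\iota^!\iota_*M\oplus N)$, assembles a homotopy bicartesian square in $\stmodu RG$ together with the induced triangle $\iota^!\iota_*M\to L'\to L\to\S\,\iota^!\iota_*M$ (invoking \cite{NeeCat}*{Lemma~1.4.4}), and concludes $L'\in\mcK$ from two-out-of-three. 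Your approach is cleaner once Lemma~\ref{lem_psi_exact} is on the table and makes transparent that the whole content is ``$\underline\psi$ preserves cones and kills $\mcK$''; the paper's diagram chase has the modest advantage of producing the explicit triangle $\iota^!\iota_*M\to L'\to L$ and of not leaning on the somewhat terse verification of exactness in Lemma~\ref{lem_psi_exact}. There is no circularity in your route (Lemma~\ref{lem_psi_exact} precedes and does not use Lemma~\ref{lem_cone}), and the alternative direct argument you sketch at the end is essentially the paper's.
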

\begin{proof}
By definition $L'$ can be constructed (up to projective/injectives) as
\begin{displaymath}
L' = \coker(M \stackrel{(\eta,f)}{\to} \iota^!\iota_*M \oplus N).
\end{displaymath}
This gives rise to a diagram in $\stmodu RG$
\begin{displaymath}
\xymatrix{
M \ar[r]^-\eta \ar[d]_-f & \iota^!\iota_*M \ar[d] \\ 
N \ar[r] \ar[d] & L' \ar[d] \\
L \ar[r]^-{\sim} & L
}
\end{displaymath}
where the top square is homotopy bicartesian and expresses $L'$ as the cokernel described above, and the bottom square arises from completing the vertical maps to triangles (see for instance \cite{NeeCat}*{Lemma~1.4.4}). It follows that $L'\in \mcK$ since $\iota^!\iota_*M$ is in $\mcK$ by definition, $L$ is in $\mcK$ by assumption, and $\mcK$ is thick.
\end{proof}

\begin{lem}\label{lem_K_thick}
The full subcategory
\begin{displaymath}
\mcK' = \{\ob \mcK\}
\end{displaymath}
of $\stbik RG$, i.e.\ the full subcategory on the objects which the weakly projective objects generate in $\stmodu RG$, is thick in $\stbik RG$. In particular, we have an equality
\begin{displaymath}
\mcK' = \underline{\psi}^{-1}(0).
\end{displaymath}
\end{lem}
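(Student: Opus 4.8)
The plan is to exploit the structural results established so far, especially Lemmas~\ref{lem_psi_exact} and~\ref{lem_cone}, to show that $\mcK'$ is closed under suspension, cones, and direct summands in $\stbik RG$, and simultaneously to identify it with the kernel of $\underline\psi$. First I would prove the inclusion $\mcK' \cie \underline\psi^{-1}(0)$: if $M\in \mcK$ then, viewed in $\stmodu RG$, it lies in $\mcK$ by definition, so $\psi M = \pi M = 0$ in $\mcS$, and since $\underline\psi$ is (up to natural isomorphism) the identity on objects followed by $\pi$, we get $\underline\psi M = 0$. The reverse inclusion is the substantive point: suppose $M\in \stbik RG$ with $\underline\psi M = 0$, i.e.\ $\pi M = 0$ in $\mcS = (\stmodu RG)/\mcK$. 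By the description of morphisms in a Verdier quotient, an object maps to zero precisely when it is a summand of an object of $\mcK$ (or, since $\mcK$ is thick and hence closed under summands, when it already lies in $\mcK$); more carefully, $\pi M = 0$ forces $M\oplus M' \in \mcK$ for some $M'$, and thickness of $\mcK$ gives $M \in \mcK$, hence $M\in \mcK'$. So $\mcK' = \underline\psi^{-1}(0)$.

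Having identified $\mcK'$ as $\underline\psi^{-1}(0)$, thickness becomes largely formal. The kernel of an exact functor between triangulated categories is always a thick subcategory: it is closed under suspension and desuspension because $\underline\psi$ commutes with $\S_\mcS$ and $\S_\mcS$ is an equivalence; it is closed under cones because a triangle $M\to N\to L\to \S M$ maps to a triangle under the exact functor $\underline\psi$, so if two of $\underline\psi M, \underline\psi N, \underline\psi L$ vanish then so does the third; and it is closed under direct summands because if $\underline\psi(M\oplus M') = 0$ then $\underline\psi M$ is a summand of $0$, hence $0$. Lemma~\ref{lem_psi_exact} supplies exactly the exactness of $\underline\psi$ needed here, so this step is immediate once the kernel description is in hand. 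It may be worth remarking that closure under cones of $\mcK'$ inside $\stbik RG$ can alternatively be seen directly from Lemma~\ref{lem_cone}, which says that if $f\colon M\to N$ has $\cone_{\stmodu RG}(f)\in\mcK$ then $\cone_{\stbik RG}(f)\in\mcK$ as well; combined with the observation that for $M,N\in\mcK'$ one has $\cone_{\stmodu RG}(f)\in\mcK$ (as $\mcK$ is triangulated in $\stmodu RG$), this gives $\cone_{\stbik RG}(f)\in\mcK'$ without reference to $\underline\psi$.

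The main obstacle I anticipate is the reverse inclusion $\underline\psi^{-1}(0)\cie\mcK'$, specifically pinning down the precise sense in which ``$\pi M = 0$'' implies ``$M$ is a summand of an object of $\mcK$''. One must be careful that $\pi M \iso 0$ in the quotient category genuinely forces $M$ itself into $\mcK$ rather than merely some shift or summand of it, and here the fact that $\mcK$ was \emph{defined} to be thick in $\stmodu RG$ — in particular idempotent-complete within $\stmodu RG$ insofar as summands of its objects that happen to lie in $\stmodu RG$ are again in $\mcK$ — is what closes the gap. A clean way to argue is: $\pi M = 0$ means $\id_M$ factors through an object of $\mcK$ in $\stmodu RG$ after inverting $\mcK$-quasi-isomorphisms, which by the calculus of fractions yields a morphism $M\to K$ with $K\in\mcK$ whose cone lies in $\mcK$; completing to a triangle and using thickness of $\mcK$ in $\stmodu RG$ forces $M\in\mcK$. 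Once this is cleanly stated the rest of the lemma follows with no further difficulty, and the ``in particular'' clause about $\underline\psi^{-1}(0)$ is then just a restatement of what was proved.
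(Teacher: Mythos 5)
Your proof is correct, but it inverts the logic of the paper's argument, and the inversion is a genuine simplification. The paper proves thickness of $\mcK'$ directly: it checks closure under summands by hand, checks closure under $\Omega_s^{\pm 1}$ by recognising the cosyzygy triangle inside $\stmodu RG$, and checks closure under cones by invoking Lemma~\ref{lem_cone}; the identification $\mcK' = \underline{\psi}^{-1}(0)$ is then recorded as an afterthought. You instead establish $\mcK' = \underline{\psi}^{-1}(0)$ first — the forward inclusion is immediate from $\underline{\psi}$ being the identity on objects followed by $\pi$, and the reverse inclusion is the standard fact that the kernel of the quotient functor $\stmodu RG \to (\stmodu RG)/\mcK$ is exactly $\mcK$ because $\mcK$ was chosen thick — and then deduce thickness for free from Lemma~\ref{lem_psi_exact}, since kernels of exact functors of triangulated categories are always thick. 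This buys you economy: Lemma~\ref{lem_cone} becomes superfluous for this particular lemma (though you correctly observe it gives an alternative direct route to closure under cones, and it is still needed later in the proof of Theorem~\ref{thm_comparison}, so it is not wasted work in the paper). The one place your write-up could be tightened is the digression about ``$\id_M$ factors through an object of $\mcK$ after inverting $\mcK$-quasi-isomorphisms''; the clean statement is simply that for a thick subcategory $\mcD$ of a triangulated category $\mcT$, an object $X$ satisfies $X \cong 0$ in $\mcT/\mcD$ if and only if $X \in \mcD$, and no calculus-of-fractions unwinding is needed once one appeals to that.
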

\begin{proof}
Since $\bik RG \to \modu RG$ is additive and the identity on objects it is immediate that $\mcK'$ is closed under direct sums and summands. If $M\in \mcK'$ then a representative for $\Omega_s^{-1}M$ is
\begin{displaymath}
\coker(M \stackrel{\eta}{\to} \iota^!\iota_*M).
\end{displaymath}
The map $\eta$ is injective so this gives a triangle in $\stmodu RG$ and since $M$ and $\iota^!\iota_*M$ are in $\mcK$ we deduce that the third term of this triangle $\Omega_s^{-1}M$ also lies in $\mcK$. Similar considerations show that $\mcK'$ is also closed under $\Omega_s$.

Finally, suppose $M,N \in \mcK'$ and we are given a map $f\colon M\to N$. By definition $M$ and $N$, viewed in $\stmodu RG$, lie in $\mcK$ and so we know the cone of $f$ taken in $\stmodu RG$ also lies in $\mcK$. Thus we can apply Lemma~\ref{lem_cone} which tells us the cone of $f$ taken in $\stbik RG$ and viewed as an object of $\stmodu RG$ also lies in $\mcK$. Hence it lies in $\mcK'$ and so $\mcK'$ is closed under cones which completes the proof that it is a thick subcategory of $\stbik RG$.

The final statement concerning the kernel then follows: since $\underline{\psi}$ is the identity on objects $\underline{\psi}(M) \cong 0$ in $\mcS$ if and only if $M\in \mcK$ if and only if $\underline{\psi}^{-1}M\in \mcK'$.
\end{proof}

\begin{thm}\label{thm_comparison}
The functor $\underline{\psi}$ induces an equivalence of triangulated categories
\begin{displaymath}
\phi \colon (\stbik RG)/\mcK' \to \mcS = (\stmodu RG)/\mcK,
\end{displaymath}
where
\begin{displaymath}
\mcK = \thick_{\stmodu RG}(\iota^*M \; \vert \; M\in \modu R)
\end{displaymath}
and $\mcK'$ is the full subcategory of $\stbik RG$ with the same objects as $\mcK$.
\end{thm}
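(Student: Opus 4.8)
The plan is to verify that $\phi$ is an equivalence by checking it is essentially surjective and fully faithful, with Lemmas~\ref{lem_psi_exact}, \ref{lem_cone}, and \ref{lem_K_thick} supplying all the real input. Since $\underline{\psi}$ is exact (Lemma~\ref{lem_psi_exact}) and annihilates the thick subcategory $\mcK'$ (Lemma~\ref{lem_K_thick}), the universal property of the Verdier quotient produces an exact $\phi\colon(\stbik RG)/\mcK'\to\mcS$ with $\phi q=\underline{\psi}$, where $q$ is the localisation functor; because $\underline{\psi}$ is the identity on objects and $\mcK,\mcK'$ have the same objects, $\phi$ is the identity on objects, so in particular essentially surjective. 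Before the two main steps I would record one consequence of exactness, used twice below: if $\bar s\colon X\to M$ in $\stbik RG$ has $\cone_{\stbik RG}(\bar s)\in\mcK'$, then applying $\underline{\psi}$ to the defining triangle and using $\mcK'=\underline{\psi}^{-1}(0)$ shows $\underline{\psi}(\bar s)$ is an isomorphism in $\mcS$.

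For fullness, I would take a morphism $\alpha\colon M\to N$ of $\mcS$ and, using the calculus of fractions in $\mcS=(\stmodu RG)/\mcK$, write $\alpha=\pi(g)\circ\pi(s)^{-1}$ for a roof $M\xleftarrow{s}X\xrightarrow{g}N$ in $\stmodu RG$ with $\cone_{\stmodu RG}(s)\in\mcK$. Choosing $RG$-module maps representing $s$ and $g$ and letting $\bar s,\bar g$ be the induced morphisms of $\stbik RG$, the key point is that $\cone_{\stbik RG}(\bar s)$ lies in $\mcK'$ --- this is exactly Lemma~\ref{lem_cone}, since the cone of $s$ computed in $\stmodu RG$ lies in $\mcK$. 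Hence $q(\bar s)$ is invertible, the roof $M\xleftarrow{\bar s}X\xrightarrow{\bar g}N$ defines a morphism $\beta$ in $(\stbik RG)/\mcK'$, and unwinding the definition of $\underline{\psi}$ (which sends the class in $\stbik RG$ of an $RG$-module map to $\pi$ applied to its class in $\stmodu RG$) gives $\phi(\beta)=\pi(g)\circ\pi(s)^{-1}=\alpha$.

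For faithfulness I would invoke the standard criterion that $\phi$ is faithful as soon as every morphism $u$ of $\stbik RG$ with $\underline{\psi}(u)=0$ factors through an object of $\mcK'$. So suppose $\underline{\psi}(u)=0$ and pick an $RG$-module map $\tilde u$ representing $u$; then the class of $\tilde u$ in $\stmodu RG$ maps to $0$ in $\mcS$, hence factors as $\gamma\circ\delta$ through some $K\in\mcK$. Lifting $\gamma,\delta$ to module maps $\tilde\gamma,\tilde\delta$, the difference $\tilde u-\tilde\gamma\tilde\delta$ becomes zero in $\stmodu RG$, so it factors through a projective $RG$-module; but every free $RG$-module is $\iota^*$ of a free $R$-module, so projective $RG$-modules are weakly projective, and therefore $\tilde u-\tilde\gamma\tilde\delta$ already becomes zero in $\stbik RG$. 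Hence $u$ factors through $K$ in $\stbik RG$, and as $K$ is an object of $\mcK'$ this is what was needed; combined with fullness and essential surjectivity this shows $\phi$ is an equivalence.

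The main obstacle is the fullness step. The categories $\stbik RG$ and $\stmodu RG$ carry genuinely different triangulated structures --- cosyzygies, hence cones, are formed with respect to different exact structures --- and there is no triangulated functor $\stbik RG\to\stmodu RG$ along which to transport a fraction, so it is not a priori clear that a roof in $\stmodu RG$ yields a roof in $\stbik RG$. Lemma~\ref{lem_cone} is precisely the bridge that transfers the condition ``$\cone(s)\in\mcK$'' from $\stmodu RG$ to $\stbik RG$; granted that, everything else is routine bookkeeping with fractions together with the elementary observation that a morphism factoring through a projective module already vanishes in $\stbik RG$.
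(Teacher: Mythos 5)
Your proof is correct, and its overall architecture (factor $\underline\psi$ through the quotient via the universal property, note $\phi$ is the identity on objects, then check fullness and faithfulness) matches the paper's. The fullness argument is essentially identical: a roof in $\stmodu RG$ with cone in $\mcK$ transfers to a roof in $\stbik RG$ with cone in $\mcK'$ via Lemma~\ref{lem_cone}, and $\phi$ of the transferred roof is the original fraction. The faithfulness argument, however, takes a genuinely different (and slightly leaner) route. The paper works directly with the calculus of fractions: it takes the zero-witnessing diagram for the fraction in $(\stmodu RG)/\mcK$, lifts it to honest module maps, observes it commutes in $\stbik RG$ because projective $RG$-modules are weakly projective, and then invokes Lemma~\ref{lem_cone} once more to see that the relevant cones lie in $\mcK'$. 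You instead reduce faithfulness to the criterion that every $u$ in $\stbik RG$ with $\underline\psi(u)=0$ factors through $\mcK'$ (using exactness of $\underline\psi$ and $\mcK'=\underline\psi^{-1}(0)$ to invert the denominator), and prove that criterion by the standard fact that a morphism becomes zero in a Verdier quotient iff it factors through an object of the kernel: $u$ factors through some $K\in\mcK$ in $\stmodu RG$, and after lifting the factorisation to module maps the discrepancy factors through a projective, hence a weakly projective, so vanishes already in $\stbik RG$. This avoids a second appeal to Lemma~\ref{lem_cone}; both arguments ultimately hinge on the same elementary observation that projective $RG$-modules are weakly projective, which is what lets one pass from factorisations modulo projectives to factorisations modulo weak projectives.
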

\begin{proof}
We have $\underline{\psi}\mcK' = 0$ by construction so $\underline{\psi}$ factors via an exact functor
\begin{displaymath}
\phi \colon (\stbik RG)/\mcK' \to (\stmodu RG)/\mcK.
\end{displaymath}
The functor $\phi$ is still the identity on objects so it is clearly surjective on the nose.

We first show $\phi$ is faithful. Suppose
\begin{displaymath}
\xymatrix{ M \ar@{<-}[r]^-\alpha & L \ar[r]^-\beta & N}
\end{displaymath}
represents a map in $(\stbik RG)/\mcK'$ where $\cone_{\stbik RG}(\alpha)\in \mcK'$. If $\phi$ of this span is zero then there exists a commutative diagram in $\stmodu RG$
\begin{displaymath}
\xymatrix{
& L \ar[dl]_-\alpha \ar[dr]^-\beta & \\
M \ar@{=}[dr] & L' \ar[l]_-{\alpha'} \ar[u] \ar[r] \ar[d] & N \\
& M \ar[ur]_-{0} &
}
\end{displaymath}
where the cone in $\stmodu RG$ of $\alpha'$ lies in $\mcK$. Picking representative for the modules and the morphisms we get a diagram which commutes up to maps factoring through projectives in $\modu RG$. Since every projective $RG$-module is projective in $\bik RG$ this gives a commutative diagram in $\stbik RG$. By Lemma~\ref{lem_cone} the cones on $\alpha$ and $\alpha'$, taken in $\stbik RG$, lie in $\mcK'$ and so this diagram witnesses that our original map was already zero in $(\stbik RG)/\mcK'$.

We now show $\phi$ is full. Let
\begin{displaymath}
\xymatrix{ M \ar@{<-}[r]^-\alpha & L \ar[r]^-\beta & N}
\end{displaymath}
represent a map in $(\stmodu RG)/\mcK$, i.e.\ the object $\cone_{\stmodu RG}(\alpha)$ lies in $\mcK$. Again using Lemma~\ref{lem_cone} we have that $\cone_{\stbik RG}(\alpha)$ lies in $\mcK'$ so the same span represents a morphism in $(\stbik RG)/\mcK'$. As $\phi$ is just the identity on objects and honest maps of modules this shows $\phi$ is full.
\end{proof}

As an immediate consequence of the theorem we can identify certain sublattices of thick subcategories in the relative and usual stable categories. For a triangulated category $\mcT$ we denote by $\Thick(\mcT)$ the lattice, ordered by inclusion, of thick subcategories of $\mcT$.

\begin{cor}\label{cor_comparison}
The equivalence $\phi$ induces a lattice isomorphism
\begin{displaymath}
\{\mcL \in \Thick(\stbik RG) \; \vert \; \mcK' \subseteq \mcL\} \cong \{\mcM \in \Thick(\stmodu RG) \; \vert \; \mcK \subseteq \mcM\}
\end{displaymath}
\end{cor}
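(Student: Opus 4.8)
The plan is to obtain the corollary by combining two instances of the standard thick-subcategory correspondence for a Verdier quotient with the equivalence $\phi$ of Theorem~\ref{thm_comparison}. Write $\pi\colon \stmodu RG \to \mcS$ and $\pi'\colon \stbik RG \to (\stbik RG)/\mcK'$ for the two quotient functors; by construction $\mcK = \pi^{-1}(0)$ and, by the last assertion of Lemma~\ref{lem_K_thick}, $\mcK' = (\pi')^{-1}(0)$. The input I will use is the following standard consequence of the calculus of fractions: for a triangulated category $\mcT$ with a thick subcategory $\mcN$ and quotient functor $q\colon \mcT \to \mcT/\mcN$, the assignments $\mcL \mapsto q(\mcL)$ and $\mcM \mapsto q^{-1}(\mcM)$ are mutually inverse, inclusion-preserving bijections between the thick subcategories of $\mcT$ containing $\mcN$ and all thick subcategories of $\mcT/\mcN$ (see, for example, \cite{NeeCat}).

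First I would recall the short proof of this fact, since it is the only step requiring any care. That $q^{-1}(\mcM)$ is thick and contains $\mcN$ is immediate, and $q(q^{-1}(\mcM)) = \mcM$ because $q$ is essentially surjective. For the reverse composite, suppose $\mcL \supseteq \mcN$ and $X \in \mcT$ with $q(X)$ in $q(\mcL)$, say $q(X) \cong q(Y)$ with $Y \in \mcL$; picking a roof $X \xleftarrow{s} Z \xrightarrow{f} Y$ representing this isomorphism, one has $\cone(s), \cone(f) \in \mcN \subseteq \mcL$, so the triangles on $f$ and then on $s$ force $Z \in \mcL$ and hence $X \in \mcL$. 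This gives $q^{-1}(q(\mcL)) = \mcL$, and the same argument (lifting a triangle of $\mcT/\mcN$ to one of $\mcT$, and using that $\mcL$ is thick) shows $q(\mcL)$ is closed under suspension, cones and summands, hence is itself thick; so the two assignments are well defined and mutually inverse. Applying this to $(\stmodu RG,\mcK)$ and to $(\stbik RG,\mcK')$ produces poset isomorphisms onto $\Thick(\mcS)$ and $\Thick((\stbik RG)/\mcK')$ respectively, and since an inclusion-preserving bijection with inclusion-preserving inverse preserves all existing meets and joins, these are lattice isomorphisms.

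It then remains to splice the two correspondences using $\phi$. As $\phi$ is an equivalence of triangulated categories it, and its quasi-inverse, carry thick subcategories to thick subcategories, so $\phi$ induces a lattice isomorphism $\Thick((\stbik RG)/\mcK') \xrightarrow{\sim} \Thick(\mcS)$. The composite
\[
\mcL\ \longmapsto\ \pi^{-1}\bigl(\phi(\pi'(\mcL))\bigr)
\]
is then the required lattice isomorphism from $\{\mcL \in \Thick(\stbik RG) : \mcK' \subseteq \mcL\}$ to $\{\mcM \in \Thick(\stmodu RG) : \mcK \subseteq \mcM\}$. I would also point out that, since $\phi$, $\pi$ and $\pi'$ are all the identity on objects and $\mcK$, $\mcK'$ have the same objects, this composite admits the transparent description ``keep the same underlying set of objects''. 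The only genuine obstacle is the bookkeeping in the middle paragraph; there is nothing new beyond Theorem~\ref{thm_comparison}, which is exactly why this is stated as a corollary.
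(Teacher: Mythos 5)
Your proof is correct and is precisely the standard argument that the paper is implicitly invoking when it says the corollary is an ``immediate consequence'' of Theorem~\ref{thm_comparison} (the paper gives no separate proof). You combine the usual Verdier-quotient bijection between thick subcategories of a triangulated category containing the kernel and thick subcategories of the quotient, applied twice, with the fact that an equivalence preserves the lattice of thick subcategories; this is exactly the intended reasoning, so there is nothing to compare beyond saying you have supplied the routine details the paper leaves to the reader.
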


It is worth pointing out that we can actually express $\mcK$ in a slightly more compact form. The ring $RG$ is finite free over $R$ and so $\iota^*$ is an exact functor. It thus induces an exact functor
\begin{displaymath}
\underline{\iota}^* \colon \stmodu R \to \stmodu RG.
\end{displaymath}
Using this observation we come to the following conclusion.

\begin{lem}\label{lem_whatisK?}
Suppose that $R$ is local with residue field $k$. Then there is an equality of thick subcategories
\begin{displaymath}
\mcK = \thick_{\stmodu RG}(kG).
\end{displaymath}
\end{lem}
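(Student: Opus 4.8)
The plan is to prove the inclusion $\mcK \supseteq \thick_{\stmodu RG}(kG)$ first, as it is essentially free: $kG = \iota^*k$ and $k\in \modu R$, so $kG$ is one of the generators of $\mcK$ and hence lies in it. The substantive direction is the reverse inclusion, $\mcK \subseteq \thick_{\stmodu RG}(kG)$. Since $\mcK$ is by definition the thick subcategory generated by the objects $\iota^*M$ for $M\in\modu R$, and $\thick_{\stmodu RG}(kG)$ is thick, it suffices to show that $\iota^*M\in\thick_{\stmodu RG}(kG)$ for every finitely generated $R$-module $M$.

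The key step is a filtration argument carried out over $R$ and then pushed forward along $\iota^*$. Because $R$ is a zero-dimensional (artinian) local ring with residue field $k$, every finitely generated $R$-module $M$ has a finite filtration $0 = M_0 \subset M_1 \subset \cdots \subset M_n = M$ with each quotient $M_i/M_{i-1}\cong k$; concretely one can peel off copies of $k$ by choosing the filtration so that $M_i/M_{i-1}$ is generated by a socle element of $M/M_{i-1}$, or simply invoke that $R$ is artinian so $M$ has a composition series with all factors $\cong k$. Applying the exact functor $\iota^*$ (exact since $RG$ is finite free over $R$) yields short exact sequences $0\to \iota^*M_{i-1}\to \iota^*M_i\to kG\to 0$ of $RG$-modules. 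These are honest short exact sequences in $\modu RG$, hence give triangles in $\stmodu RG$, so by induction on $n$ each $\iota^*M_i$ — in particular $\iota^*M = \iota^*M_n$ — lies in $\thick_{\stmodu RG}(kG)$. This establishes $\mcK\subseteq\thick_{\stmodu RG}(kG)$ and completes the proof.

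The main obstacle, such as it is, is purely bookkeeping: making sure the filtration of $M$ by $R$-submodules with residue-field quotients genuinely exists and is respected by $\iota^*$, and that one is allowed to replace "syzygies/cones in $\stmodu RG$" with the honest short exact sequences coming from the filtration. The first point is immediate from $R$ being artinian local (composition factors of any finitely generated module are all $\cong k$); the second is the standard fact that a short exact sequence in an abelian category with enough projectives (here $\modu RG$) becomes a triangle in the stable category when $RG$ is self-injective, which holds under the running hypothesis on $R$. One might also phrase the whole argument via the exact functor $\underline{\iota}^*\colon \stmodu R\to\stmodu RG$ noted just before the lemma, observing that $\stmodu R = \thick_{\stmodu R}(k)$ since $R$ is local artinian and every module is built from $k$ by extensions; then $\mcK = \thick_{\stmodu RG}(\underline{\iota}^*(\stmodu R)) = \thick_{\stmodu RG}(\underline{\iota}^* k) = \thick_{\stmodu RG}(kG)$, using that a thick subcategory generated by the image of a thick subcategory under an exact functor is generated by the image of any set of generators.
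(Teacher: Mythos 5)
Your proof is correct and follows essentially the same route as the paper: the paper's argument is precisely the streamlined version you give at the end, observing that $\stmodu R = \thick_{\stmodu R}(k)$ since $R$ is artinian local and then transporting this along the exact functor $\underline{\iota}^*$. You simply unpack the composition-series argument that lies behind $\stmodu R = \thick_{\stmodu R}(k)$ before giving the one-line version.
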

\begin{proof}
Since $R$ is local it is clear that
\begin{displaymath}
\stmodu R = \thick_{\stmodu R}(k).
\end{displaymath}
Thus $\mcK$, the thick subcategory generated by the essential image of $\underline{\iota}^*$, is just the thick subcategory generated by $\iota^*k = kG$.
\end{proof}

\subsection{An application: complete intersection coefficients}\label{sec_app}

We now suppose that we have a local artinian complete intersection $(R,\mathfrak{m},k)$ and outline what Corollary~\ref{cor_comparison} tells us about thick subcategories of $\stbik RG$ when $G$ is abelian. We will use terms, such as support, without defining them as this would take us somewhat far afield. Precise definitions can be found in \cite{Stevensonclass}. Our approach relies on the following fact, whose proof we omit since it is rather straightforward.

\begin{lem}
If $R$ is a complete intersection and $G$ is abelian then the group algebra $RG$ is a complete intersection.
\end{lem}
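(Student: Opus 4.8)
The plan is to deduce the statement from the behaviour of the complete intersection property under flat base change. The canonical map $\iota\colon R\to RG$ is faithfully flat — indeed $RG$ is free as an $R$-module of rank $|G|$ — so, by the standard ascent of the complete intersection property along flat homomorphisms, it suffices to know that $R$ is a complete intersection (which is the hypothesis) and that every fibre of $\Spec RG\to\Spec R$ is a complete intersection. The fibre over a prime $\mathfrak p\subseteq R$ is the spectrum of $\kappa(\mathfrak p)\otimes_R RG\cong\kappa(\mathfrak p)G$, the group algebra of $G$ over the residue field $\kappa(\mathfrak p)$, so the whole statement reduces to the special case: for any field $F$ and any finite abelian group $G$, the group algebra $FG$ is a complete intersection.

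To dispose of that case I would invoke the structure theorem for finite abelian groups: writing $G\cong\ZZ/n_1\times\cdots\times\ZZ/n_r$ yields
\[
FG\;\cong\;F[x_1,\dots,x_r]\big/\big(x_1^{n_1}-1,\dots,x_r^{n_r}-1\big).
\]
The polynomial ring $F[x_1,\dots,x_r]$ is regular, and the displayed elements form a regular sequence in it since each $x_i^{n_i}-1$ is monic in a distinct variable; hence, after localising at any maximal ideal, $FG$ is a regular local ring modulo a regular sequence, i.e.\ a complete intersection. Thus every fibre of $\Spec RG\to\Spec R$ is a complete intersection, and $RG$ is a complete intersection.

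The one subtlety to flag is that $RG$ is in general not local — when the residue characteristic of some local factor of $R$ does not divide $|G|$, the semisimple group algebra over that residue field has nontrivial idempotents, which lift to $RG$ — so "complete intersection" must be read throughout as the Zariski-local condition that every localisation at a prime is a complete intersection local ring; the argument above is organised exactly so as to verify that. In the case actually used in this subsection, where $R$ is a \emph{local artinian} complete intersection and hence complete, one can instead argue by hand: write $R=Q/(f_1,\dots,f_c)$ with $(Q,\mathfrak q)$ regular local and $f_1,\dots,f_c$ a $Q$-regular sequence, observe that $RG\cong Q[G]/(f_1,\dots,f_c)Q[G]$ with $f_1,\dots,f_c$ still regular because $Q[G]$ is $Q$-free, and combine this with the presentation of $Q[G]$ above to obtain
\[
RG\;\cong\;Q[x_1,\dots,x_r]\big/\big(f_1,\dots,f_c,\,x_1^{n_1}-1,\dots,x_r^{n_r}-1\big),
\]
a regular ring modulo a regular sequence; localising at any maximal ideal of $RG$ then exhibits it as a complete intersection local ring. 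Either way the only genuinely nontrivial input is the flat base-change criterion (respectively the elementary regularity-of-polynomial-rings facts), and no step presents a real obstacle — which is why the proof is omitted in the text.
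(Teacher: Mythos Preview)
Your argument is correct. The paper omits the proof entirely, remarking only that it is ``rather straightforward'', so there is nothing to compare against; your flat base-change reduction to the field case together with the explicit presentation $FG\cong F[x_1,\dots,x_r]/(x_1^{n_1}-1,\dots,x_r^{n_r}-1)$ is precisely the kind of argument the authors would have had in mind, and your care in reading ``complete intersection'' as a local condition is appropriate since $RG$ need not be local.
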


In this case, by the following theorem, we understand $\stmodu RG$ fairly well.

\begin{thm}[\cite{Stevensonclass}]\label{thm_me}
There are order preserving bijections
\begin{displaymath}
\Thick(\stmodu RG)
\xymatrix{ \ar[r]<1ex>^{\sigma} \ar@{<-}[r]<-1ex>_{\tau} &} \left\{
\begin{array}{c}
\text{specialisation closed} \\ \text{subsets of $\mathbb{P}^{c-1}_k$}
\end{array} \right\}
\end{displaymath}
where $c$ is the codimension of $RG$.
\end{thm}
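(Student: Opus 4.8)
The plan is as follows. Since $G$ is abelian the group algebra $RG$ is commutative, and by the preceding lemma it is an artinian complete intersection; its codimension $c$ is the number of elements in a minimal regular sequence cutting $RG$ out of a regular local ring. Thus Theorem~\ref{thm_me} is precisely the classification theorem of \cite{Stevensonclass} specialised to the commutative ring $RG$, and I will sketch that argument in the present situation.

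\emph{Reduction to the big stable category.} As $RG$ is commutative artinian Gorenstein it is quasi-Frobenius, so $\stmodu RG \simeq \sGproj RG \simeq \sfD_{\mathrm{Sg}}(RG)$ by Proposition~\ref{prop_sgequiv}, and likewise the ``big'' stable category $\Stmodu RG$ of all (not necessarily finitely generated) $RG$-modules is a compactly generated triangulated category whose subcategory of compact objects is $\stmodu RG$. By the usual Neeman--Thomason dictionary, thick subcategories of $\stmodu RG$ correspond order-preservingly and bijectively to the compactly generated localizing subcategories of $\Stmodu RG$, so it suffices to classify localizing subcategories of $\Stmodu RG$ and to record which of them are compactly generated.

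\emph{Tensor action and support.} Write $RG = Q/(f_1,\dots,f_c)$ with $Q$ regular local and $f_1,\dots,f_c$ a minimal regular sequence, so that $c=\codim RG$. The Eisenbud--Gulliksen cohomology operators $\chi_1,\dots,\chi_c$ (of cohomological degree $2$) give a graded-central action of the polynomial ring $S=k[\chi_1,\dots,\chi_c]$ on $\Stmodu RG$, which one upgrades to an action of the derived category of graded $S$-modules by exact ``tensor'' endofunctors of $\Stmodu RG$. Put $\mathbb{P}=\Proj S = \mathbb{P}^{c-1}_k$. Following Benson--Iyengar--Krause, this action produces local cohomology (torsion) functors $\Gamma_V$ for specialisation closed $V\subseteq\mathbb{P}$ and $\Gamma_{\mathfrak p}$ for $\mathfrak p\in\mathbb{P}$, together with a support function $\Supp M=\{\mathfrak p\in\mathbb{P}:\Gamma_{\mathfrak p}M\neq 0\}$; on compact objects $\Supp$ takes values in closed sets and coincides with the usual cohomological support variety of a finitely generated $RG$-module. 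The two maps in the statement are $\sigma(\mcL)=\bigcup_{M\in\mcL}\Supp M$ and $\tau(V)=\{M\in\stmodu RG:\Supp M\subseteq V\}$; both are order preserving, $\tau(V)$ is patently a thick subcategory, and $\sigma\tau(V)=V$ because every specialisation closed subset of $\mathbb{P}^{c-1}_k$ is a union of irreducible closed subsets, each of which is realised as $\Supp M$ for a suitable finitely generated $RG$-module $M$ (the Carlson-type realisation of support varieties).

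\emph{The crux: stratification.} The remaining identity $\tau\sigma(\mcL)=\mcL$ is equivalent to the assertion that $\Stmodu RG$ is \emph{stratified} by the $S$-action, i.e.\ (i) the local-to-global principle $\Loc(M)=\Loc(\Gamma_{\mathfrak p}M:\mathfrak p\in\mathbb{P})$ holds for every $M$, and (ii) for each $\mathfrak p\in\mathbb{P}$ the subcategory $\Gamma_{\mathfrak p}\Stmodu RG$ is either zero or has no proper nonzero localizing subcategories. Granting (i) and (ii), the Benson--Iyengar--Krause stratification machinery gives a bijection between localizing subcategories of $\Stmodu RG$ and arbitrary subsets of $\{\mathfrak p:\Gamma_{\mathfrak p}\Stmodu RG\neq 0\}$; one checks this set is all of $\mathbb{P}$ (equivalently, $\Ext^*_{RG}(k,k)$ is a finitely generated $S$-module with nilpotent annihilator, so the compact generator $kG$ has full support), and then intersecting with $\stmodu RG$ and using that compactly generated localizing subcategories correspond to thick subcategories whose supports run exactly over the specialisation closed subsets yields the claimed bijection. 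Principle (i) is formal once $S$ is noetherian. I expect minimality (ii) to be the main obstacle, as this is where the complete intersection hypothesis genuinely enters: the strategy is a change of rings to a hypersurface --- after a harmless finite extension of $k$ one reduces to a closed point $\mathfrak p$ cut out by a linear form, a generic linear combination $f=\sum a_i f_i$ exhibits $RG$ as a quotient of the hypersurface $A=Q/(f)$, and $A$ acquires an isolated singularity after localizing at $\mathfrak p$; for a hypersurface with isolated singularity the relevant local category is generated by one object whose graded endomorphism ring is a graded division ring, hence has no proper nonzero localizing subcategories, and this minimality transports back along the restriction functor $\sfD_{\mathrm{Sg}}(A)\to\sfD_{\mathrm{Sg}}(RG)$, which is fully faithful on the $\mathfrak p$-local $\mathfrak p$-torsion piece. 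Assembling (i), (ii), the support computation and the compact/localizing correspondence completes the proof.
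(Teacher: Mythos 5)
The paper does not prove Theorem~\ref{thm_me}; it is quoted directly from \cite{Stevensonclass} and used as a black box, so there is no in-paper proof to compare against. What you have written is a reconstruction of the argument of the cited reference, and your overall architecture is faithful to it: work with the compactly generated enlargement, realise an action via the Eisenbud/Gulliksen cohomology operators, invoke the Benson--Iyengar--Krause support machinery and its local-to-global principle, prove minimality of the stalk categories, and descend to compacts by Neeman--Thomason. You are also right that minimality is the genuine content and that the complete intersection hypothesis enters there.

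The description of the minimality step is where your sketch drifts from the actual argument, and as stated it does not work. The phrase ``$A$ acquires an isolated singularity after localizing at $\mathfrak p$'' is ill-posed: $\mathfrak p$ is a point of $\Proj$ of the operator ring (i.e.\ of $\mathbb{P}^{c-1}_k$), not of $\Spec A$, so there is no localization of $A$ at $\mathfrak p$ to speak of; the localization that occurs is at the level of the category via the functors $\Gamma_{\mathfrak p}$. Likewise the endgame ``the local category is generated by one object with graded division endomorphism ring'' and ``a harmless finite extension of $k$ to reduce to a closed point cut out by a linear form'' are not the mechanism used: in \cite{Stevensonclass} minimality of the stalks is obtained by identifying the $\mathfrak p$-local piece with a piece of a singularity category of a hypersurface and invoking Orlov's theorem relating the graded singularity category of a hypersurface to the bounded derived category of coherent sheaves on the corresponding projective variety, where the required classification of localizing subcategories is available; one does not need, and does not have, a single generator with graded division endomorphism ring. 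The ``restriction functor $\sfD_{\mathrm{Sg}}(A)\to\sfD_{\mathrm{Sg}}(RG)$'' you invoke also runs the wrong way for restriction of scalars. So: right global strategy, correct identification of the crux, but the specific content of the minimality step needs to be replaced by the Orlov-style hypersurface reduction before this would constitute a proof.
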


So we can apply Corollary~\ref{cor_comparison} to deduce the following theorem concerning the relative stable category.

\begin{thm}\label{thm_app}
Suppose $R$ is a local artinian complete intersection with residue field $k$ and $G$ is abelian. There are order preserving bijections
\begin{displaymath}
\Thick(\stbik RG / \mcK')
\xymatrix{ \ar[r]<1ex>^{\sigma} \ar@{<-}[r]<-1ex>_{\tau} &} \left\{
\begin{array}{c}
\text{specialisation closed} \\ \text{subsets of $\mathbb{P}^{c-1}_k \setminus \mathcal{V}$}
\end{array} \right\}
\end{displaymath}
where $c$ is the codimension of $RG$ and $\mathcal{V}$ is the closed subset of $\mathbb{P}^{c-1}_k$ corresponding to the module $kG$, i.e.\ its support.
\end{thm}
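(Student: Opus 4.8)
The plan is to transport the classification of Theorem~\ref{thm_me} across the equivalence $\phi$ of Theorem~\ref{thm_comparison} and then cut everything down by the thick subcategory $\mcK$. First I would invoke Corollary~\ref{cor_comparison}, which gives a lattice isomorphism between $\{\mcL \in \Thick(\stbik RG) \mid \mcK' \subseteq \mcL\}$ and $\{\mcM \in \Thick(\stmodu RG) \mid \mcK \subseteq \mcM\}$. Since $\mcK'$ is thick in $\stbik RG$ by Lemma~\ref{lem_K_thick}, the left-hand lattice is canonically identified with $\Thick(\stbik RG / \mcK')$ via the quotient functor; likewise $\{\mcM \mid \mcK \subseteq \mcM\} \cong \Thick(\stmodu RG / \mcK)$. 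So it suffices to describe $\Thick(\stmodu RG / \mcK)$.

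Next I would combine Theorem~\ref{thm_me} with the standard fact that, under a bijection between thick subcategories of $\mcT$ and specialisation closed subsets of a space $X$, the thick subcategories containing a fixed thick subcategory $\mcK$ correspond to the specialisation closed subsets containing $\s(\mcK)$. Writing $\mcV = \s(\mcK) = \Supp kG \subseteq \mathbb{P}^{c-1}_k$ — here using Lemma~\ref{lem_whatisK?} to identify $\mcK = \thick_{\stmodu RG}(kG)$ and noting that the support of a thick subcategory generated by a single object is the support of that object, which is closed since $kG$ is a finitely generated module — one obtains an order preserving bijection between $\{\mcM \in \Thick(\stmodu RG)\mid \mcK \subseteq \mcM\}$ and the specialisation closed subsets of $\mathbb{P}^{c-1}_k$ containing $\mcV$. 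Finally, taking complements gives a further order reversing bijection with the specialisation closed subsets of the open complement $\mathbb{P}^{c-1}_k \setminus \mcV$; composing with $\s$ yields an order preserving bijection after relabelling, and restricting the quotient functor $\Thick(\stmodu RG) \to \Thick(\stmodu RG/\mcK)$ along the bijection of Corollary~\ref{cor_comparison} produces the claimed $\s$, $\t$ between $\Thick(\stbik RG/\mcK')$ and the specialisation closed subsets of $\mathbb{P}^{c-1}_k \setminus \mcV$.

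The one point requiring a little care — and the step I would flag as the main obstacle — is the passage from $\Thick(\mcT)$ to $\Thick(\mcT/\mcK)$: one must check that, for a thick subcategory $\mcK$ of a triangulated category $\mcT$, the assignment $\mcL \mapsto \mcL/\mcK$ is a lattice isomorphism from $\{\mcL \in \Thick(\mcT)\mid \mcK \subseteq \mcL\}$ onto $\Thick(\mcT/\mcK)$, with inverse given by preimage under the Verdier quotient functor. This is a general fact (it rests on Verdier's localisation theory and the observation that thick subcategories of a Verdier quotient are precisely the images of thick subcategories of the total category lying over $\mcK$), but since the classification theorems we are quoting are phrased in terms of $\Thick(-)$ of the ambient category rather than a quotient, it is worth stating explicitly. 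Everything else is formal bookkeeping with the bijections of Theorem~\ref{thm_me} and Corollary~\ref{cor_comparison}, together with the identification $\mcV = \Supp kG$.
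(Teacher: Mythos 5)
Your proof takes essentially the same route as the paper's: use Corollary~\ref{cor_comparison} together with the lattice isomorphism $\Thick(\mcT/\mcK) \cong \{\mcL \in \Thick(\mcT) \mid \mcK \subseteq \mcL\}$ (you are right to flag this explicitly---the paper uses it silently) to reduce to thick subcategories of $\stmodu RG$ containing $\mcK$, apply Theorem~\ref{thm_me} and Lemma~\ref{lem_whatisK?} to identify those with specialisation closed subsets of $\mathbb{P}^{c-1}_k$ containing $\mathcal{V} = \Supp kG$, and then pass to the open complement. The overall structure is correct and is the argument the paper gives.

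The one genuine misstep is in how you describe that last passage. The bijection from specialisation closed subsets of $\mathbb{P}^{c-1}_k$ containing $\mathcal{V}$ to specialisation closed subsets of $\mathbb{P}^{c-1}_k \setminus \mathcal{V}$ is given by restriction to the open subspace, $W \mapsto W \cap (\mathbb{P}^{c-1}_k \setminus \mathcal{V}) = W \setminus \mathcal{V}$, with inverse $W' \mapsto W' \cup \mathcal{V}$; this is order preserving, as the statement of the theorem requires. Taking complements in $\mathbb{P}^{c-1}_k$, as you write, instead sends a specialisation closed subset containing $\mathcal{V}$ to a generization closed subset of $\mathbb{P}^{c-1}_k \setminus \mathcal{V}$, and these form a different lattice from the specialisation closed ones; the subsequent phrase about composing with $\sigma$ and ``relabelling'' does not repair this. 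Replace ``taking complements'' by ``restricting to the open complement'' and the proof is sound and matches the paper's.
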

\begin{proof}
Together Theorem~\ref{thm_me} and Corollary~\ref{cor_comparison} buy us that there is a bijection between the left hand side and specialisation closed subsets of $\mathbb{P}^{c-1}_k$ containing $\sigma\mcK$. Such subsets are the same as specialisation closed subsets of the complement of $\sigma\mcK$. It just remains to identify $\sigma\mcK$ with the support of $kG$. This is an immediate consequence of Lemma~\ref{lem_whatisK?}.
\end{proof}

Let us now consider a more concrete example to show that the lattice of thick subcategories of $\stbik RG$ can be quite large, especially in comparison to the spectrum which was computed in \cite{BCS}. Let us take $R=\ZZ/p^n\ZZ$ for our coefficient ring and let $G$ be an elementary abelian $p$-group of rank $r$. In this case $RG$ has codimension $r+1$. The $RG$-module $kG$ is clearly indecomposable and has complexity $1$ and so its support, the subset $\mathcal{V}$ of the theorem, is a single point $\lambda$. Thus one can embed the specialisation closed subsets of $\mathbb{P}^r_k \setminus \{\lambda\}$ into the lattice of thick subcategories of $\stbik RG$.

In particular, if we take $r=1$ and $n=2$, we have a copy of the specialisation closed subsets of $\mathbb{A}^1_k$ in $\Thick(\stbik RG)$. This is in rather stark contrast to the results of \cite{BCS} which show that the lattice of radical thick tensor ideals in $\stbik RG$ has $4$ elements.

\section{A semi-orthogonal decomposition for regular coefficients}\label{sec_regular}
We now consider the opposite extreme of coefficients, namely commutative noetherian rings of finite global dimension. In this case $\stbik RG$ admits a semi-orthogonal decomposition involving the stable category of Gorenstein projective (aka maximal Cohen-Macaulay) $RG$-modules, as was proved by Poulton \cite{Poulton}. This gives a precise sense in which $\stbik RG$ is an extension of the usual singularity category by certain modules of finite projective dimension. We give a complete proof of this theorem and then present some consequences which appear to be new.

Throughout this section $R$ denotes a regular ring of finite global dimension $d$, $G$ denotes a finite group, and $\iota$ denotes the inclusion $R\to RG$. Given these hypotheses $RG$ is $d$-Gorenstein. As in the previous section we will continue to abuse, without explicit mention, the isomorphism of functors $\iota^*\cong \iota^!$. 

\begin{lem}\label{lem_gproj_pres}
The functor $\iota^*$ sends modules of finite projective (injective) dimension to modules of finite projective (injective) dimension. Thus for all $M\in \Modu R$ we have $\pdim_{RG}\iota^*M \leq d$. It follows that $\iota_*$ sends Gorenstein projective (injective) $RG$-modules to Gorenstein projective (injective) $R$-modules, which are precisely the projective (injective) $R$-modules.
\end{lem}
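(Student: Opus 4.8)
The plan is to prove the three assertions in turn, using throughout that $RG$ is finitely generated free, hence flat, over $R$, so that $\iota^*$ is exact and carries projective $R$-modules to projective $RG$-modules. Given $M\in\Modu R$ of finite projective dimension, I would choose a finite projective resolution $P_\bullet\to M$ over $R$ and apply the exact functor $\iota^*$ to obtain a finite projective resolution $\iota^*P_\bullet\to\iota^*M$ over $RG$; thus $\pdim_{RG}\iota^*M\le\pdim_R M$. Since $\gldim R=d$, every $R$-module has projective dimension at most $d$, whence the bound $\pdim_{RG}\iota^*M\le d$. For the injective statement I would use the isomorphism $\iota^*\cong\iota^!$ afforded by $RG$ being a symmetric $R$-algebra: the coextension functor $\iota^!$ is again exact (as $RG$ is $R$-projective) and, being right adjoint to the exact functor $\iota_*$, preserves injectivity, so applying it to a finite injective resolution of $M$ gives $\idim_{RG}\iota^*M=\idim_{RG}\iota^!M\le\idim_R M\le d$.

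For the clause about $\iota_*$, I would argue directly from the syzygy description of Gorenstein projectivity. If $N$ is a Gorenstein projective $RG$-module it occurs as a syzygy in an acyclic complex $P^\bullet$ of projective $RG$-modules. The restriction functor $\iota_*$ is exact, so $\iota_*P^\bullet$ is still acyclic, and, $RG$ being free over $R$, each $\iota_*P^i$ is a projective $R$-module; hence $\iota_*N$ is a syzygy in an acyclic complex of projective $R$-modules, i.e.\ Gorenstein projective over $R$. (One could instead invoke criterion $(2)$ of the theorem above, via the adjunction isomorphism $\Ext^i_R(\iota_*N,L)\cong\Ext^i_{RG}(N,\iota^!L)$ together with the fact, just established, that $\iota^!L\cong\iota^*L$ has finite projective dimension over $RG$ whenever $L$ does over $R$.) The Gorenstein injective assertion is dual: since the left adjoint $\iota^*$ of $\iota_*$ is exact, $\iota_*$ sends injective $RG$-modules to injective $R$-modules, so restricting an acyclic complex of injective $RG$-modules exhibits $\iota_*N$ as Gorenstein injective over $R$.

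It then remains to see that the Gorenstein projective (resp.\ injective) $R$-modules are precisely the projective (resp.\ injective) ones, and this is where finiteness of $\gldim R$ is used. Over a ring of global dimension $d$ every acyclic complex of projectives is contractible — a short syzygy count shows each of its syzygy modules is the $d$-th syzygy of a module of projective dimension at most $d$, hence projective — so a Gorenstein projective $R$-module, being a syzygy of such a complex, is a direct summand of a projective and therefore projective; the injective case is dual. (Alternatively, one may simply quote from \cite{EnochsJenda} that a Gorenstein projective module of finite projective dimension is projective.) I do not anticipate a genuine obstacle; the only point requiring care is the bookkeeping of adjunctions and exactness hypotheses — specifically that $\iota^*$ is exact precisely because $RG$ is $R$-flat, and that it is this same flatness that forces $\iota_*$ to preserve injective modules.
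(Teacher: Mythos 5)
Your proof is correct. For the preservation of finite projective (injective) dimension by $\iota^*$, your reasoning — $\iota^*$ is exact, preserves projectives as a left adjoint of an exact functor, and preserves injectives as a right adjoint via $\iota^* \cong \iota^!$ — is the same as the paper's. Your primary argument for the clause about $\iota_*$ diverges from the paper's: you observe that $\iota_*$ is exact and carries projectives to projectives (and injectives to injectives), so it transports an acyclic complex of projective $RG$-modules witnessing $A \in \GProj RG$ to an acyclic complex of projective $R$-modules exhibiting $\iota_* A$ as Gorenstein projective over $R$, and then separately verify (by a syzygy/Schanuel count) that Gorenstein projectives over a ring of finite global dimension are projective. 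The paper instead invokes the $\Ext$-vanishing characterization of Gorenstein projectives together with the adjunction isomorphism $\Ext^i_{RG}(A, \iota^* M) \cong \Ext^i_R(\iota_* A, M)$ to conclude $\Ext^i_R(\iota_* A, M) = 0$ for every $R$-module $M$ and $i > 0$, which exhibits $\iota_* A$ as projective in one stroke — this is precisely the alternative you sketch in parentheses. Both routes are sound: the paper's is more economical, since the $\Ext$-vanishing against all $M$ simultaneously proves $\GProj R = \Proj R$ rather than quoting it, while your main route is more hands-on with the acyclic-complex definition at the cost of a separate contractibility argument.
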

\begin{proof}
The functor $\iota^*$ is exact and preserves projectives and injectives since it is both the left and right adjoint of an exact functor. Thus it sends projective (injective) resolutions to projective (injective) resolutions. Since $R$ is regular every module in the image of $\iota^*$ thus has finite projective (injective) dimension over $RG$.

For the last statement suppose $A \in \GProj RG$ and $M\in \Modu R$. We have seen above that $\iota^*M$ has finite projective dimension over $RG$. So for $i>0$ the groups $\Ext_{RG}^i(A, \iota^* M)$ vanish. Thus for $i>0$
\begin{displaymath}
0 = \Ext_{RG}^i(A, \iota^* M) \iso \Ext_R^i(\iota_* A, M)
\end{displaymath}
showing that $\iota_* A$ is in $\GProj R = \Proj R$. The statement for Gorenstein injectives is proved similarly.
\end{proof}

As usual we can consider $\GProj RG$ and $\GInj RG$ as full exact subcategories of $\Modu RG$. They are both Frobenius categories. We choose to focus on the case of Gorenstein projectives, so we can work with finitely generated modules, and we will state and prove our results for $\GProj RG$. However, it is worth noting that most of what we do is also, once suitably modified by taking duals, valid for Gorenstein injectives.

\begin{lem}\label{lem_gpi_split}
Every short exact sequence in $\GProj RG$ is $R$-split.
\end{lem}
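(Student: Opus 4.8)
The plan is to reduce the statement immediately to Lemma~\ref{lem_gproj_pres}. So I would begin with a short exact sequence $0\to A'\to A\to A''\to 0$ of $RG$-modules in which all three terms lie in $\GProj RG$. Since $\iota_*$ is exact (it is nothing but restriction of scalars along $\iota$), applying it produces a short exact sequence of $R$-modules
\begin{displaymath}
0\to \iota_*A' \to \iota_*A \to \iota_*A'' \to 0.
\end{displaymath}

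Next I would invoke Lemma~\ref{lem_gproj_pres}: it tells us that $\iota_*$ sends Gorenstein projective $RG$-modules to Gorenstein projective $R$-modules, and these are precisely the projective $R$-modules because $R$ is regular. In particular $\iota_*A''$ is a projective $R$-module.

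Finally, any epimorphism onto a projective module splits, so the surjection $\iota_*A\to \iota_*A''$ admits an $R$-linear section. By definition this means the sequence $0\to \iota_*A'\to \iota_*A\to \iota_*A''\to 0$ is split exact, i.e.\ the original sequence is $R$-split, as required.

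I do not anticipate any genuine obstacle here: all of the content resides in Lemma~\ref{lem_gproj_pres}, and the argument is then a one-line application of the lifting property of projectives. The only point to keep in mind is that the modules involved need not be finitely generated (we are working in $\GProj RG$ rather than $\Gproj RG$), but the splitting criterion for surjections onto a projective module holds for arbitrary projective modules, so this does not cause any difficulty.
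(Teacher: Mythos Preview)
Your proof is correct and follows essentially the same approach as the paper: apply $\iota_*$, use Lemma~\ref{lem_gproj_pres} to conclude the resulting sequence is a short exact sequence of projective $R$-modules, and observe that such a sequence splits. The paper's version is slightly terser but identical in content.
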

\begin{proof}
Suppose $0 \to A' \to A \to A'' \to 0$ is a short exact sequence of Gorenstein projective $RG$-modules. By the last lemma $0 \to \iota_* A' \to \iota_* A \to \iota_* A'' \to 0$ is an exact sequence of projective $R$-modules and so certainly splits.
\end{proof}

We will also need the following easy observation.

\begin{lem}\label{lem_cw_fpd}
If $X\in \modu RG$ is weakly projective then $\pdim_{RG} X < \infty$.
\end{lem}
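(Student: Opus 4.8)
The plan is to combine the description of weakly projective modules recalled in Section~\ref{sec_prelims} with the bound supplied by Lemma~\ref{lem_gproj_pres}. First I would recall that, by the explicit criterion stated in the preliminaries, an $RG$-module $X$ is weakly projective exactly when the counit $\iota^*\iota_* X\to X$ is a split epimorphism; equivalently, $X$ is a direct summand of $\iota^*\iota_* X$. If $X\in\modu RG$ then, since $RG$ is finite over $R$, the restriction $\iota_* X$ lies in $\modu R$, so this realises $X$ as a summand of a module in the image of $\iota^*$ applied to a finitely generated $R$-module.

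Next I would apply Lemma~\ref{lem_gproj_pres} to the $R$-module $M=\iota_* X$. That lemma, together with the regularity hypothesis $\gldim R=d$, gives $\pdim_{RG}\iota^*\iota_* X\leq d$. Finally, projective dimension does not increase on passing to a direct summand, so $\pdim_{RG}X\leq\pdim_{RG}\iota^*\iota_* X\leq d<\infty$, which is the claim (and in fact yields the uniform bound $\pdim_{RG}X\leq d$).

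I do not expect any genuine obstacle here: the argument is a two-line deduction from already-established facts. The only point meriting a moment's care is bookkeeping between $\modu$ and $\Modu$ — checking that restriction along the finite extension $\iota\colon R\to RG$ keeps us among finitely generated modules so that Lemma~\ref{lem_gproj_pres} and the summand description both apply in the finitely generated setting — but this is immediate.
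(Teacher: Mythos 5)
Your argument is correct and is essentially identical to the paper's: both use the characterisation of weak projectivity via the splitting of the counit $\iota^*\iota_*X\to X$ (so $X$ is a summand of $\iota^*\iota_*X$), then invoke Lemma~\ref{lem_gproj_pres} for the bound $\pdim_{RG}\iota^*\iota_*X\leq d$ and pass to the summand. The only cosmetic difference is that the paper cites \cite{BIK8}*{Theorem~2.6} directly for the counit criterion, whereas you appeal to the restatement of that fact in Section~\ref{sec_prelims}.
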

\begin{proof}
If $X$ is weakly projective then the counit $\iota^*\iota_*X \to X$ is a split epimorphism by \cite{BIK8}*{Theorem~2.6}. Thus
\begin{displaymath}
\pdim_{RG} X \leq \pdim_{RG} \iota^*\iota_* X \leq d,
\end{displaymath}
where $\iota^*\iota_*X$ has finite projective dimension by Lemma~\ref{lem_gproj_pres}.
\end{proof}

We now begin to compare $\bik RG$ to the Frobenius category $\Gproj RG$ of finitely generated Gorenstein projective $RG$-modules.

\begin{lem}\label{lem_gproj_inclusion}
The inclusion $\phi\colon \Gproj RG \to \bik RG$ is exact and sends projective-injective objects to projective-injective objects.
\end{lem}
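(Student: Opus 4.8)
The plan is to verify the two assertions separately, both being essentially immediate from the lemmas already established. First, exactness of $\phi$: we must check that $\phi$ sends conflations in $\Gproj RG$ to conflations in $\bik RG$. A conflation in $\Gproj RG$ is simply a short exact sequence of $RG$-modules with Gorenstein projective terms. By Lemma~\ref{lem_gpi_split} every such sequence is $R$-split, and an $R$-split short exact sequence of $RG$-modules is by definition a conflation in $\bik RG$. Since $\phi$ is just the identity on the underlying modules and morphisms, exactness follows with no further work.

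Second, we must show $\phi$ carries projective-injective objects to projective-injective objects. The projective-injective objects of $\Gproj RG$ are precisely the (honest) finitely generated projective $RG$-modules, while the projective-injective objects of $\bik RG$ are the weakly projective modules, i.e.\ those $M$ for which the counit $\iota^*\iota_* M \to M$ is a split epimorphism (\cite{BIK8}*{Theorem~2.6}). So it suffices to observe that every finitely generated projective $RG$-module $P$ is weakly projective. This is standard: $RG$ is finite free over $R$, so $\iota_* RG$ is a finitely generated free $R$-module, whence $\iota^*\iota_* RG = RG \otimes_R RG$ is a finitely generated free $RG$-module, and the counit $\iota^*\iota_* RG \to RG$ is a split epimorphism because $RG$ is projective; the case of a general finitely generated projective $P$ follows by taking a direct summand of a free module. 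Alternatively one can invoke directly that $RG$ is a symmetric $R$-algebra, so the free module $RG$ is weakly projective, and weak projectivity passes to summands and direct sums.

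Neither step presents any real obstacle; the content of the lemma is entirely in the supporting results Lemma~\ref{lem_gpi_split} and the characterisation of weakly projective modules, together with the elementary fact that projective $RG$-modules are weakly projective. The only mild subtlety worth spelling out is that ``exact functor'' here means a functor of exact categories preserving conflations, and that the exact structure on $\Gproj RG$ is the one it inherits as a full subcategory of $\Modu RG$ — so that a conflation there really is nothing more than a short exact sequence of modules — which is exactly what makes Lemma~\ref{lem_gpi_split} applicable verbatim.
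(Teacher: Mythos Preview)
Your proposal is correct and follows essentially the same approach as the paper: exactness via Lemma~\ref{lem_gpi_split}, and preservation of projective-injectives via the (immediate) observation that projective $RG$-modules are weakly projective. The paper's proof is just a two-line version of what you wrote.
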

\begin{proof}
By Lemma~\ref{lem_gpi_split} every exact sequence in $\Gproj RG$ is $R$-split so $\phi$ is exact. The projectives in $\Gproj RG$ are the $RG$-projectives and these are certainly weakly projective.
\end{proof}

\begin{rem}
The collections $\GProj RG$ and $\GInj RG$ form exact Frobenius subcategories of $\BIK RG$, the category of all $RG$-modules with the $R$-split exact structure, by essentially the same argument.
\end{rem}

Thus $\Gproj RG$ is an exact Frobenius subcategory of $\bik RG$ and so $\phi$ induces an exact functor
\begin{displaymath}
\underline{\phi} \colon \sGproj RG \to \stmod RG.
\end{displaymath}

It is clear that $\Gproj RG$ is closed under admissible extensions and kernels of admissible epimorphisms in $\modu RG$. However, it is not immediate that it is closed under cokernels of admissible monomorphisms. We show that this is the case\textemdash{}it follows from a standard result concerning Gorenstein projectives over $RG$.

\begin{prop}\label{prop_gproj_criterion}
Let $M$ be a finitely generated $RG$-module. Then $M$ is Gorenstein projective if and only if $\iota_*M$ is projective over $R$.
\end{prop}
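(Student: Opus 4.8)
The plan is to deduce both implications from the $\Ext$-characterisation of Gorenstein projectivity. One direction is already available: if $M$ is Gorenstein projective over $RG$ then $\iota_*M$ is projective over $R$ by Lemma~\ref{lem_gproj_pres} (the Gorenstein projective $R$-modules being precisely the projective ones since $R$ is regular). So the content is the converse.

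Assume $\iota_*M$ is projective over $R$. Because $M$ is finitely generated and $RG$ is $d$-Gorenstein, it suffices, by the characterisation of finitely generated Gorenstein projectives recalled in Section~\ref{sec_prelims}, to show that $\Ext^i_{RG}(M,P)=0$ for every projective $RG$-module $P$ and every $i\ge 1$. First I would handle the case $P=RG$. Since $RG\cong\iota^*R$ as $RG$-modules, the adjunction isomorphism already exploited in the proof of Lemma~\ref{lem_gproj_pres} (abusing $\iota^*\cong\iota^!$) gives, for all $i$,
\[
\Ext^i_{RG}(M,RG)\;\cong\;\Ext^i_{RG}(M,\iota^*R)\;\cong\;\Ext^i_R(\iota_*M,R),
\]
and the right-hand side vanishes for $i\ge 1$ because $\iota_*M$ is $R$-projective. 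To pass to an arbitrary projective $P$, write it as a direct summand of a free module $RG^{(I)}$; since $RG$ is noetherian, $M$ admits a resolution by finitely generated free $RG$-modules, so $\Ext^i_{RG}(M,-)$ commutes with arbitrary direct sums, whence $\Ext^i_{RG}(M,RG^{(I)})\cong\Ext^i_{RG}(M,RG)^{(I)}=0$ and $\Ext^i_{RG}(M,P)$, as a summand, also vanishes. Applying the characterisation theorem once more shows $M$ is Gorenstein projective.

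There is no genuine obstacle here: the argument just combines the $(\iota_*,\iota^!)$-adjunction, the self-duality $\iota^*\cong\iota^!$ of the symmetric algebra $RG$, regularity of $R$, and the fact that $\Ext$ out of a finitely generated module over a noetherian ring commutes with coproducts. The only points that need a moment's care are keeping the variance straight in the adjunction (i.e.\ using $\iota^!R\cong RG$, so that the hom-functor sits on the correct side) and the reduction from general projectives to $RG$; both are routine.
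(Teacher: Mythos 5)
Your proof is correct, and it takes a genuinely different and noticeably shorter route than the paper's. The paper reduces to the local case via localisation of $\Ext$ and then argues by induction on the Krull dimension of $R$: it picks a regular parameter $r$, passes to $R' = R/rR$, invokes the inductive hypothesis for $M/rM$ over $R'G$, and finishes with the long exact sequence for $0\to RG\stackrel{r}{\to}RG\to R'G\to 0$ and Nakayama. Your argument bypasses all of this by applying the adjunction isomorphism
\[
\Ext^i_{RG}\bigl(M,\iota^!N\bigr)\;\cong\;\Ext^i_R\bigl(\iota_*M,N\bigr),
\]
together with the symmetric-algebra identification $\iota^!R\cong\iota^*R\cong RG$, to compute $\Ext^i_{RG}(M,RG)\cong\Ext^i_R(\iota_*M,R)=0$ directly. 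This is exactly the same adjunction the paper already exploits in the proof of Lemma~\ref{lem_gproj_pres}, so your version is arguably the more natural companion to that lemma and makes the proposition an immediate two-line converse to it. The reduction from arbitrary projectives to $RG$ via the coproduct-commutation of $\Ext$ out of a finitely presented module is slightly more than needed (the paper's appeal to \cite{EnochsJenda}*{Corollary~11.5.3} reduces directly to testing against $RG$), but it is correct. The main thing the paper's localise-and-induct argument buys is independence from the symmetric-algebra hypothesis (it uses only that $R$-freeness of $M$ descends along $R\to R/rR$), whereas your proof leans on $\iota^*\cong\iota^!$; since the paper assumes this isomorphism throughout, that cost is zero here.
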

\begin{proof}
We have already seen in Lemma~\ref{lem_gproj_pres} that $M$ is Gorenstein projective only if $\iota_*M$ is projective.

So let us assume $\iota_*M$ is projective and prove $M$ is Gorenstein projective. It is enough, by \cite{EnochsJenda}*{Corollary~11.5.3}, to check that $\Ext^i_{RG}(M,RG) = 0$ for $i\geq 1$. For $\mathfrak{p}\in \Spec R$ we have isomorphisms
\begin{displaymath}
\Ext^i_{RG}(M,RG)_\mathfrak{p} \iso \Ext^i_{R_\mathfrak{p}G}(M_\mathfrak{p}, R_\mathfrak{p}G)
\end{displaymath}
so we may reduce to the case $R$ is local. We will prove the statement by induction on the dimension of the regular local ring $R$.

If $\dim R = 0$ then $R$ is a field and the statement is immediate. Suppose then the result holds for regular local rings of dimension strictly less than $d$ and let $\dim R = d>0$. As $R$ is regular we may choose a regular element $r\in \mathfrak{m}\setminus \mathfrak{m}^2$. As both $RG$ and $M$ are free over $R$ the element $r$ is also a non-zerodivisor on these modules. Set $R' = R/rR$ and $M' = M/rM$. By the inductive hypothesis $M'$ is a Gorenstein projective $R'G$-module so
\begin{displaymath}
\Ext^i_{RG}(M, R'G) \iso \Ext^i_{R'G}(M',R'G) = 0 \quad \text{for} \; i>0,
\end{displaymath}
where the isomorphism of $\Ext$ groups is a result of $M$ being acyclic for the base change functor $R'\otimes_R(-)$ as $M$ is $R$-free. Considering the long exact sequence for $\Ext$ coming from
\begin{displaymath}
0 \to RG \stackrel{r}{\to} RG \to R'G \to 0
\end{displaymath}
we see multiplication map $\Ext^i_{RG}(M,RG) \stackrel{r}{\to} \Ext^i_{RG}(M,RG)$ is surjective for $i>0$. Since these $\Ext$s are finitely generated $R$-modules they must vanish by Nakayama's lemma.
\end{proof}


\begin{cor}
The full subcategory $\Gproj RG$ is closed under cokernels of admissible monomorphisms in $\bik RG$.
\end{cor}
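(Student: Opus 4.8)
The plan is to reduce the statement to the criterion in Proposition~\ref{prop_gproj_criterion}, which characterises finitely generated Gorenstein projective $RG$-modules as exactly those $M$ for which $\iota_*M$ is projective over $R$. So suppose we are given an admissible monomorphism $0 \to A' \to A \to A'' \to 0$ in $\bik RG$ with $A', A \in \Gproj RG$; we must show $A'' \in \Gproj RG$, i.e.\ that $\iota_* A''$ is projective over $R$.

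First I would apply the forgetful functor $\iota_*$ to this short exact sequence. Since the sequence is admissible in $\bik RG$ it is by definition $R$-split, so we obtain a split short exact sequence $0 \to \iota_*A' \to \iota_*A \to \iota_*A'' \to 0$ of $R$-modules; in particular $\iota_*A''$ is a direct summand of $\iota_*A$. Now $A \in \Gproj RG$ means, by Proposition~\ref{prop_gproj_criterion}, that $\iota_*A$ is projective over $R$. A direct summand of a projective module is projective, so $\iota_*A''$ is projective over $R$. Applying Proposition~\ref{prop_gproj_criterion} in the other direction, $A''$ is Gorenstein projective, which is what we wanted.

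I do not expect any real obstacle here; the only point worth noting is that we genuinely need the $R$-split structure — it is precisely the admissibility of the monomorphism \emph{in $\bik RG$} (as opposed to in $\modu RG$) that makes $\iota_*A''$ a summand of $\iota_*A$ rather than merely a quotient, and for quotients one has no reason to expect projectivity to be inherited. (Conversely, by Lemma~\ref{lem_gpi_split} every short exact sequence of Gorenstein projectives is automatically $R$-split, so there is no loss in working in $\bik RG$; the corollary also shows that the admissible monomorphisms of $\bik RG$ with both outer terms Gorenstein projective have Gorenstein projective cokernels, which combined with the earlier remark that $\Gproj RG$ is closed under admissible extensions and kernels of admissible epimorphisms confirms that $\Gproj RG$ is an exact subcategory of $\bik RG$ in the strong sense.)
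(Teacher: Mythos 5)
Your argument is exactly the paper's: apply $\iota_*$, use $R$-splitness to see $\iota_*A''$ is a summand of the $R$-projective $\iota_*A$, and conclude via Proposition~\ref{prop_gproj_criterion}. Correct and essentially identical, just spelled out in more detail than the paper's one-line version.
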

\begin{proof}
This is immediate from the proposition: if $0\to A'\to A$ is $R$-split and $A',A$ are Gorenstein projective, hence $R$-projective, then the cokernel is also $R$-projective and hence Gorenstein projective.
\end{proof}

We next show $\underline{\phi}$ is an embedding.

\begin{prop}
The functor $\underline{\phi}$ is fully faithful.
\end{prop}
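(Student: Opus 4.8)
The plan is to show that $\underline{\phi}\colon \sGproj RG \to \stmod RG$ is fully faithful by computing morphism groups on both sides in terms of $\Ext$ groups in $\Modu RG$, and showing that the relevant relative $\Ext$ groups coincide with the absolute ones when the source is Gorenstein projective. Recall that in a Frobenius category, stable Hom groups are computed by $\underline{\Hom}(A,B)\iso \Ext^1(\Omega^{-1}A, B)$ using the syzygies internal to that Frobenius category; equivalently, for the relative structure $\underline{\Hom}_{\bik RG}(A,B)$ is the cokernel of $\Hom_{RG}(P^0,B)\to \Hom_{RG}(A,B)$ where $P^0 \to A$ is a weakly projective cover, and similarly on the Gorenstein side using an $RG$-projective cover $Q^0\to A$.

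First I would reduce to the statement that for $A, B \in \Gproj RG$ the natural map $\underline{\Hom}_{\sGproj RG}(A,B) \to \underline{\Hom}_{\stmod RG}(A,B)$ is an isomorphism, which is induced by the exact functor $\phi$. The key point is to compare the two different stable Hom descriptions. Take an $RG$-projective resolution $\cdots \to Q^1 \to Q^0 \to A \to 0$; since $A$ is Gorenstein projective, the syzygies $\Omega^i A$ are all Gorenstein projective. This resolution is also, by Lemma~\ref{lem_gpi_split}, an $R$-split exact sequence, hence it is an admissible exact sequence in $\bik RG$ built out of weakly projective objects (projectives are weakly projective by Lemma~\ref{lem_gproj_inclusion}). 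Therefore the same resolution computes the syzygies $\Omega_s^i A$ in $\bik RG$, so $\Omega_s^i A \iso \Omega^i A$ in the respective stable categories, compatibly with $\underline{\phi}$.

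The main technical step is then to identify $\underline{\Hom}_{\stmod RG}(A,B)$ with $\underline{\Hom}_{\sGproj RG}(A,B)$ directly. Using the syzygy $\Omega_s^{-1}$-shift formalism, both groups are expressed as the cokernel of $\Hom_{RG}(W, B) \to \Hom_{RG}(A,B)$, where on the Gorenstein side $W$ runs over maps factoring through finitely generated projectives and on the relative side over maps factoring through weakly projectives. A map $A \to B$ factors through a weakly projective module if and only if it factors through $\iota^*\iota_* A$, i.e.\ through a module of finite projective dimension (Lemma~\ref{lem_cw_fpd}). So it suffices to show: for $A, B \in \Gproj RG$, a morphism $f\colon A \to B$ factors through a finitely generated module of finite projective dimension if and only if it factors through a finitely generated projective. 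One direction is trivial. For the converse, suppose $f$ factors as $A \xrightarrow{g} X \xrightarrow{h} B$ with $\pdim_{RG} X \le d$. Take a finite projective resolution $0 \to P^d \to \cdots \to P^0 \to X \to 0$; since $A$ is Gorenstein projective, $\Ext^{\geq 1}_{RG}(A, P^i) = 0$, and a standard dimension-shifting / dévissage argument lets us lift $g\colon A \to X$ successively along each term of the resolution, ultimately showing $g$ (hence $f$) factors through $P^0$, a finitely generated projective. This factorisation lemma is the crux of the argument and where the Gorenstein projective hypothesis on $A$ is essential; everything else is formal manipulation of stable categories.

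To organise the write-up I would first record the factorisation lemma as a separate claim (a map out of a Gorenstein projective that factors through a module of finite projective dimension already factors through a projective), prove it by the dévissage above using part (2) of the $\Ext$-characterisation theorem together with Lemma~\ref{lem_gproj_pres}, and then deduce full faithfulness of $\underline{\phi}$ formally: faithfulness is immediate once we know the two notions of "zero in the stable category" agree, and fullness follows because any map $A \to B$ in $\stmod RG$ lifts to an honest module map $A \to B$ (as $A$ is Gorenstein projective, there are no correction terms needed — a map in $\stmod RG$ is just a module map modulo those factoring through weakly projectives, and every such map is already a module map between the Gorenstein projective representatives), which then represents a map in $\sGproj RG$ mapping to it.
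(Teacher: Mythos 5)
Your argument is correct, but it takes a genuinely different path from the paper's. The paper exploits that the \emph{target} $B$ is Gorenstein projective: by the cited result from \cite{BIK8}, a map $f\colon A\to B$ factoring through a weakly projective module must factor through the counit $\iota^*\iota_*B\to B$, and Lemma~\ref{lem_gproj_pres} shows $\iota_*B$ is $R$-projective so $\iota^*\iota_*B$ is honestly $RG$-projective. There is nothing left to prove. You instead work from the \emph{source} $A$: you observe that any weakly projective module through which $f$ can be forced to factor has finite projective dimension (Lemma~\ref{lem_cw_fpd}), and then use the $\Ext$-characterisation of Gorenstein projectives to lift $g\colon A\to X$ along a projective presentation $0\to X'\to P^0\to X\to 0$ (since $\Ext^1_{RG}(A,X')=0$ when $\pdim X'<\infty$). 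This is correct, though note a full dévissage is unnecessary—one lifting step already lands $g$ in $P^0$. Your route has the minor advantage of only needing the source to be Gorenstein projective, so it yields the slightly more general statement that for $A\in\Gproj RG$ and \emph{any} finitely generated $B$ the relative and absolute stable Hom groups out of $A$ agree; the paper's argument is shorter but requires the target to be Gorenstein projective. Both correctly reduce fullness to the formal observation that every stable map is represented by an honest module map, and faithfulness to the agreement of the two notions of stably trivial.
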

\begin{proof}
Let $A$ and $B$ be finitely generated Gorenstein projective $RG$-modules. We will show that $f\colon A\to B$ factors through a weakly projective module if and only if it factors through a projective module. Since projectives are weakly projective the `if' direction is clear.

Suppose $f$ factors via a weakly projective module. Then by \cite{BIK8}*{Lemma~5.4} it factors via the counit $\iota^*\iota_* B \to B$. By Lemma~\ref{lem_gproj_pres} the module $\iota_*B$ is $R$-projective, so $\iota^*\iota_*B$ is $RG$-projective and thus $f$ factors via a projective.
\end{proof}

Thus we can view $\sGproj RG$ as a thick subcategory of $\stmod RG$ via $\underline{\phi}$.  We next produce an adjoint for $\underline{\phi}$. In order to do so we will make use of the singularity category $\sfD_{\mathrm{Sg}}(RG)$ of $RG$ (see Definition~\ref{defn_sg}). 

We consider the composite
\begin{displaymath}
\psi = (\bik RG \to \sfD^b(\modu RG) \to \sfD_\mathrm{Sg}(RG) \stackrel{\sim}{\to} \sGproj RG ),
\end{displaymath}
where the final equivalence is given by Proposition~\ref{prop_sgequiv}. Of course every $R$-split exact sequence, i.e.\ every exact sequence in $\bik RG$, is exact in the abelian category $\modu RG$ and so gives a triangle in $\sfD^b(\modu RG)$. Thus $\psi$ sends $R$-split exact sequences to triangles in $\sGproj RG$. By Lemma~\ref{lem_cw_fpd} if $X\in \modu RG$ is weakly injective then $\pdim_{RG}X < \infty$ so $\psi(X) = 0$. Thus $\psi$ induces an exact functor
\begin{displaymath}
\underline{\psi} \colon \stmod RG \to \sGproj RG.
\end{displaymath}

\begin{rem}
It is essentially immediate from the construction that $\underline{\psi}\,\underline{\phi} \iso \id_{\sGproj R}$. Indeed, this composite is induced by
\begin{displaymath}
\Gproj RG \to \bik RG \to \sfD^b(\modu RG) \to \sfD_\mathrm{Sg}(RG) \stackrel{\sim}{\to} \sGproj RG
\end{displaymath}
which is just the usual projection from $\Gproj RG$ onto its stable category.
\end{rem}

Before proving that $\underline{\psi}$ is right adjoint to $\underline{\phi}$ we remind the reader of another fact from Gorenstein homological algebra.

\begin{thm}[\cite{EnochsJenda}*{Theorem~11.5.1}]
Every $RG$-module $M$ has a Gorenstein projective precover $A\to M$ fitting into a short exact sequence
\begin{displaymath}
0 \to L \to A \to M \to 0
\end{displaymath}
such that $L$ has finite projective dimension.
\end{thm}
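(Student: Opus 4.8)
The plan is to obtain this from two standard facts of Gorenstein homological algebra applied to the Iwanaga--Gorenstein ring $RG$ (Example~\ref{ex_RG}): first, that every $RG$-module has finite Gorenstein projective dimension (in fact at most $d$); and second, that a module of finite Gorenstein projective dimension admits such a special Gorenstein projective precover, constructed by ``folding up'' a truncated projective resolution.

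\emph{Bounding the Gorenstein projective dimension.} First I would record that an $RG$-module $L$ with $\pdim_{RG}L<\infty$ in fact has $\pdim_{RG}L\leq d$. Since $RG$ is noetherian it has injective dimension at most $d$ over itself and arbitrary direct sums of injectives are injective, so every projective $RG$-module has injective dimension at most $d$; if $\pdim_{RG}L=m\geq 1$, then truncating a projective resolution of $L$ at stage $m$ yields a nonzero class in $\Ext^{m}_{RG}(L,P)$ for the top projective term $P$ (otherwise that truncation would split, lowering $\pdim_{RG}L$), whence $m\leq d$. Granting this, let $M$ be any $RG$-module, choose a projective resolution, and let $A_0$ be its $d$-th syzygy. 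Dimension shifting gives $\Ext^{i}_{RG}(A_0,L)\cong\Ext^{i+d}_{RG}(M,L)=0$ for every $i\geq 1$ and every $L$ of finite projective dimension, so $A_0$ is Gorenstein projective by the characterisation of Gorenstein projective modules via $\Ext$-vanishing against modules of finite projective dimension recalled in Section~\ref{sec_prelims}. (For $M$ finitely generated there is a shortcut: restricting a projective resolution along $\iota_*$, which preserves projectives by Lemma~\ref{lem_gproj_pres}, exhibits $\iota_*A_0$ as a $d$-th $R$-syzygy of $\iota_*M$, hence projective since $\gldim R=d$, so $A_0$ is Gorenstein projective by Proposition~\ref{prop_gproj_criterion}.)

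\emph{Folding up the resolution.} So $M$ fits in an exact sequence $0\to A_0\to P_{d-1}\to\cdots\to P_0\to M\to 0$ with the $P_i$ projective and $A_0$ Gorenstein projective. I would cut this into short exact sequences $0\to\Omega^{i}M\to P_{i-1}\to\Omega^{i-1}M\to 0$ ($\Omega^{j}M$ the $j$-th syzygy, so $\Omega^{0}M=M$ and $\Omega^{d}M=A_0$) and apply the following reduction repeatedly. Suppose $0\to K\to B\to Z\to 0$ is exact with $B$ Gorenstein projective and $\pdim_{RG}K<\infty$, and $0\to Z\to P\to Z'\to 0$ is exact with $P$ projective; splicing gives a four-term exact sequence $0\to K\to B\xrightarrow{\varphi}P\to Z'\to 0$. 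Choose an embedding $B\hookrightarrow Q$ into a projective module with Gorenstein projective cokernel $C$ (possible because $B$ is Gorenstein projective), and form the pushout $V$ of the span $Q\hookleftarrow B\xrightarrow{\varphi}P$. Then $V$ sits in $0\to P\to V\to C\to 0$ and in $0\to K'\to V\to Z'\to 0$ with $K'\cong Q/K$; since Gorenstein projectives are closed under extensions, $V$ is Gorenstein projective, and $\pdim_{RG}K'\leq\pdim_{RG}K+1<\infty$ because $K'$ is a quotient of the projective $Q$ by a module of finite projective dimension. Starting from the trivial sequence $0\to 0\to A_0\to A_0\to 0$ and feeding in the $d$ short exact sequences above in turn, after $d$ applications one arrives at an exact sequence $0\to L\to A\to M\to 0$ with $A$ Gorenstein projective and $\pdim_{RG}L\leq d-1$.

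\emph{The precover property and the main obstacle.} Finally, $A\to M$ is automatically a Gorenstein projective precover: for any Gorenstein projective $A'$ and any map $A'\to M$, the obstruction to lifting it along $A\to M$ lies in $\Ext^{1}_{RG}(A',L)$, which vanishes since $A'$ is Gorenstein projective and $L$ has finite projective dimension. I expect the folding reduction to be the delicate step: one must check that the two pushout diagrams splice together as claimed, verify that the new middle term remains Gorenstein projective (via closure of $\GProj RG$ under extensions, which itself rests on the $\Ext$-characterisation), and track that the projective dimension of the ``error'' module stays finite. The remaining ingredients---dimension shifting, closure of $\GProj RG$ under extensions, and the fact that every Gorenstein projective module embeds into a projective module with Gorenstein projective cokernel---are standard and recorded in \cite{EnochsJenda}.
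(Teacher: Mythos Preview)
The paper does not prove this statement at all: it is quoted verbatim from \cite{EnochsJenda}*{Theorem~11.5.1} as a black box and then used. So there is no ``paper's own proof'' to compare against. Your argument is correct and is essentially the standard one found in that reference (and in Holm's treatment): over an Iwanaga--Gorenstein ring one first shows every module has finite Gorenstein projective dimension by checking that a high enough syzygy is Gorenstein projective, and then one manufactures the special precover by the iterated pushout (``folding'') construction you describe.

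One small remark on the first step: to conclude $\Ext^{i+d}_{RG}(M,L)=0$ for $i\geq 1$ and $L$ of finite projective dimension you need $\idim_{RG}L\leq d$, not merely $\pdim_{RG}L\leq d$. You argued the latter but used the former. This is harmless, since exactly the same reasoning you gave (projectives have injective dimension at most $d$, and $L$ has a finite projective resolution, so induct along the short exact sequences) yields $\idim_{RG}L\leq d$ as well; but it is worth stating explicitly. Everything else---the pushout bookkeeping, closure of $\GProj RG$ under extensions, and the automatic precover property from $\Ext^1$-vanishing---is handled correctly.
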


We would like to use this result to exhibit the triangles corresponding to the claimed adjunction between $\underline{\phi}$ and $\underline{\psi}$. However, the short exact sequence in the above theorem is not necessarily $R$-split, i.e.\ not necessarily exact in $\BIK RG$. This is relatively easily fixed as we show in the next lemma.

\begin{lem}\label{lem_gproj_splitcover}
Let $M$ be an $RG$-module and $A\to M$ a Gorenstein projective precover as in the theorem. Then the epimorphism $A\oplus \iota^*\iota_*M \to M$, induced by the precover and the counit, fits into an $R$-split exact sequence
\begin{displaymath}
0 \to L' \to A\oplus \iota^*\iota_* M \to M \to 0
\end{displaymath}
where $\pdim_{RG}L' < \infty$.
\end{lem}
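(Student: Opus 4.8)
The plan is to realise the desired sequence with the evident map. Write $p\colon A\to M$ for the given Gorenstein projective precover, so that $0\to L\to A\xrightarrow{p}M\to 0$ is exact with $\pdim_{RG}L<\infty$, and let $\varepsilon\colon\iota^*\iota_*M\to M$ denote the counit of the adjunction $(\iota^*,\iota_*)$. Let $q\colon A\oplus\iota^*\iota_*M\to M$ be the map sending $(a,x)$ to $p(a)+\varepsilon(x)$, and set $L'=\ker q$. Since $p$ is already surjective, $q$ is surjective, so we obtain a short exact sequence of $RG$-modules $0\to L'\to A\oplus\iota^*\iota_*M\xrightarrow{q}M\to 0$. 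It then remains to check two things: that this sequence is $R$-split, and that $\pdim_{RG}L'<\infty$.

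For $R$-splitness I would invoke the standard fact that the counit $\varepsilon\colon\iota^*\iota_*M=RG\otimes_R\iota_*M\to M$ is a split epimorphism of $R$-modules, a retraction being $m\mapsto 1\otimes m$; this is essentially the reason objects in the image of $\iota^*$ are weakly projective. Choosing such an $R$-linear section $s\colon M\to\iota^*\iota_*M$ of $\varepsilon$, the map $(0,s)\colon M\to A\oplus\iota^*\iota_*M$ is an $R$-linear section of $q$. By definition this means the sequence $0\to L'\to A\oplus\iota^*\iota_*M\to M\to 0$ is $R$-split.

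For the finiteness of $\pdim_{RG}L'$, the point is to project $L'$ onto the \emph{second} summand. The $RG$-linear map $L'\to\iota^*\iota_*M$, $(a,x)\mapsto x$, is surjective, since given $x$ one may use surjectivity of $p$ to choose $a$ with $p(a)=-\varepsilon(x)$; and its kernel is $\{(a,0)\;:\;p(a)=0\}$, which is identified with $L$. Hence there is a short exact sequence $0\to L\to L'\to\iota^*\iota_*M\to 0$. By hypothesis $\pdim_{RG}L<\infty$, and $\pdim_{RG}\iota^*\iota_*M\le d$ by Lemma~\ref{lem_gproj_pres} applied to the $R$-module $\iota_*M$ (which has projective dimension at most $d$ since $R$ is regular of global dimension $d$). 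Therefore $\pdim_{RG}L'\le\max\{\pdim_{RG}L,\,d\}<\infty$, as required.

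The one step I would flag as the potential pitfall is precisely the choice of which summand to project $L'$ onto. Projecting onto $A$ would instead give an extension $0\to\ker\varepsilon\to L'\to A\to 0$, and $\ker\varepsilon$ need not have finite projective dimension over $RG$: from $0\to\ker\varepsilon\to\iota^*\iota_*M\to M\to 0$ with $\iota^*\iota_*M$ of finite projective dimension, finiteness of $\pdim_{RG}\ker\varepsilon$ would force $\pdim_{RG}M<\infty$, which fails in general (for instance for the trivial module in the modular case). Projecting onto $\iota^*\iota_*M$ sidesteps this entirely, because its kernel is the module $L$, whose projective dimension is finite by assumption.
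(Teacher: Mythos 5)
Your proof is correct, and it takes a genuinely different route from the paper's. The paper handles the finiteness of $\pdim_{RG}L'$ by passing to the singularity category: since $\iota^*\iota_*M$ is perfect, the map $A\oplus\iota^*\iota_*M\to M$ agrees with $A\to M$ in $\sfD_{\mathrm{Sg}}(RG)$, the latter is an isomorphism there, and so the mapping cone $\S L'$ is perfect. You instead argue entirely inside $\modu RG$: you note that projecting $L'=\ker q$ onto the second summand is surjective (using surjectivity of $p$) with kernel exactly $L$, giving the short exact sequence $0\to L\to L'\to\iota^*\iota_*M\to 0$, and then conclude $\pdim_{RG}L'\le\max\{\pdim_{RG}L,d\}<\infty$. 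Your argument is more elementary and self-contained, avoiding the detour through $\sfD_{\mathrm{Sg}}(RG)$; the paper's argument is shorter to write given that the singularity-category machinery is already in play, and fits the paper's overall narrative. Your observation that one must project onto the $\iota^*\iota_*M$ summand rather than $A$, together with the explanation of why the other projection would fail, is a worthwhile sanity check and correctly identifies the only place a careless reader could go wrong.
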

\begin{proof}
The map $A\oplus \iota^*\iota_*M \to M$ is $R$-split since $\iota^*\iota_*M \to M$ is $R$-split. Thus the only real content to the statement is that the kernel has finite projective dimension. Since $\iota^*\iota_*M$ has finite projective dimension (by Lemma~\ref{lem_cw_fpd}) the morphisms $A\oplus \iota^*\iota_*M \to M$ and $A\to M$ agree in the singularity category. Since $A\to M$ is an isomorphism in the singularity category we see that $A\oplus \iota^*\iota_*M \to M$ must also be an isomorphism in the singularity category. Hence the mapping cone in $\sfD^b(\modu RG)$, namely $\S L'$, is perfect so $L'$ has finite projective dimension.
\end{proof}

\begin{lem}\label{lem_gproj_orthog}
Let $M\in \Modu RG$ have finite projective dimension. Then for every $A\in \GProj RG$ we have $\Stmod(A,M) = 0$.
\end{lem}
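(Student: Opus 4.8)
The plan is to prove the stronger, unstable statement that every $RG$-module homomorphism $f\colon A\to M$ with $A$ Gorenstein projective and $\pdim_{RG}M<\infty$ factors through a weakly projective module; the lemma then follows at once, since $\Stmod(A,M)$ is by definition the quotient of $\Hom_{RG}(A,M)$ by the maps factoring through weakly projectives. The weakly projective module I would use is $\iota^*\iota_*M$ together with its counit $\varepsilon\colon\iota^*\iota_*M\to M$. Two elementary observations get things going: $\varepsilon$ is an epimorphism of $RG$-modules (it is already surjective on the nose, as $1\otimes m\mapsto m$), and $\iota^*\iota_*M$ lies in the essential image of $\iota^*$ and is therefore weakly projective. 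Setting $K=\ker\varepsilon$ gives a short exact sequence $0\to K\to\iota^*\iota_*M\to M\to 0$ of $RG$-modules.

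The crux is to see that $K$ has finite projective dimension over $RG$. Since $R$ is regular of global dimension $d$, the underlying $R$-module $\iota_*M$ has $\pdim_R\iota_*M\leq d$, so Lemma~\ref{lem_gproj_pres} yields $\pdim_{RG}\iota^*\iota_*M\leq d$. Combining this with the hypothesis $\pdim_{RG}M<\infty$ and the short exact sequence above forces $\pdim_{RG}K<\infty$. I would then apply $\Hom_{RG}(A,-)$ to that sequence: because $A$ is Gorenstein projective we have $\Ext^1_{RG}(A,K)=0$ by the $\Ext$-vanishing characterisation of Gorenstein projectives recalled in Section~\ref{sec_prelims} (condition~(2)), and hence $\Hom_{RG}(A,\iota^*\iota_*M)\to\Hom_{RG}(A,M)$ is surjective. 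Thus $f$ lifts along $\varepsilon$, hence factors through the weakly projective module $\iota^*\iota_*M$, so $f=0$ in $\Stmod RG$.

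I do not expect a real obstacle here: once the right weakly projective approximation $\iota^*\iota_*M\to M$ of $M$ is in hand the whole thing reduces to a single $\Ext^1$ computation. The two points meriting care are that $A$ and $M$ need not be finitely generated---so one must invoke the characterisation of Gorenstein projectivity that is valid for arbitrary modules (condition~(2) rather than~(3))---and that the passage from $\pdim_R\iota_*M$ to $\pdim_{RG}\iota^*\iota_*M$, i.e.\ not conflating $\Ext$ over $R$ with $\Ext$ over $RG$, is precisely what Lemma~\ref{lem_gproj_pres} supplies. An alternative route would be to embed $A$ into a projective $RG$-module with Gorenstein projective cokernel $C$, using that $A$ occurs as a syzygy in an acyclic complex of projectives, and then lift $f$ using $\Ext^1_{RG}(C,M)=0$; I prefer the counit argument as it sidesteps the structure theory of Gorenstein projective modules.
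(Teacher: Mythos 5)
Your proof is correct, but it takes a different route from the paper's. The paper covers $M$ by an honest projective: choosing a short exact sequence $0\to M'\to P\to M\to 0$ with $P$ projective, the syzygy $M'$ automatically has finite projective dimension because $M$ does, and then $\Ext^1_{RG}(A,M')=0$ gives surjectivity of $\Hom_{RG}(A,P)\to\Hom_{RG}(A,M)$, so every map from $A$ factors through the projective (in particular weakly projective) $P$. You instead cover $M$ by the counit $\iota^*\iota_*M\to M$, which is the ``relatively'' natural choice of weakly projective approximation, and you must then spend an extra step justifying that the kernel $K$ has finite projective dimension: this uses Lemma~\ref{lem_gproj_pres} to get $\pdim_{RG}\iota^*\iota_*M\le d$ and then the two-out-of-three property in the short exact sequence. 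Both arguments end the same way, with $\Ext^1_{RG}(A,-)=0$ against a module of finite projective dimension by condition~(2). The paper's choice of $P$ is a little leaner because the finiteness of $\pdim_{RG}M'$ is immediate; yours highlights the role of the counit and stays entirely inside the weakly projective machinery of the relative stable category, which some readers may find more in the spirit of Section~\ref{sec_regular}. Your parenthetical alternative (embedding $A$ in a projective with Gorenstein projective cokernel and lifting along $\Ext^1_{RG}(C,M)=0$) is a third valid route, dual to the paper's in that it manipulates $A$ rather than $M$.
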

\begin{proof}
Since $M$ has finite projective dimension we can choose an exact sequence
\begin{displaymath}
0 \to M' \to P \to M \to 0
\end{displaymath}
where $P$ is projective and $M'$ has finite projective dimension. Applying $\Hom_{RG}(A,-)$ yields an exact sequence
\begin{displaymath}
0 \to \Hom_{RG}(A,M') \to \Hom_{RG}(A,P) \to \Hom_{RG}(A,M) \to 0
\end{displaymath}
since $\Ext^1_{RG}(A,M')=0$ by virtue of $M'$ having finite projective dimension. Thus every map from $A$ to $M$ factors via the weakly projective module $P$ and so $\Stmod(A,M) = 0$ as claimed.
\end{proof}

\begin{thm}[\cite{Poulton}*{Theorem~3.5}]\label{thm_adjoint}
The functor $\underline{\psi}\colon \stmod RG \to \sGproj RG$ is right adjoint to the fully faithful inclusion $\underline{\phi}$.
\end{thm}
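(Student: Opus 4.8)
The plan is to exhibit $\underline{\phi}$ as the inclusion of a coreflective subcategory of $\stmod RG$ with coreflection $\underline{\psi}$. Concretely, for each $M\in\stmod RG$ I will build a triangle
\begin{displaymath}
\underline{\phi}\bar A\xrightarrow{\ \bar\varepsilon\ }M\to N\to\S\underline{\phi}\bar A
\end{displaymath}
in $\stmod RG$ with $\bar A\in\sGproj RG$ and $N$ of finite projective dimension over $RG$, prove that $\bar A\cong\underline{\psi}M$, and then assemble the $\bar\varepsilon$ into a natural isomorphism $\Hom_{\stmod RG}(\underline{\phi}A,M)\cong\Hom_{\sGproj RG}(A,\underline{\psi}M)$, which is precisely the assertion $\underline{\phi}\dashv\underline{\psi}$.

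\textbf{The coreflection triangle.}
Fix $M\in\stmod RG$. Lemma~\ref{lem_gproj_splitcover} produces an $R$-split exact sequence $0\to L'\to A\oplus\iota^*\iota_*M\to M\to0$ with $A$ finitely generated Gorenstein projective and $\pdim_{RG}L'<\infty$. Since $\iota^*\iota_*M$ is weakly projective it is a zero object of $\stmod RG$, so the triangle attached to this sequence becomes $\underline{\phi}\bar A\xrightarrow{\bar\varepsilon}M\to\S L'\to\S\underline{\phi}\bar A$, where $\bar A$ denotes $A$ viewed in $\sGproj RG$; thus $N:=\S L'$ is the cone of $\bar\varepsilon$. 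Finitely generated $RG$-modules of finite projective dimension are closed under $\S^{\pm1}$ (apply Lemma~\ref{lem_gproj_pres} to $0\to X\to\iota^*\iota_*X\to\S X\to0$ and Lemma~\ref{lem_cw_fpd} to $0\to\S^{-1}X\to Q\to X\to0$), so every shift $\S^nL'$ has finite projective dimension, and hence Lemma~\ref{lem_gproj_orthog} gives $\Hom_{\stmod RG}(\underline{\phi}A,\S^nL')=0$ for all $n$ and all $A\in\sGproj RG$. Feeding the triangle into $\Hom_{\stmod RG}(\underline{\phi}A,-)$ then shows $\bar\varepsilon$ induces an isomorphism $\bar\varepsilon_*\colon\Hom_{\stmod RG}(\underline{\phi}A,\underline{\phi}\bar A)\xrightarrow{\sim}\Hom_{\stmod RG}(\underline{\phi}A,M)$.

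\textbf{Identifying the coreflection and assembling the adjunction.}
Apply the exact functor $\underline{\psi}$ to the triangle. As $\S L'$ has finite projective dimension it is a perfect complex, hence zero in $\sfD_\mathrm{Sg}(RG)$, so $\underline{\psi}(\S L')=0$ and $\underline{\psi}\bar\varepsilon\colon\underline{\psi}\,\underline{\phi}\bar A\to\underline{\psi}M$ is an isomorphism; composing with the natural isomorphism $\eta\colon\id_{\sGproj RG}\xrightarrow{\sim}\underline{\psi}\,\underline{\phi}$ of the Remark recording $\underline{\psi}\,\underline{\phi}\cong\id$ identifies $\bar A$ with $\underline{\psi}M$. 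Now define
\begin{displaymath}
\Theta_{A,M}\colon\Hom_{\stmod RG}(\underline{\phi}A,M)\longrightarrow\Hom_{\sGproj RG}(A,\underline{\psi}M),\qquad f\longmapsto\underline{\psi}(f)\circ\eta_A,
\end{displaymath}
which is automatically natural in $A$ and $M$ by functoriality of $\underline{\psi}$ and naturality of $\eta$. For $M=\underline{\phi}C$ it is bijective: full faithfulness of $\underline{\phi}$ writes $f=\underline{\phi}h$ uniquely, and naturality of $\eta$ gives $\Theta(\underline{\phi}h)=\eta_C\circ h$, a composite of bijections. For general $M$, naturality in $M$ yields $\Theta_{A,M}\circ\bar\varepsilon_*=(\underline{\psi}\bar\varepsilon)_*\circ\Theta_{A,\underline{\phi}\bar A}$; here $\bar\varepsilon_*$ is an isomorphism by the previous paragraph, $(\underline{\psi}\bar\varepsilon)_*$ is an isomorphism since $\underline{\psi}\bar\varepsilon$ is, and $\Theta_{A,\underline{\phi}\bar A}$ is bijective by the case just treated, so $\Theta_{A,M}$ is bijective. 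Thus $\Theta$ is a natural isomorphism, i.e.\ $\underline{\psi}$ is right adjoint to $\underline{\phi}$.

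\textbf{Main obstacle.}
The genuine content is entirely contained in the preparatory lemmas already established: Lemma~\ref{lem_gproj_splitcover}, which replaces a Gorenstein projective precover by an $R$-split surjection with finite-projective-dimension kernel, and Lemma~\ref{lem_gproj_orthog}, the orthogonality between Gorenstein projectives and finite projective dimension. Granting these, the only delicate point is the coherence of the identification of the coreflection with $\underline{\phi}\,\underline{\psi}M$ and the naturality in $M$ of the hom-set comparison; the argument above sidesteps this by building the comparison map $\Theta$ directly out of the functor $\underline{\psi}$, so that naturality is free and only bijectivity needs the triangle.
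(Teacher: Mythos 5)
Your proof is correct and follows essentially the same route as the paper: both use Lemma~\ref{lem_gproj_splitcover} to produce the localization triangle and Lemma~\ref{lem_gproj_orthog} for the orthogonality, then identify the coreflection with $\underline{\psi}M$ by applying $\underline{\psi}$ to that triangle. The only difference is that the paper outsources the final steps to Bondal's lemma and a "standard argument," whereas you explicitly construct the adjunction isomorphism $\Theta$ and verify its naturality and bijectivity by hand, which is a harmless elaboration (your extra observation that finite projective dimension is preserved under $\S^{\pm1}$ is also unnecessary, since $\sGproj RG$ is already closed under shifts, so degree-zero orthogonality suffices).
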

\begin{proof}
Let $M$ be an object of $\stmod RG$. By Lemma~\ref{lem_gproj_splitcover} we can find a triangle
\begin{displaymath}
\underline{\phi}A \to M \to L \to \S A
\end{displaymath}
in $\stmod RG$ where $L$ is isomorphic to a module of finite projective dimension. By the last lemma $L$ is thus in $\sGproj RG^\perp$ and by abstract nonsense it follows that $\underline{\phi}$ has a right adjoint given by sending $M$ to $A$ (the abstract statement can be found, for instance, in \cite{BondalReps}*{Lemma~3.1}). Since $L$ has finite projective dimension applying $\underline{\psi}$ to this triangle gives $A\iso \underline{\psi}\, \underline{\phi}A \iso \underline{\psi}M$. A standard argument then shows that the right adjoint we have produced by nonsense is isomorphic to $\underline{\psi}$. 
\end{proof}

We can easily determine the quotient of $\stmod RG$ by $\sGproj RG$. 

\begin{lem}\label{lem_kerneldescription}
There are equivalences of triangulated categories
\begin{displaymath}
\stmod RG / \sGproj RG \iso \sGproj RG^\perp = \ker\underline{\psi} = \stmod_\mathrm{fpd} RG,
\end{displaymath}
where
\begin{displaymath}
\stmod_\mathrm{fpd} RG = \{ M\in \stmod RG \; \vert \; \pdim_{RG} M < \infty\}.
\end{displaymath}
\end{lem}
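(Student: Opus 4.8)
The plan is to leverage the semi-orthogonal picture already established: $\underline{\phi}\colon \sGproj RG \to \stmod RG$ is a fully faithful embedding (admitting a right adjoint $\underline{\psi}$ by Theorem~\ref{thm_adjoint}), so $\sGproj RG$ is a thick subcategory of $\stmod RG$, and the quotient $\stmod RG / \sGproj RG$ is a standard object to identify. The three things to prove are: (i) $\ker\underline{\psi} = \stmod_\mathrm{fpd} RG$; (ii) $\sGproj RG^\perp = \ker\underline{\psi}$; and (iii) the quotient $\stmod RG/\sGproj RG$ is equivalent to this common subcategory.

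First I would handle (i). For the inclusion $\stmod_\mathrm{fpd} RG \subseteq \ker\underline{\psi}$: if $M$ has finite projective dimension then, viewed in $\sfD^\mathrm{b}(\modu RG)$, it is perfect, hence zero in $\sfD_\mathrm{Sg}(RG)$, so $\underline{\psi}M \iso 0$ by the very definition of $\psi$ as the composite through the singularity category. Conversely, if $\underline{\psi}M \iso 0$ then $M$ becomes zero in $\sfD_\mathrm{Sg}(RG)$, i.e.\ $M$ is perfect in $\sfD^\mathrm{b}(\modu RG)$; since $R$ (hence $RG$) is noetherian and $M$ is an honest module, being perfect is equivalent to having finite projective dimension, so $M\in\stmod_\mathrm{fpd} RG$. (One should note that, conveniently, $\stmod_\mathrm{fpd} RG$ as defined contains the weakly projective modules by Lemma~\ref{lem_cw_fpd}, so it is a nonzero thick subcategory of $\stmod RG$ in its own right and the identification makes sense.)

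Next, (ii): the inclusion $\sGproj RG^\perp \subseteq \ker\underline{\psi}$ is immediate because $\underline{\psi}$ is right adjoint to $\underline{\phi}$, so for $M\in\sGproj RG^\perp$ and any $A$ one has $\Hom(A,\underline{\psi}M)\iso\Hom(\underline{\phi}A,M)=0$, forcing $\underline{\psi}M\iso 0$. For the reverse, $\ker\underline{\psi}\subseteq\sGproj RG^\perp$, combine (i) with Lemma~\ref{lem_gproj_orthog}: an object of $\ker\underline{\psi}$ has finite projective dimension, and Lemma~\ref{lem_gproj_orthog} says exactly that $\Stmod(A,M)=0$ for every $A\in\GProj RG$ when $\pdim_{RG}M<\infty$, which is membership in $\sGproj RG^\perp$. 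Finally, (iii) is the formal consequence of having a semi-orthogonal decomposition: the right adjoint $\underline{\psi}$ to the fully faithful $\underline{\phi}$ gives, for each $M$, a triangle $\underline{\phi}\,\underline{\psi}M \to M \to L \to \S\underline{\phi}\,\underline{\psi}M$ with $L\in\sGproj RG^\perp$ (this is the triangle used in the proof of Theorem~\ref{thm_adjoint}), and standard Bousfield-localization/Verdier-quotient nonsense (e.g.\ \cite{NeeCat}) then identifies $\stmod RG/\sGproj RG$ with $\sGproj RG^\perp$ via the composite $\sGproj RG^\perp \hookrightarrow \stmod RG \to \stmod RG/\sGproj RG$.

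The only step requiring any genuine care is the converse half of (i)---showing that $\underline{\psi}M\iso 0$ forces finite projective dimension---since a priori vanishing in the singularity category only says $M$ is perfect as a complex, and one must invoke that for a module over a noetherian ring perfection coincides with finite projective dimension; but this is classical and essentially built into Definition~\ref{defn_sg}. Everything else is a direct assembly of Theorem~\ref{thm_adjoint}, Lemma~\ref{lem_gproj_orthog}, and the formalism of semi-orthogonal decompositions, so I expect no real obstacle.
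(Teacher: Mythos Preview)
Your proposal is correct and follows essentially the same approach as the paper, which dismisses the first two equivalences as standard consequences of the adjunction $\underline{\phi}\dashv\underline{\psi}$ and handles $\ker\underline{\psi} = \stmod_\mathrm{fpd} RG$ by the same observation about the singularity category (together with the remark that isomorphism in $\stmod RG$ cannot turn infinite projective dimension into finite). The only harmless redundancy is invoking Lemma~\ref{lem_gproj_orthog} for the inclusion $\ker\underline{\psi}\subseteq\sGproj RG^\perp$: this too follows formally from the adjunction, since $\Hom(\underline{\phi}A,M)\iso\Hom(A,\underline{\psi}M)=0$ whenever $\underline{\psi}M\iso 0$.
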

\begin{proof}
The first two equivalences are standard and follow from the existence of the right adjoint $\underline{\psi}$ of $\underline{\phi}$. 
To see the third we just note that, by definition, the kernel of $\underline{\psi}$ is precisely the class of modules of finite projective dimension; there is no need to close under isomorphism as one easily checks that a module of infinite projective dimension cannot become isomorphic to a module of finite projective dimension in $\stmod RG$. 
\end{proof}

The following corollary to Theorem~\ref{thm_adjoint} summarises what we have proved about the semi-orthogonal decomposition so far.

\begin{cor}\label{cor_adjoint}
Let $R$ be a commutative noetherian ring of finite global dimension and $G$ a finite group. Then there is a semi-orthogonal decomposition
\begin{displaymath}
\xymatrix{
\sGproj RG \ar[r]<0.75ex>^-{\underline{\phi}} \ar@{<-}[r]<-0.75ex>_-{\underline{\psi}} & \stmod RG \ar[r]<0.75ex>^-{j^*} \ar@{<-}[r]<-0.75ex>_-{j_*}& \stmod_\mathrm{fpd} RG
}
\end{displaymath}
where $\stmod_\mathrm{fpd}RG$ is the full subcategory consisting of finitely generated modules of finite projective dimension over $RG$.
\end{cor}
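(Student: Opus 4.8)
The plan is to obtain the corollary by repackaging Theorem~\ref{thm_adjoint} and Lemma~\ref{lem_kerneldescription} into the shape of a semi-orthogonal decomposition and producing the second adjoint pair $(j^*,j_*)$; no genuinely new argument is needed. First I would collect what is already available. By Theorem~\ref{thm_adjoint} the exact functor $\underline{\phi}\colon\sGproj RG\to\stmod RG$ is fully faithful with right adjoint $\underline{\psi}$, so its essential image is a thick subcategory of $\stmod RG$ whose inclusion admits a right adjoint. By Lemma~\ref{lem_kerneldescription} the right orthogonal $\sGproj RG^\perp$ equals $\ker\underline{\psi}=\stmod_\mathrm{fpd} RG$, and the composite $\stmod_\mathrm{fpd} RG\hookrightarrow\stmod RG\to\stmod RG/\sGproj RG$ is an equivalence of triangulated categories.

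Next I would define $j^*$ and $j_*$. Let $j^*$ be the Verdier quotient functor $\stmod RG\to\stmod RG/\sGproj RG$ followed by the inverse of the equivalence of Lemma~\ref{lem_kerneldescription}, so that $j^*$ takes values in $\stmod_\mathrm{fpd} RG$. Since the inclusion of $\sGproj RG$ has a right adjoint, standard Bousfield localization theory (see e.g.\ \cite{NeeCat}) shows that $j^*$ has a right adjoint $j_*$, which is fully faithful with essential image $\sGproj RG^\perp=\stmod_\mathrm{fpd} RG$; concretely $j_*$ simply regards an $RG$-module of finite projective dimension as an object of $\stmod RG$. This yields the two adjoint pairs appearing in the displayed diagram.

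It then remains to verify the two axioms of a semi-orthogonal decomposition. The semi-orthogonality is immediate from Lemma~\ref{lem_gproj_orthog}: for $A\in\Gproj RG$ and $Y\in\stmod_\mathrm{fpd} RG$ one has $\Hom_{\stmod RG}(\underline{\phi}A,j_*Y)=0$. For the gluing triangles, given $X\in\stmod RG$ I would complete the counit $\underline{\phi}\,\underline{\psi}X\to X$ to a triangle
\begin{displaymath}
\underline{\phi}\,\underline{\psi}X\to X\to C_X\to\S\underline{\phi}\,\underline{\psi}X.
\end{displaymath}
Since $\underline{\phi}$ is fully faithful the unit $\id\to\underline{\psi}\,\underline{\phi}$ is an isomorphism, so applying $\underline{\psi}$ to this triangle forces $\underline{\psi}C_X=0$; hence $C_X\in\ker\underline{\psi}=\stmod_\mathrm{fpd} RG$, and in fact $C_X\iso j_*j^*X$. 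Thus every object of $\stmod RG$ fits into a triangle with left-hand term in the image of $\underline{\phi}$ and right-hand term in the image of $j_*$, which is precisely the asserted decomposition.

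Finally I would note that there is no real obstacle here: the substantive content was already established — fully faithfulness and the existence of the right adjoint $\underline{\psi}$ in Theorem~\ref{thm_adjoint} (which rests on Proposition~\ref{prop_gproj_criterion}), and the identification of the kernel in Lemma~\ref{lem_kerneldescription}. The only points that require a little care are the bookkeeping with adjunctions — keeping straight which of $\underline{\phi},\underline{\psi}$, and of $j^*,j_*$, is the left and which the right adjoint — and the invocation of the standard fact that a thick subcategory whose inclusion has a right adjoint produces a fully faithful right adjoint to the Verdier quotient, with essential image the right orthogonal.
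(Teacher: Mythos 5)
Your proposal is correct and is precisely the repackaging the paper intends: the corollary is stated in the paper without proof, as a summary of Theorem~\ref{thm_adjoint}, Lemma~\ref{lem_kerneldescription}, and Lemma~\ref{lem_gproj_orthog} together with standard Verdier/Bousfield localization theory, which is exactly what you spell out.
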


\subsection{Compatibility with the tensor structure}

The category $\stmod RG$ is tensor triangulated. It is thus natural to ask if the localization sequence we have produced is compatible with this tensor structure. Compatibility with the inclusion of $\sGproj RG$ is easy based upon what we have shown so far.


\begin{cor}
The category $\sGproj RG$ is closed symmetric monoidal and the inclusion $\underline{\phi}$ is closed monoidal, i.e.\ $\sGproj RG$ is a tensor subcategory of $\stmod RG$.
\end{cor}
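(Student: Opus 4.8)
The plan is to use the criterion of Proposition~\ref{prop_gproj_criterion}, which reduces Gorenstein projectivity over $RG$ to projectivity of the underlying $R$-module, together with the fact (already recorded) that $-\otimes_R-$ and $\Hom_R(-,-)$ are exact on the $R$-split exact structure, so that they descend to $\stmod RG$. The only real content is to check that $\Gproj RG$, viewed inside $\bik RG$, contains the unit object and is closed under the tensor product and the internal hom; everything else will then be inherited from $\stmod RG$.

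First I would observe that the unit of the monoidal structure on $\stmod RG$, the trivial module $R$, is Gorenstein projective: its restriction $\iota_* R$ is free of rank one over $R$, hence projective, so Proposition~\ref{prop_gproj_criterion} applies. Next, given $A,B\in\Gproj RG$, note that $A$ and $B$ are finitely generated over $R$ since $RG$ is finite over $R$, whence $A\otimes_R B$ and $\Hom_R(A,B)$ are again finitely generated over $R$, and so over $RG$. The underlying $R$-module of $A\otimes_R B$ is $\iota_* A\otimes_R\iota_* B$, a tensor product of finitely generated projective $R$-modules, hence projective; by Proposition~\ref{prop_gproj_criterion}, $A\otimes_R B\in\Gproj RG$. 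For the internal hom, the underlying $R$-module of $\Hom_R(A,B)$ is $\Hom_R(\iota_* A,\iota_* B)$; since $\iota_* A$ is finitely generated projective over $R$, this is isomorphic to $\Hom_R(\iota_* A,R)\otimes_R\iota_* B$, again a tensor product of finitely generated projectives, hence projective, so $\Hom_R(A,B)\in\Gproj RG$.

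Having established closure, I would conclude as follows. Since $\Gproj RG$ is an exact Frobenius subcategory of $\bik RG$ (Lemma~\ref{lem_gproj_inclusion} and its corollary) closed under $-\otimes_R-$ and $\Hom_R(-,-)$, and these bifunctors are exact on $R$-split sequences, they restrict to exact bifunctors on $\Gproj RG$ and therefore descend to $\sGproj RG$. The unit $R$, the associativity, symmetry and unit constraints, and the tensor–hom adjunction are all simply the restrictions of the corresponding data on $\stmod RG$, so $\sGproj RG$ is closed symmetric monoidal and $\underline{\phi}$ preserves the tensor product, the unit and the internal hom on the nose; this exhibits $\sGproj RG$ as a tensor subcategory of $\stmod RG$. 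There is no serious obstacle here: the content is entirely in Proposition~\ref{prop_gproj_criterion}, and the only point requiring a little care is the internal hom, where one must use that $\iota_* A$ is finitely generated \emph{projective}, not merely projective, to rewrite $\Hom_R(\iota_* A,\iota_* B)$ as $\Hom_R(\iota_* A,R)\otimes_R\iota_* B$ — this is exactly where the noetherian hypothesis on $R$ and the finite generation of the modules are used.
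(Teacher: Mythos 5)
Your proposal is correct and follows essentially the same route as the paper: invoke Proposition~\ref{prop_gproj_criterion} to reduce Gorenstein projectivity of $R$, $A\otimes_R B$, and $\Hom_R(A,B)$ to projectivity of the underlying $R$-modules, then descend via exactness on $R$-split sequences. The paper's proof is terser and simply asserts the closure properties; your version usefully spells out why the underlying $R$-modules are projective (including the standard identification $\Hom_R(P,Q)\cong\Hom_R(P,R)\otimes_R Q$ for $P$ finitely generated projective).
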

\begin{proof}
The monoidal structure on $\sGproj RG$ is given by restricting $\otimes_R$ and $\Hom_R$, the closed monoidal structure given by the Hopf algebra structure on $RG$. This makes sense as by Proposition~\ref{prop_gproj_criterion} the trivial module $R$ is Gorenstein projective and if $M$ and $M'$ are Gorenstein projective then so are $M\otimes_R M'$ and $\Hom_R(M,M')$. Since $\underline{\phi}$ is induced by an honest inclusion it is clear it is closed monoidal, in fact strictly so. 
\end{proof}

Thus $\underline{\phi}$ induces a map of spectra in the sense of \cite{BaSpec}
\begin{displaymath}
\Spc(\underline{\phi})\colon \Spc(\stmod RG) \to \Spc(\sGproj RG).
\end{displaymath}

We now study this situation a little more closely. This is warranted as Balmer's theory provides powerful tools for studying tensor triangulated categories. Moreover, one expects $\sGproj RG$ to be easier to understand than $\stmod RG$ and so $\Spc\underline{\phi}$ should be a useful tool in trying to understand the spectrum of $\stmod RG$.

We refer to \cite{BaSpec} for the relevant definitions. Let us just recall that a thick subcategory $\mcI$ is a \emph{tensor ideal} in $\sGproj RG$ (resp.\ $\stmod RG$) if it is closed under tensoring with arbitrary objects of $\sGproj RG$ (resp.\ $\stmod RG)$ and a proper thick tensor ideal $\mcP$ is \emph{prime} if $M\otimes_R N \in \mcP$ implies that at least one of $M$ or $N$ lies in $\mcP$. 

\begin{lem}
The tensor triangulated category $\sGproj RG$ is rigid, i.e.\ setting $M^\vee = \Hom_R(M,R)$ the natural map
\begin{displaymath}
\gamma_{M,N}\colon M^\vee \otimes_R N \to \Hom_R(M, N)
\end{displaymath}
is an isomorphism for all $M$ and $N$ in $\sGproj RG$.
\end{lem}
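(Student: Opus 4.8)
The plan is to reduce the statement to the classical fact that a finitely generated projective module over a commutative ring is rigid, using Proposition~\ref{prop_gproj_criterion} to control the restriction of a Gorenstein projective $RG$-module to $R$.

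First I would record that, as in the preceding corollary, the closed symmetric monoidal structure on $\sGproj RG$ is computed by the honest functors $-\otimes_R-$ and $\Hom_R(-,-)$ on $RG$-modules, and that $M^\vee = \Hom_R(M,R)$ lies in $\Gproj RG$ since $R$ does. In particular the comparison map $\gamma_{M,N}$ is just the usual natural transformation of $RG$-modules underlying the closed structure, so it is $RG$-linear. Since a morphism of $RG$-modules is an isomorphism precisely when the underlying map of $R$-modules is bijective, it suffices to check that $\iota_*\gamma_{M,N}$ is an isomorphism of $R$-modules; once this is known, $\gamma_{M,N}$ is an isomorphism in $\Gproj RG$, hence in $\sGproj RG$.

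Next I would note that the formation of $\gamma_{M,N}$ is compatible with restriction along $\iota$, so on underlying $R$-modules it is the usual evaluation map
\begin{displaymath}
\Hom_R(\iota_*M, R)\otimes_R \iota_*N \to \Hom_R(\iota_*M, \iota_*N).
\end{displaymath}
By Proposition~\ref{prop_gproj_criterion} the module $\iota_*M$ is projective over $R$, and it is finitely generated because $RG$ is finite over $R$. The displayed map is an isomorphism whenever its source module is finitely generated projective: it holds for $\iota_*M = R$, both sides are additive in the first variable, and a finitely generated projective $R$-module is a direct summand of $R^n$ for some $n$. This completes the argument.

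There is no serious obstacle; the only points requiring attention are the identification of the tensor product and internal hom on $\sGproj RG$ with the honest $RG$-module operations, and the compatibility of $\gamma$ with $\iota_*$, both of which are immediate from the construction of the monoidal structure recalled in Section~\ref{sec_prelims}, together with Proposition~\ref{prop_gproj_criterion}.
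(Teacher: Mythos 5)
Your argument is correct and is exactly the paper's proof, spelled out in more detail: the paper also applies $\iota_*$, observes that the resulting map is the standard duality morphism for the finitely generated projective $R$-module $\iota_*M$ (using Proposition~\ref{prop_gproj_criterion}), and concludes by conservativity of $\iota_*$. The only thing you phrase differently is replacing the word ``conservative'' by the explicit observation that an $RG$-linear map is an isomorphism iff its underlying $R$-linear map is.
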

\begin{proof}
Applying the restriction functor $\iota_*$ to $\gamma_{M,N}$ yields an isomorphism of $R$-modules, in fact it is just the usual duality morphism for finitely generated projectives. Since $\iota_*$ is conservative we see $\gamma_{M,N}$ itself is an isomorphism.
\end{proof}

\begin{prop}\label{prop_projformula}
The adjunction $\underline{\phi} \dashv \underline{\psi}$ satisfies the projection formula: for all $M \in \sGproj RG$ and $X\in \stmod RG$ there is a natural isomorphism
\begin{displaymath}
M\otimes_R \underline{\psi}X \stackrel\sim\to \underline{\psi}(\underline{\phi}M\otimes_R X).
\end{displaymath}
\end{prop}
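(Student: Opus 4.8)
The plan is to deduce the projection formula by combining the rigidity of $\sGproj RG$ (the preceding lemma) with the adjunction $\underline{\phi}\dashv\underline{\psi}$ of Theorem~\ref{thm_adjoint} and the strict monoidality of the inclusion $\underline{\phi}$. First I would record the canonical candidate morphism: tensoring the counit $\varepsilon_X\colon\underline{\phi}\,\underline{\psi}X\to X$ with $M$, identifying $M\otimes_R\underline{\phi}\,\underline{\psi}X\iso\underline{\phi}(M\otimes_R\underline{\psi}X)$ via monoidality of $\underline{\phi}$, applying $\underline{\psi}$, and using $\underline{\psi}\,\underline{\phi}\iso\id$ (the remark after Theorem~\ref{thm_adjoint}) yields a natural morphism
\[
\theta_{M,X}\colon M\otimes_R\underline{\psi}X\longrightarrow\underline{\psi}(\underline{\phi}M\otimes_R X),
\]
and the content of the proposition is that $\theta_{M,X}$ is invertible.

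To see this I would argue by Yoneda. For a test object $A\in\sGproj RG$, note that $M^\vee\otimes_R A$ again lies in $\sGproj RG$ (this subcategory being closed under $\otimes_R$ and $\Hom_R$), and dualizability of $M$ with dual $M^\vee=\Hom_R(M,R)$ gives $\Hom_{\sGproj RG}(A,M\otimes_R\underline{\psi}X)\iso\Hom_{\sGproj RG}(M^\vee\otimes_R A,\underline{\psi}X)$. Applying $\underline{\phi}\dashv\underline{\psi}$ and then strict monoidality of $\underline{\phi}$ turns the right-hand side into $\Hom_{\stmod RG}(\underline{\phi}M^\vee\otimes_R\underline{\phi}A,X)$. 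Since a strong symmetric monoidal functor preserves dualizable objects and their duals, $\underline{\phi}M$ is dualizable in $\stmod RG$ with dual $\underline{\phi}M^\vee$, so this is in turn isomorphic to $\Hom_{\stmod RG}(\underline{\phi}A,\underline{\phi}M\otimes_R X)$, which by $\underline{\phi}\dashv\underline{\psi}$ once more equals $\Hom_{\sGproj RG}(A,\underline{\psi}(\underline{\phi}M\otimes_R X))$. All of these isomorphisms are natural in $A$ (and in $M$ and $X$), so the Yoneda lemma supplies a natural isomorphism $M\otimes_R\underline{\psi}X\iso\underline{\psi}(\underline{\phi}M\otimes_R X)$.

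The one point with genuine content is to check that the isomorphism just produced agrees with $\theta_{M,X}$; this is the usual compatibility yoga between the unit and counit of $\underline{\phi}\dashv\underline{\psi}$ and the evaluation/coevaluation maps witnessing dualizability, and is where I would be most careful. Alternatively one can avoid this bookkeeping and prove $\theta_{M,X}$ is an isomorphism head-on: by Corollary~\ref{cor_adjoint} there is a functorial triangle $\underline{\phi}\,\underline{\psi}X\xrightarrow{\varepsilon_X} X\to L\to\S\underline{\phi}\,\underline{\psi}X$ in which $L$ may be taken to be an $RG$-module of finite projective dimension; tensoring this triangle with $M$ and applying the exact functor $\underline{\psi}$ reduces the statement to $\underline{\psi}(M\otimes_R L)=0$, i.e.\ to $\pdim_{RG}(M\otimes_R L)<\infty$. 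But $M$ is $R$-projective by Proposition~\ref{prop_gproj_criterion}, so $M\otimes_R-$ is exact and sends $RG$-projectives to $RG$-projectives --- indeed $M\otimes_R RG\iso\iota^*\iota_*M$ is $RG$-projective because $\iota_*M$ is $R$-projective --- and hence carries a finite $RG$-projective resolution of $L$ to a finite $RG$-projective resolution of $M\otimes_R L$. Either way, $\theta_{M,X}$ is the desired natural isomorphism.
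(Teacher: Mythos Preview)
Your proposal is correct, and your first approach (the Yoneda argument via dualizability) is exactly the content of the reference the paper cites: the paper's own proof consists of a single sentence observing that the canonical map exists by formal nonsense and is an isomorphism by rigidity of $\sGproj RG$, referring to \cite{FauskHuMay}*{Proposition~3.12}. So on that route you have simply unpacked what the paper leaves to the literature.

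Your second, ``head-on'' argument is a genuinely different and more elementary route. Rather than invoking rigidity abstractly, you tensor the localisation triangle $\underline{\phi}\,\underline{\psi}X\to X\to L$ with $M$ and reduce to the concrete claim that $M\otimes_R L$ has finite projective dimension whenever $M$ is Gorenstein projective and $L$ has finite projective dimension; your justification via the tensor identity $M\otimes_R RG\cong \iota^*\iota_*M$ and $R$-projectivity of $M$ is correct. This buys you a self-contained proof that does not appeal to \cite{FauskHuMay}, and as a side effect gives an independent proof of Corollary~\ref{cor_action}, which in the paper is \emph{deduced} from the projection formula rather than used to prove it. The trade-off is that the paper's one-line citation highlights that the result is purely formal once rigidity is known, whereas your direct argument is specific to this situation but more transparent.
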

\begin{proof}
Such a natural morphism always exists by formal nonsense and is an isomorphism by rigidity of $\sGproj RG$, see for instance \cite{FauskHuMay}*{Proposition~3.12}.
\end{proof}

As an immediate and perhaps somewhat surprising corollary we observe the following.

\begin{cor}\label{cor_action}
Let $L$ be an $RG$-module of finite projective dimension and $M\in \GProj RG$. Then $M\otimes_R L$ has finite projective dimension as an $RG$-module. In particular $\sGproj RG$ acts on $\stmod_\mathrm{fpd} RG$.
\end{cor}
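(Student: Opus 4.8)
The plan is to obtain this essentially for free from the projection formula, Proposition~\ref{prop_projformula}, together with the identification $\ker\underline{\psi} = \stmod_\mathrm{fpd} RG$ of Lemma~\ref{lem_kerneldescription}. Assume first that $L$ is finitely generated, so that it determines an object of $\stmod RG$. Since $\pdim_{RG}L < \infty$ we have $L\in \stmod_\mathrm{fpd} RG = \ker\underline{\psi}$, so $\underline{\psi}L \iso 0$. Applying Proposition~\ref{prop_projformula} with $X = L$ then gives
\begin{displaymath}
\underline{\psi}(\underline{\phi}M\otimes_R L) \iso M\otimes_R \underline{\psi}L \iso 0,
\end{displaymath}
whence $\underline{\phi}M\otimes_R L \in \ker\underline{\psi} = \stmod_\mathrm{fpd} RG$.

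The only point needing attention is to translate this back into a statement about the honest module $M\otimes_R L$. The functor $\underline{\phi}$ is induced by the genuine inclusion $\Gproj RG \hookrightarrow \bik RG$ and the tensor product on $\stmod RG$ is the one induced by $-\otimes_R-$ on modules, so the object $\underline{\phi}M\otimes_R L$ of $\stmod RG$ is precisely the image of the $RG$-module $M\otimes_R L$. As already noted in the proof of Lemma~\ref{lem_kerneldescription}, a module of infinite projective dimension cannot become isomorphic in $\stmod RG$ to one of finite projective dimension; hence the fact that the image of $M\otimes_R L$ lies in $\stmod_\mathrm{fpd} RG$ forces $\pdim_{RG}(M\otimes_R L) < \infty$. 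The ``in particular'' clause is then only a reformulation: the tensor product of $\stmod RG$ restricts to a biexact functor $\sGproj RG \times \stmod_\mathrm{fpd} RG \to \stmod_\mathrm{fpd} RG$, with associativity and unit constraints inherited from $\stmod RG$, which is exactly an action of the tensor subcategory $\sGproj RG$ on $\stmod_\mathrm{fpd} RG$.

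For general $L$ and $M\in\GProj RG$, not necessarily finitely generated, one can instead argue directly without the projection formula. By Lemma~\ref{lem_gproj_pres} the restriction $\iota_*M$ is projective over $R$, so $M\otimes_R-$ is exact; tensoring a finite $RG$-projective resolution of $L$ with $M$ therefore produces a finite resolution of $M\otimes_R L$ whose terms are direct summands of modules of the form $M\otimes_R RG^{(I)}$. Using the standard Hopf-algebra identity $M\otimes_R RG \iso \iota^*\iota_*M$ (diagonal action against action on the second factor only) and Lemma~\ref{lem_gproj_pres} once more, each such term has projective dimension at most $d$ over $RG$, so $\pdim_{RG}(M\otimes_R L) \leq d + \pdim_{RG}L < \infty$. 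I do not expect a serious obstacle anywhere; the one place to be careful is the bookkeeping of the second paragraph, namely making sure that membership in $\ker\underline{\psi}$ really controls the projective dimension of the module itself and not merely of some isomorph in the stable category.
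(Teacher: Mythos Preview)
Your main argument (first two paragraphs) is exactly the paper's proof, just with the translation back to honest modules spelled out more carefully; the paper compresses your display and second paragraph into a single line invoking Lemma~\ref{lem_kerneldescription}. Your third paragraph goes beyond the paper: the corollary is stated for $M\in\GProj RG$ and arbitrary $L$, but the paper's argument via $\underline{\phi}$ and $\underline{\psi}$ tacitly requires both to be finitely generated, whereas your direct resolution argument covers the general case (and is in fact sharper, since $M\otimes_R RG\cong\iota^*\iota_*M$ is genuinely projective when $\iota_*M$ is $R$-projective, giving $\pdim_{RG}(M\otimes_R L)\leq\pdim_{RG}L$ rather than $d+\pdim_{RG}L$).
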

\begin{proof}
We just note that
\begin{displaymath}
\underline{\psi}(M\otimes_R L) = \underline{\psi}(\underline{\phi}M\otimes_R L)\cong M\otimes_R \underline{\psi}L \cong M\otimes_R 0 \cong 0,
\end{displaymath}
where we have used the projection formula and that $L$ is in the kernel of $\underline{\psi}$ by Lemma~\ref{lem_kerneldescription}. Again using Lemma~\ref{lem_kerneldescription} we deduce that $M\otimes_R L$ has finite projective dimension.
\end{proof}

The projection formula also allows us some level of control over the process of completing a thick tensor ideal of $\sGproj RG$ to a thick tensor ideal of $\stmod RG$. We now make this precise and as a consequence show $\Spc(\underline{\phi})$ is dense.

\begin{lem}\label{lem_control}
Let $\mcI$ be a thick tensor ideal of $\sGproj RG$ and let $\widetilde{\mcI}$ denote the thick tensor ideal of $\stmod RG$ generated by $\underline{\phi}\mcI$. Then we have an equality (up to closing under isomorphisms)
\begin{displaymath}
\underline{\psi}\widetilde{\mcI} = \mcI.
\end{displaymath}
\end{lem}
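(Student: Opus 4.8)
The plan is to prove the two inclusions separately.

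For $\mcI\subseteq\underline{\psi}\widetilde{\mcI}$ (up to isomorphism), note that $\underline{\phi}\mcI\subseteq\widetilde{\mcI}$ by construction, so $\underline{\psi}\,\underline{\phi}\mcI\subseteq\underline{\psi}\widetilde{\mcI}$; since $\underline{\psi}\,\underline{\phi}\iso\id_{\sGproj RG}$ (a consequence of $\underline{\phi}$ being fully faithful, cf.\ the remark following Theorem~\ref{thm_adjoint}), every object of $\mcI$ is isomorphic to one lying in $\underline{\psi}\widetilde{\mcI}$.

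For the reverse inclusion I would first record an explicit set of generators for $\widetilde{\mcI}$. In any tensor triangulated category the thick tensor ideal generated by a class $\mcS$ coincides with the thick subcategory generated by $\{S\otimes_R X \mid S\in\mcS,\ X\in\stmod RG\}$: this subcategory contains $\mcS$ because the $\otimes$-unit $R$ lies in $\stmod RG$ and $S\cong S\otimes_R R$, and it is a tensor ideal because for every $W$ the exact functor $-\otimes_R W$ carries the displayed generators into themselves, hence carries their thick closure into itself. Applying this with $\mcS=\underline{\phi}\mcI$ yields
\begin{displaymath}
\widetilde{\mcI}=\thick_{\stmod RG}(\underline{\phi}A\otimes_R X \mid A\in\mcI,\ X\in\stmod RG).
\end{displaymath}

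Now set $\mathcal{J}=\{X\in\stmod RG \mid \underline{\psi}X\in\mcI\}$, closed under isomorphism; being the preimage of the thick subcategory $\mcI$ under the exact functor $\underline{\psi}$, it is a thick subcategory of $\stmod RG$. It contains each generator $\underline{\phi}A\otimes_R X$ of $\widetilde{\mcI}$: by the projection formula (Proposition~\ref{prop_projformula}) one has $\underline{\psi}(\underline{\phi}A\otimes_R X)\iso A\otimes_R\underline{\psi}X$, and this lies in $\mcI$ since $\underline{\psi}X\in\sGproj RG$ and $\mcI$ is a tensor ideal containing $A$. Hence $\widetilde{\mcI}\subseteq\mathcal{J}$, i.e.\ $\underline{\psi}\widetilde{\mcI}\subseteq\mcI$, and together with the first inclusion this gives the claimed equality up to closing under isomorphisms. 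The only step that is not purely formal is the identification of the generators of $\widetilde{\mcI}$, and even that is a routine fact about thick tensor ideals; everything else follows from exactness of $\underline{\psi}$, the isomorphism $\underline{\psi}\,\underline{\phi}\iso\id$, and the projection formula, so I do not expect any genuine obstacle.
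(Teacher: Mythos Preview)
Your proof is correct and follows essentially the same route as the paper's: both prove the easy inclusion via $\underline{\psi}\,\underline{\phi}\cong\id$, identify $\widetilde{\mcI}$ as the thick closure of $\{\underline{\phi}A\otimes_R X\}$, and then use the projection formula together with $\mcI$ being an ideal to get the other inclusion. The only cosmetic differences are that you supply a direct argument for the generator description (the paper cites \cite{StevensonActions}*{Lemma~3.10}) and that you phrase the reverse inclusion via the preimage $\mathcal{J}=\underline{\psi}^{-1}(\mcI)$ rather than pushing forward directly; also, the isomorphism $\underline{\psi}\,\underline{\phi}\cong\id$ is actually recorded in the remark \emph{preceding} Theorem~\ref{thm_adjoint}, not following it.
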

\begin{proof}
We begin by observing that 
\begin{align*}
\widetilde{\mcI} &= \thick^\otimes_{\stmod RG}(\underline{\phi}\mcI) \\
&= \thick_{\stmod RG}(N\otimes \underline{\phi}M \; \vert \; N\in \stmod RG \; \text{and} \; M\in \mcI),
\end{align*}
where the second equality follows from (the thick subcategory version of) \cite{StevensonActions}*{Lemma~3.10}. Now we can use the projection formula to see that
\begin{align*}
\underline{\psi}\widetilde{\mcI} &= \underline{\psi}\thick_{\stmod RG}(N\otimes \underline{\phi}M \; \vert \; N\in \stmod RG \; \text{and} \; M\in \mcI) \\
&\subseteq \thick_{\stmod RG}(\underline{\psi}(N\otimes \underline{\phi}M) \; \vert \; N\in \stmod RG \; \text{and} \; M\in \mcI) \\
&= \thick_{\stmod RG}(\underline{\psi}N\otimes M \; \vert \; N\in \stmod RG \; \text{and} \; M\in \mcI) \\
&\subseteq \mcI,
\end{align*}
where the last containment uses that $\mcI$ is an ideal. This is in fact an equality because fully faithfulness of $\underline{\phi}$ gives
\begin{displaymath}
\mcI = \underline{\psi}\,\underline{\phi}\mcI \subseteq \underline{\psi}\widetilde{\mcI} \subseteq \mcI
\end{displaymath}
up to closing under isomorphisms.
\end{proof}

\begin{prop}\label{prop_denseapp}
The morphism $\Spc(\underline{\phi})\colon \Spc(\stmod RG) \to \Spc(\sGproj RG)$ induced by $\underline{\phi}$ is dense.
\end{prop}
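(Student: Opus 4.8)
The plan is to reduce the statement to Lemma~\ref{lem_control} via the tensor triangular geometry criterion for density recorded in the appendix. Recall the shape of that criterion: if $F\colon\mcK\to\mcL$ is an exact monoidal functor of essentially small tensor triangulated categories, then for $a\in\mcK$ the basic open subset $\{\mcR\in\Spc\mcK : a\in\mcR\}$ of $\Spc\mcK$ is nonempty precisely when $\langle a\rangle$, the thick $\otimes$-ideal generated by $a$, is proper, and a prime $\mcP\in\Spc\mcL$ maps into this open precisely when $F(a)\in\mcP$. Unwinding these observations, $\Spc(F)$ has dense image if and only if every proper thick $\otimes$-ideal $\mcI\subsetneq\mcK$ has the property that the thick $\otimes$-ideal of $\mcL$ generated by $F(\mcI)$ is again proper.

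I would then apply this with $F=\underline\phi$. So suppose $\mcI$ is a thick tensor ideal of $\sGproj RG$ whose image generates all of $\stmod RG$; in the notation of Lemma~\ref{lem_control} this says $\widetilde{\mcI}=\stmod RG$. Lemma~\ref{lem_control} then gives, up to closing under isomorphisms, $\mcI=\underline\psi\widetilde{\mcI}=\underline\psi(\stmod RG)$. Since $\underline\psi\,\underline\phi\cong\id_{\sGproj RG}$, the functor $\underline\psi$ is essentially surjective, so $\underline\psi(\stmod RG)$ is all of $\sGproj RG$ after closing under isomorphisms; as $\mcI$ is thick, hence already closed under isomorphisms, we conclude $\mcI=\sGproj RG$. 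By the criterion this shows $\Spc(\underline\phi)$ is dense.

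I do not expect a genuine obstacle here: once Lemma~\ref{lem_control} (and the projection formula of Proposition~\ref{prop_projformula} underlying it) and the appendix criterion are in hand, the argument is purely formal. The only points demanding a little care are the bookkeeping around ``up to closing under isomorphisms'' and, in establishing the criterion itself, the passage from arbitrary thick tensor ideals to finitely generated ones\textemdash{}which uses that the tensor unit is a compact object, so that if $F(\mcI)$ generates $\mcL$ then already $F(a_1\oplus\cdots\oplus a_n)$ does for some $a_1,\dots,a_n\in\mcI$.
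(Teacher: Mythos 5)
Your proof is correct and follows essentially the same route as the paper, combining the appendix density criterion (Proposition~\ref{prop_dense1}) with Lemma~\ref{lem_control} and the essential surjectivity of $\underline{\psi}$. The only cosmetic difference is that you state the criterion in terms of arbitrary thick tensor ideals rather than single objects, requiring the directed-union compactness argument you sketch at the end; the paper avoids this by working directly with the object-level statement of Proposition~\ref{prop_dense1}.
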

\begin{proof}
By Proposition~\ref{prop_dense1} it is enough to check that if $\underline{\phi}M$ generates $\stmod RG$ as a tensor ideal then $M$ generates $\sGproj RG$ as a tensor ideal. This follows immediately from Lemma~\ref{lem_control}: if the thick tensor ideal generated by $\underline{\phi}M$ is $\stmod RG$ then $\underline{\psi}\stmod RG = \sGproj RG$ must be contained in the thick tensor ideal $M$ generates in $\sGproj RG$.
\end{proof}

The right orthogonal $\stmod_\mathrm{fpd} RG$ does not seem to be compatible with the monoidal structure beyond the action of $\sGproj RG$ produced in Corollary~\ref{cor_action}. As the following example shows it is not even necessarily closed under taking tensor powers.

\begin{ex}
We let $R=\ZZ$ and $G=C_2$, the cyclic group of order $2$. Thus we are dealing with the hypersurface $\ZZ[x]/(x^2-1)$. Consider the module $M$ of finite projective dimension defined by the presentation
\begin{displaymath}
\xymatrix{
0 \ar[r] & \ZZ C_2 \ar[r]^-{x-3} & \ZZ C_2 \ar[r] & M \ar[r] & 0.
}
\end{displaymath}
More explicitly, $M$ is $\ZZ/8\ZZ$ as an abelian group with $x$ acting as multiplication by $3$. A straightforward computation using \cite{BIK8}*{Theorem~2.6(5)} reveals that $M$ is not weakly projective and so defines a non-zero object of $\stmod_\mathrm{fpd} \ZZ C_2$.

We now consider the module $N = M\otimes_\ZZ M$. As an abelian group this is again $\ZZ/8\ZZ$ but now with the trivial action of $x$. The non-split extension
\begin{displaymath}
\xymatrix{
0 \ar[r] & \ZZ \ar[r]^-8 & \ZZ \ar[r] & N \ar[r] & 0,
}
\end{displaymath}
where $\ZZ$ is the trivial module, implies $N$ has infinite projective dimension (and so in particular is not weakly projective) via a straightforward cohomology computation. 
\end{ex}

\appendix
\section{A brief aside on denseness in tensor triangular geometry}

This short appendix exists to serve Proposition~\ref{prop_denseapp}. Let $\mcK$ and $\mcL$ be essentially small tensor triangulated categories and $F\colon \mcK \to \mcL$ an exact monoidal functor. We will denote the tensor product in both $\mcK$ and $\mcL$ by $\otimes$, which we hope will not cause confusion as it should always be abundantly clear from the context in which category tensor products are being taken.

In \cite{BaSpec}*{Proposition~3.9} Balmer characterises the closure of the image of $\widetilde{F} = \Spc(F)$ in terms of objects of $\mcK$ whose images under $F$ generate $\mcL$ as a tensor ideal. We provide here some trivial arguments to extract, from Balmer's general result, a necessary and sufficient condition for $\widetilde{F}$ to be dense.

We will consider the following two sets (up to taking isomorphism classes) of objects of $\mcK$
\begin{displaymath}
\mathcal{S}_{\mcK} = \{a\in \mcK \; \vert \; \thick^\otimes(a) = \mcK\}
\end{displaymath}
and
\begin{displaymath}
\mathcal{S}_{\mcL} = \{a\in \mcK \; \vert \; \thick^\otimes(Fa) = \mcL\}
\end{displaymath}
i.e.\ the sets of objects in $\mcK$ which are supported everywhere and whose images are supported everywhere respectively. 

\begin{lem}\label{lem_dense_containments}
The sets defined above satisfy $\mathcal{S}_\mcK \subseteq \mathcal{S}_\mcL$.
\end{lem}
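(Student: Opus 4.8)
The plan is to combine the standard fact that the preimage of a thick tensor ideal under an exact monoidal functor is again a thick tensor ideal with the observation that a thick tensor ideal containing the unit object is the whole category.

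First I would fix $a \in \mathcal{S}_\mcK$, so that $\thick^\otimes(a) = \mcK$, and show that $F$ maps $\thick^\otimes(a)$ into $\thick^\otimes(Fa)$. To this end consider
\[
\mcC = \{ x \in \mcK \; \vert \; Fx \in \thick^\otimes(Fa) \}.
\]
Since $F$ is exact and $\thick^\otimes(Fa)$ is thick, $\mcC$ is a thick subcategory of $\mcK$; and since $F$ is monoidal we have $F(x \otimes y) \cong Fx \otimes Fy$, which lies in $\thick^\otimes(Fa)$ whenever $x \in \mcC$ and $y \in \mcK$, because $\thick^\otimes(Fa)$ is a tensor ideal. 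Thus $\mcC$ is a thick tensor ideal of $\mcK$ containing $a$, so $\mcK = \thick^\otimes(a) \subseteq \mcC$; equivalently $F\mcK \subseteq \thick^\otimes(Fa)$.

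Then I would conclude by feeding the unit object through: monoidality of $F$ gives $F\mathbf{1}_\mcK \cong \mathbf{1}_\mcL$, and the previous step shows $\mathbf{1}_\mcL \in \thick^\otimes(Fa)$. Since any $y \in \mcL$ satisfies $y \cong y \otimes \mathbf{1}_\mcL$, a thick tensor ideal of $\mcL$ containing $\mathbf{1}_\mcL$ must be all of $\mcL$, so $\thick^\otimes(Fa) = \mcL$ and $a \in \mathcal{S}_\mcL$. I do not anticipate any genuine obstacle here: the argument is entirely formal, and the only point to be careful about is that ``tensor ideal'' is taken in the strong sense of being closed under tensoring with arbitrary objects, which is precisely what makes both the preimage argument and the role of the unit as a generator work.
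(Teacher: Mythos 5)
Your proof is correct, and it takes a genuinely different route from the paper's. The paper deduces the containment from Balmer's support formalism: for $a\in\mathcal{S}_\mcK$ it computes $\supp_\mcL(Fa)=\widetilde{F}^{-1}(\supp_\mcK(a))=\widetilde{F}^{-1}(\Spc\mcK)=\Spc\mcL$ using \cite{BaSpec}*{Proposition~3.6}, then invokes \cite{BaSpec}*{Corollary~2.5} to convert ``supported everywhere'' into ``generates as a tensor ideal''. You instead argue purely ideal-theoretically, without any mention of $\Spc$ or $\supp$: you observe that $F^{-1}\bigl(\thick^\otimes(Fa)\bigr)$ is a thick tensor ideal of $\mcK$ containing $a$, hence all of $\mcK$ when $a\in\mathcal{S}_\mcK$, and then you play the unit through, noting that a tensor ideal containing $\mathbf{1}_\mcL\cong F\mathbf{1}_\mcK$ is the whole of $\mcL$. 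Your argument is more elementary and self-contained, requiring only that $F$ be a strong (not merely lax) monoidal exact functor so that $F\mathbf{1}_\mcK\cong\mathbf{1}_\mcL$ and $F(x\otimes y)\cong Fx\otimes Fy$; the paper's version is shorter given the cited machinery and sits naturally in an appendix that is already entirely couched in the language of supports and spectra. Both are fine; yours makes the lemma's independence from the geometric apparatus explicit.
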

\begin{proof}
This is an immediate consequence of \cite{BaSpec} Corollary 2.5 and Proposition 3.6. Indeed, for any $a\in \mathcal{S}_\mcK$
\begin{displaymath}
\supp_{\mcL}(Fa) = \widetilde{F}^{-1}(\supp_{\mcK}(a)) = \widetilde{F}^{-1}(\Spc \mcK) = \Spc\mcL
\end{displaymath}
so that we must have $\thick^\otimes(Fa) = \mcL$ showing $a\in \mathcal{S}_\mcL$.
\end{proof}

Recall from \cite{BaSpec}*{Proposition~3.9} that 
\begin{displaymath}
\overline{\im\widetilde{F}} = Z(\mathcal{S}_{\mcL}) = \{\mathcal{P}\in \Spc\mcK \; \vert \; \mathcal{P}\intersec\mathcal{S}_{\mcL} = \varnothing \}
\end{displaymath}
so that $\widetilde{F}$ is dense if and only if $\overline{\im\widetilde{F}} = \Spc\mcK$, if and only if 
\begin{equation}\label{eq_dense}
Z(\mathcal{S}_\mcL) = \Spc\mcK.
\end{equation}
Our condition boils down to the following trivial generalization of \cite{BaSpec}*{Corollary~2.5}.

\begin{lem}\label{lem_cor2.5_bis}
A set of objects $\mathcal{S} \cie \mcK$ satisfies $Z(\mathcal{S}) = \Spc\mcK$ if and only if every $a\in \mathcal{S}$ satisfies $\thick^\otimes(a) = \mcK$.
\end{lem}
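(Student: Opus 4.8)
The plan is to reduce the statement to the case $\mathcal{S}=\{a\}$ of a single object, which is precisely \cite{BaSpec}*{Corollary~2.5}, by rewriting $Z(\mathcal{S})$ as an intersection of supports.

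First I would unwind the definition of $Z$. For $\mathcal{P}\in\Spc\mcK$ the condition $\mathcal{P}\intersec\mathcal{S}=\varnothing$ says that no $a\in\mathcal{S}$ lies in $\mathcal{P}$, that is, $\mathcal{P}\in\supp(a)$ for every $a\in\mathcal{S}$. Equivalently, writing $Z(\{a\})=\supp(a)$ for the single-object case, one has
\begin{displaymath}
Z(\mathcal{S}) = \Intersec_{a\in\mathcal{S}} Z(\{a\}) = \Intersec_{a\in\mathcal{S}} \supp(a).
\end{displaymath}
Since each $\supp(a)$ is a subset of $\Spc\mcK$, this intersection is all of $\Spc\mcK$ if and only if $\supp(a)=\Spc\mcK$ for every $a\in\mathcal{S}$; when $\mathcal{S}=\varnothing$ both sides of the claimed equivalence hold vacuously, so this edge case causes no trouble.

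Finally I would invoke \cite{BaSpec}*{Corollary~2.5}, which for a fixed object $a$ asserts that $\supp(a)=\Spc\mcK$ if and only if $\thick^\otimes(a)=\mcK$. (There is no subtlety about radicals here: whether $\thick^\otimes(a)$ is radical or not is immaterial, since if its radical is all of $\mcK$ then the unit object already lies in $\thick^\otimes(a)$, forcing $\thick^\otimes(a)=\mcK$.) Stringing the three steps together gives the lemma. I do not expect any genuine obstacle: the mathematical content is entirely contained in Balmer's corollary, and the only thing to be done is the routine observation that $Z(\mathcal{S})=\Intersec_{a\in\mathcal{S}}\supp(a)$.
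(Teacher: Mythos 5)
Your proof is correct and takes essentially the same route as the paper: the paper phrases the step via $U(\mathcal{S})=\bigcup_{a\in\mathcal{S}}U(a)=\varnothing$ while you use the De Morgan dual $Z(\mathcal{S})=\bigcap_{a\in\mathcal{S}}\supp(a)=\Spc\mcK$, but in both cases the content reduces immediately to Balmer's Corollary 2.5 applied objectwise. Your parenthetical remark about radicals is a valid (if unnecessary) clarification and does not affect the argument.
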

\begin{proof}
Observe that $Z(\mathcal{S}) = \Spc\mcK$ if and only if $U(\mathcal{S}) = \Spc\mcK\setminus Z(\mathcal{S}) = \varnothing$. Now 
\begin{displaymath}
\varnothing = U(\mathcal{S}) = \Un_{a\in \mathcal{S}} U(a)
\end{displaymath}
if and only if for each $a\in \mathcal{S}$ we have $U(a) = \varnothing$ if and only if for each $a$ we have $\thick^\otimes(a) = \mcK$, where this last equivalence is \cite{BaSpec}*{Corollary~2.5}.
\end{proof}

So restating (\ref{eq_dense}) in terms of Lemma~\ref{lem_cor2.5_bis} we see $\widetilde{F}$ is dense if and only if $\mathcal{S}_\mcL \cie \mathcal{S}_\mcK$. Now applying Lemma \ref{lem_dense_containments} we have proved:

\begin{prop}\label{prop_dense1}
Given a tensor triangulated functor $F\colon \mcK \to \mcL$ the associated map on spectra $\widetilde{F}$ is dense if and only if 
\begin{displaymath}
\mathcal{S}_{\mcL} = \{a\in \mcK \; \vert \; \thick^\otimes(Fa) = \mcL\} \subseteq  \{a\in \mcK \; \vert \; \thick^\otimes(a) = \mcK\} = \mathcal{S}_{\mcK}
\end{displaymath}
and in this case $\mathcal{S}_\mcK = \mathcal{S}_\mcL$.
\end{prop}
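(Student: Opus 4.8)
The plan is to stitch together the three facts recalled in this appendix. Balmer's \cite{BaSpec}*{Proposition~3.9} identifies $\overline{\im\widetilde{F}}$ with $Z(\mathcal{S}_\mcL)$, so $\widetilde{F}$ is dense exactly when $Z(\mathcal{S}_\mcL) = \Spc\mcK$, which is precisely condition~(\ref{eq_dense}).

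First I would apply Lemma~\ref{lem_cor2.5_bis} with the set of objects taken to be $\mathcal{S} = \mathcal{S}_\mcL$: it tells us that $Z(\mathcal{S}_\mcL) = \Spc\mcK$ holds if and only if every $a \in \mathcal{S}_\mcL$ satisfies $\thick^\otimes(a) = \mcK$, i.e.\ if and only if $\mathcal{S}_\mcL \subseteq \mathcal{S}_\mcK$. Combining this with the previous sentence yields exactly the asserted biconditional: $\widetilde{F}$ is dense $\iff \mathcal{S}_\mcL \subseteq \mathcal{S}_\mcK$.

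For the final clause I would then invoke Lemma~\ref{lem_dense_containments}, which provides the opposite inclusion $\mathcal{S}_\mcK \subseteq \mathcal{S}_\mcL$ with no hypothesis on $F$. Hence as soon as $\widetilde{F}$ is dense both inclusions hold and $\mathcal{S}_\mcK = \mathcal{S}_\mcL$.

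I do not anticipate any real obstacle here: the mathematical content sits entirely in \cite{BaSpec}*{Corollary~2.5} and \cite{BaSpec}*{Proposition~3.9}, here repackaged as Lemmas~\ref{lem_cor2.5_bis} and~\ref{lem_dense_containments}, and the proposition is a purely formal consequence. The one point to keep an eye on is the bookkeeping ``up to isomorphism classes'' in the definitions of $\mathcal{S}_\mcK$ and $\mathcal{S}_\mcL$, but since both $Z(-)$ and the assignment $a \mapsto \thick^\otimes(a)$ depend only on the isomorphism class of $a$, the chain of equivalences goes through unchanged.
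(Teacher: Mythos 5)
Your proof is correct and follows exactly the same route as the paper: Balmer's Proposition~3.9 reduces denseness to $Z(\mathcal{S}_\mcL) = \Spc\mcK$, Lemma~\ref{lem_cor2.5_bis} translates this into $\mathcal{S}_\mcL \subseteq \mathcal{S}_\mcK$, and Lemma~\ref{lem_dense_containments} supplies the unconditional reverse inclusion to give equality. The remark about the definitions being stable under isomorphism is a harmless extra sanity check not spelled out in the paper but implicitly used there too.
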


\bibliography{greg_bib}

\end{document}